\documentclass{article}
\usepackage{hyperref}
\usepackage{verbatim}
\usepackage{amssymb}
\usepackage{amsthm}
\usepackage{amsmath}
\usepackage[dvipsnames]{xcolor}
\usepackage{tikz}
\usepackage{float}
\usetikzlibrary{fadings}
\usetikzlibrary{patterns}
\usepackage{biblatex}
\addbibresource{references.bib}

\DeclareFieldFormat
  [article,inbook,incollection,inproceedings,patent,thesis,unpublished]
  {title}{#1\isdot}

\title{Turns in Hamilton cycles of rectangular grids}
\author{Ethan Y. Tan\thanks{
                  Trinity College, Cambridge CB21TQ, UK. Email: \texttt{eyet2@cam.ac.uk}}
        \and
        Guowen Zhang\thanks{
                      \'{E}cole Normale Sup\'{e}rieure, 45 Rue d'Ulm, Cedex 05, Paris, 75230, France. Email: \texttt{wen00@live.com.au}
        }}
\usepackage{graphicx}

\theoremstyle{plain}

\newtheorem{theorem}{Theorem}[section]
\newtheorem{lemma}[theorem]{Lemma}

\newtheorem{corollary}[theorem]{Corollary}
\newtheorem{claim}[theorem]{Claim}

\theoremstyle{definition}

\newtheorem{question}{Question}[section]
\newtheorem*{acknowledgement}{Acknowledgements}

\begin{document}
\setlength{\parindent}{0pt}
\setlength{\parskip}{0.5em}

\newpage
\maketitle
\begin{abstract}
For a Hamilton cycle in a rectangular $m \times n$ grid, what is the greatest number of turns that can occur? We give the exact answer in several cases and an answer up to an additive error of $2$ in all other cases. In particular, we give a new proof of the result of Beluhov for the case of a square $n \times n$ grid. Our main method is a surprising link between the problem of `greatest number of turns' and the problem of `least number of turns'.
\end{abstract}

\section{Introduction}

The grid $G(m,n)$ is the graph on vertex set $\{1, \cdots, m\} \times \{1, \cdots, n\}$, with vertices $(a,b)$ and $(c,d)$ joined by an edge if and only if $|a-c| + |b-d| = 1$. In other words, $G(m,n)$ is the Cartesian product of a path on $m$ vertices and a path on $n$ vertices. As usual, a \textit{Hamilton cycle} is a cycle which visits every vertex. For convenience, we refer to vertices as \textit{cells}, with vertices represented as squares.

    \begin{figure}[H]
        \centering
        \begin{tikzpicture}
\draw[step=1cm,gray,dashed] (0,0) grid (6,4);
\draw [black, thin] (0, 0) -- (6, 0) -- (6, 4) -- (0, 4) -- cycle;
\draw [gray,thin] (0.5, 0.5) circle (0.4);
\draw [gray,thin] (3.5, 0.5) circle (0.4);
\draw [gray,thin] (4.5, 0.5) circle (0.4);
\draw [gray,thin] (5.5, 0.5) circle (0.4);
\draw [gray,thin] (1.5, 1.5) circle (0.4);
\draw [gray,thin] (2.5, 1.5) circle (0.4);
\draw [gray,thin] (3.5, 2.5) circle (0.4);
\draw [gray,thin] (4.5, 2.5) circle (0.4);
\draw [gray,thin] (0.5, 3.5) circle (0.4);
\draw [gray,thin] (1.5, 3.5) circle (0.4);
\draw [gray,thin] (2.5, 3.5) circle (0.4);
\draw [gray,thin] (5.5, 3.5) circle (0.4);
\draw [black, thick] (0.5, 0.5) -- (1.5, 0.5);
\draw [black, thick] (1.5, 0.5) -- (2.5, 0.5);
\draw [black, thick] (2.5, 0.5) -- (3.5, 0.5);
\draw [black, thick] (4.5, 0.5) -- (5.5, 0.5);
\draw [black, thick] (1.5, 1.5) -- (2.5, 1.5);
\draw [black, thick] (3.5, 2.5) -- (4.5, 2.5);
\draw [black, thick] (0.5, 3.5) -- (1.5, 3.5);
\draw [black, thick] (2.5, 3.5) -- (3.5, 3.5);
\draw [black, thick] (3.5, 3.5) -- (4.5, 3.5);
\draw [black, thick] (4.5, 3.5) -- (5.5, 3.5);
\draw [black, thick] (0.5, 0.5) -- (0.5, 1.5);
\draw [black, thick] (3.5, 0.5) -- (3.5, 1.5);
\draw [black, thick] (4.5, 0.5) -- (4.5, 1.5);
\draw [black, thick] (5.5, 0.5) -- (5.5, 1.5);
\draw [black, thick] (0.5, 1.5) -- (0.5, 2.5);
\draw [black, thick] (1.5, 1.5) -- (1.5, 2.5);
\draw [black, thick] (2.5, 1.5) -- (2.5, 2.5);
\draw [black, thick] (3.5, 1.5) -- (3.5, 2.5);
\draw [black, thick] (4.5, 1.5) -- (4.5, 2.5);
\draw [black, thick] (5.5, 1.5) -- (5.5, 2.5);
\draw [black, thick] (0.5, 2.5) -- (0.5, 3.5);
\draw [black, thick] (1.5, 2.5) -- (1.5, 3.5);
\draw [black, thick] (2.5, 2.5) -- (2.5, 3.5);
\draw [black, thick] (5.5, 2.5) -- (5.5, 3.5);
        \end{tikzpicture}
        \caption{A Hamilton cycle of the $6 \times 4$ grid $G(6,4)$. It has $12$ turns, marked with circles.}
        \label{examplegrid}
    \end{figure}

For a given cycle, a {\it turn} is a vertex at which the cycle turns $90$ degrees.

In this paper, we will study the following problem: over all Hamilton cycles of an $m \times n$ grid, what is the  maximum number of turns that can be attained? Of course, $mn$ must be even for such a cycle to exist; we will assume that $mn$ is even from here on. The case where $m$ or $n$ is $2$ is trivial.

The earliest reference to the minimum turns variant of this problem goes back to Loyd \cite{loyd}, who asked the problem of minimum turns on an $8 \times 8$ chessboard, and provided an example of a minimal cycle. Jelliss \cite{jelliss} outlined a proof that the minimum number of turns on $G(m,n)$ where $m \geq n$ is $2m$ if $n$ is odd and $2n$ otherwise.

The earliest reference known to the problem of maximum turns in a rectangle is Gik's book \cite{gik}, which poses the problem on an $8 \times 8$ chessboard, and gives a construction with $56$ turns without proof; the case for squares of any size was solved by Beluhov \cite{beluhov}, in the following theorem:

\begin{theorem}[Beluhov \cite{beluhov}]\label{square beluhov}
The maximum number of turns in a Hamilton cycle on $G(n,n)$ with $n>2$ is $n^2-n$ if $4|n$, and $n^2-n-2$ otherwise.
\end{theorem}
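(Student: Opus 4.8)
The plan is to recast the problem from maximizing turns to minimizing straight-through cells, and then to pin down the least possible number of straights using the quoted minimum-turns result together with a parity constraint.

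First I would set up the reduction. In any Hamilton cycle, the used horizontal edges break into maximal horizontal segments (the maximal runs inside each row), and likewise the vertical edges into vertical segments; write $h$ and $v$ for their numbers. A cell is a \emph{turn} exactly when it is an endpoint of one horizontal and one vertical segment, so each turn is counted once by the $2h$ horizontal endpoints and once by the $2v$ vertical endpoints, forcing $h=v$ and showing the number of turns equals $h+v$. Equivalently, if $S$ is the number of straight-through cells then $\text{turns}=n^2-S$, so maximizing turns is the same as minimizing $S$. The crucial degeneracy to record is that if $S=0$ then every cell is a turn, so the horizontal edges used in each row form the \emph{unique} perfect matching $\{12,34,\dots\}$ of the path $P_n$, and similarly in each column; their union is $(n/2)^2$ disjoint $2\times 2$ cycles, impossible for $n>2$. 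Thus straights are forced purely by connectivity, and the entire content of the theorem is to quantify how many.

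The upper bound is where I would invoke the promised link to the minimum-turns problem. The goal is a transformation $C\mapsto C'$ sending a Hamilton cycle to another Hamilton cycle with $\mathrm{turns}(C)+\mathrm{turns}(C')\le n(n+1)$ (equivalently $\mathrm{straights}(C)+\mathrm{straights}(C')\ge n(n-1)$), presumably realized by a local reflection of each straight-segment/corner that trades turns for straights while preserving Hamiltonicity. Granting this, Jelliss's theorem gives $\mathrm{turns}(C')\ge 2n$, and hence $\mathrm{turns}(C)\le n(n+1)-\mathrm{turns}(C')\le n(n+1)-2n=n^2-n$. I expect defining this transformation cleanly, checking that it is well defined and stays Hamiltonian, and verifying the exact additive constant to be the main obstacle; note in particular that the constant $n(n+1)$ is $\equiv 2\pmod 4$ when $n\equiv 2\pmod 4$, so the link cannot be an exact constant-sum bijection there (it must be only an inequality), which is precisely why a separate parity input is needed.

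That parity input is the next step: I would prove $S\equiv 0\pmod 4$ for every Hamilton cycle on $G(n,n)$ with $n$ even (equivalently $\mathrm{turns}\equiv 0\pmod 4$), via a signed count of the $\pm 90^\circ$ exterior angles refined by the checkerboard coloring. Combined with $S\ge n$ this finishes both cases: when $4\mid n$, $S=n$ is admissible, giving $\mathrm{turns}=n^2-n$; when $n\equiv 2\pmod 4$ we have $n\equiv 2\pmod 4$, so $S=n$ is forbidden and the least admissible value is $n+2$, giving $\mathrm{turns}=n^2-n-2$. Finally I would supply matching constructions: stack $n/2$ horizontal $2\times n$ strips, each carrying an all-turns Hamiltonian path whose two endpoints lie on its inner long side, and stitch consecutive strips with single vertical edges; each stitch turns its two endpoint-cells into straights, so $n/2$ stitches arranged to close into one cycle cost exactly $n$ straights and realize $n^2-n$ when $4\mid n$ (I have checked the $n=4$ case by hand: two strips, four straights, twelve turns). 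When $n\equiv 2\pmod 4$ the odd number of strips creates a parity clash on closing up, costing one extra pair of straights for the value $n^2-n-2$; equivalently, the extremal cycle should be recoverable by running the Paragraph-two transformation backwards on the minimal snake.
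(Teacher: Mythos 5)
Your reduction from maximizing turns to minimizing straights is fine, but each of the three pillars you then build on has a genuine gap. The most serious is the upper bound: the transformation $C\mapsto C'$ with $\mathrm{turns}(C)+\mathrm{turns}(C')\le n(n+1)$ is never defined, and its existence is not a deferrable technicality. Given the minimum-turns theorem, the statement ``for every Hamilton cycle $C$ there exists a Hamilton cycle $C'$ with $\mathrm{turns}(C)+\mathrm{turns}(C')\le n(n+1)$'' is \emph{equivalent} to the bound $\mathrm{turns}(C)\le n^2-n$ you are trying to prove (if the bound holds, take $C'$ to be any minimal cycle; conversely your inequality plus $\mathrm{turns}(C')\ge 2n$ gives the bound), so the proposal is circular at its core step. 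There is also a reason no simple local reflection trading turns for straights can work: such moves disconnect the cycle or double-cover cells. This is exactly why the paper's link between maximum and minimum is not a same-grid transformation but a grid-halving correspondence: one takes the symmetric difference of $C$ with the square overlay of $2\times2$ cycles and contracts to the ``half-form'', a union of cycles covering every cell of $G\left(\frac n2,\frac n2\right)$, each of whose turns forces a straight of $C$; the bound of $n$ straights then comes from Corollary \ref{2turns} applied to this cycle-union --- crucially \emph{not} from Theorem \ref{minturns}, which is about Hamilton cycles and does not apply, since the half-form may revisit cells and traverse segments twice.

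The other two pillars also fail. Your parity lemma ($S\equiv 0\pmod 4$ for $n$ even) cannot be proved by the route you suggest and is almost certainly false: a signed exterior-angle count refined by the checkerboard colouring is insensitive to the dimensions of the rectangle, so it would prove the same statement for every even-by-even grid; but the paper exhibits a Hamilton cycle of $G(10,6)$ with exactly $10$ straights, hence $50\equiv 2\pmod 4$ turns (Figure \ref{10x6ub}), and its concluding conjecture asserts that intermediate values $\equiv 2\pmod 4$ occur on even squares as well. The actual proof that $n$ straights are impossible for $n\equiv 2\pmod 4$ (Lemma \ref{oddbound}) is a connectivity argument, not a parity one: with only $n$ straights the half-form would have exactly two turns in every row and column, forcing a cycle around the outer ring of $G\left(\frac n2,\frac n2\right)$ and hence disconnecting the original cycle. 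Finally, your construction does not close up: if you stack $k=n/2$ strips whose all-turn zigzag paths each have both endpoints on a long side, the two stitches at the bottom strip must both attach to the second strip's path endpoints, exhausting them, so the strips pair off into disjoint cycles; the construction produces a single Hamilton cycle only for $k\le 2$, i.e.\ $n=4$, the one case you checked. For $n\ge 8$, and in the whole case $n\equiv 2\pmod 4$, you are left with no construction; the paper instead builds the extremal cycles by running its overlay correspondence backwards from a minimum-turn cycle (respectively, a spiral plus a doubled segment) on the half grid.
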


The methods used in \cite{beluhov} are tailored to square grids, and do not apply for general rectangular grids. Our main aim in this paper is to give an answer for a general rectangular grid, to within an additive error of $2$:

\begin{theorem}\label{final result}
The maximum number of turns possible over all Hamilton cycles on $G(m,n)$ with $m > n > 2$ is
$$f_{\max}(m,n) = \begin{cases}mn - n & \textnormal{if } n \equiv 0 \pmod 4 \\ mn - m & \textnormal{if } n \equiv 1 \textnormal{ or }3 \pmod 4 \\ mn-g(m,n) \textnormal{ or } mn-g(m,n)-2 & \textnormal{if } n \equiv 2 \pmod 4\end{cases}$$
where $g$ is given by
$$g(m,n) = 2\left\lfloor \frac{2m+n+2}{6}\right\rfloor.$$
\end{theorem}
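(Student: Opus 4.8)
The plan is to pass to the complementary quantity. A Hamilton cycle on $G(m,n)$ uses exactly $mn$ edges and meets every cell, so each cell is either a \emph{turn} or a \emph{straight}, and maximising turns is the same as minimising the number of straights $S = S_h + S_v$, where $S_h$ counts cells both of whose cycle-edges are horizontal and $S_v$ counts those both of whose edges are vertical. I would first record the cheap structural facts. Every maximal horizontal (resp.\ vertical) run of the cycle terminates in two turns, so the number of turns in any fixed row or column is even; moreover the top and bottom rows carry no vertical straights, and the leftmost and rightmost columns carry no horizontal straights. If $R$ denotes the number of maximal horizontal runs then the number of turns is exactly $2R$, so the problem can equivalently be read as splitting the cycle into as many short runs as possible, subject to these parity and boundary constraints. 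The target values $mn-n$, $mn-m$, and $mn-g(m,n)$ then become the claims $S\ge n$ (when $4\mid n$), $S\ge m$ (when $n$ is odd, so $m$ is even), and $S\ge g(m,n)$ (when $n\equiv 2\pmod 4$).

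The heart of the argument, and the step I expect to be the main obstacle, is the link to the minimum-turns problem advertised in the abstract. The idea is to show that a turn-maximal cycle cannot have too few straights by transforming its straight-structure into a configuration governed by a minimum-turns bound. Concretely, I would view the cycle as the boundary of the region $R^\ast$ of unit faces that it encloses: turns of the cycle correspond exactly to corners of $R^\ast$, while straights correspond to flat boundary cells. I would then look for an operation — a diagonal reinterpretation, or a pairing of straight cells into long monochromatic segments — under which the straights of our cycle play the role of \emph{turns} of an auxiliary Hamilton cycle on a related grid, so that the minimum-turns count of $2m$ ($n$ odd) or $2n$ ($n$ even) outlined by Jelliss \cite{jelliss} transports into the required lower bound on $S$. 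The delicate point is controlling the additive constant and the dependence on $n\bmod 4$: the parity of $n/2$ decides whether the crossing numbers of all the horizontal cut-lines can be simultaneously extremal, and this is precisely what separates the clean cases $n\equiv 0,1,3$ from the stubborn case $n\equiv 2$.

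For the matching lower bounds on the number of turns I would exhibit explicit families of Hamilton cycles, organised by $n\bmod 4$. The basic building block is a comb / brick-wall pattern in which nearly every cell is a turn, with straights confined to a small number of seam rows or columns whose sole purpose is to stitch the combs into one cycle. When $4\mid n$ I expect a construction using a single straight column's worth of straights, i.e.\ exactly $n$ of them, realising $mn-n$ turns; when $n$ is odd I expect a construction with $m$ straights realising $mn-m$ turns. In each case the work is to verify that the resulting object is a \emph{single} Hamilton cycle rather than a disjoint union of cycles, which is where the seams must be placed carefully and where the parity of $m$ and $n$ re-enters. This recovers Beluhov's square result \cite{beluhov} as the specialisation along $m=n$.

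The residue $n\equiv 2\pmod 4$ is where I anticipate only pinning the answer down to within an additive $2$. Here the comb constructions cannot meet the naive parity lower bound, and the true extremum is the packing-type quantity $g(m,n)=2\lfloor (2m+n+2)/6\rfloor$; the shape of $g$, essentially $(2m+n)/3$, suggests that the unavoidable straights must be distributed so that each one absorbs a bounded amount of the two long sides and one short side of the rectangle, which is the source of the factor $1/3$. The plan is to prove $S\ge g(m,n)$ through the link of the second paragraph and to construct cycles attaining either $g(m,n)$ or $g(m,n)+2$ straights. I do not expect to close the residual gap of $2$ uniformly: it reflects a genuine parity obstruction that behaves differently across subfamilies, and this is exactly why the theorem is stated only up to an additive error of $2$.
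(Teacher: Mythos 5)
Your overall frame (minimise straights, link the problem to minimum turns, then give explicit constructions) is indeed the paper's, but at the two places where the real work happens your plan either leaves the mechanism unspecified or commits to a step that is provably wrong. First, the transport of the minimum-turns bound is never pinned down: ``a diagonal reinterpretation, or a pairing of straight cells'' is not an operation. The paper's mechanism is concrete and reversible: tile $G(m,n)$ (both sides even) by disjoint $2\times 2$ cycles (the \emph{square overlay}), take the symmetric difference of the Hamilton cycle with this overlay to obtain a union of cycles (the \emph{reduced form}), and contract each $2\times2$ block to a single cell to obtain a closed multi-walk (the \emph{half-form}) on $G\left(\frac m2,\frac n2\right)$ covering every cell; each turn of the half-form forces a straight of the original cycle, and the two-turns-per-row-or-column bound applied to the half-form yields $S\ge n$ when $4\mid n$. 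Without some such explicit operation, your lower bound $S\ge n$ in the $4\mid n$ case has no proof: your ``cheap structural facts'' cannot supply it, because when $m$ and $n$ are both even the parity constraints force no straight in any row or column at all.

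Second, and more seriously, your plan for $n\equiv 2\pmod 4$ --- to get $S\ge g(m,n)$ ``through the link of the second paragraph'' --- cannot work. The half-form is not a Hamilton cycle: it is a union of closed walks that may traverse a segment twice, so the minimum-turns bound for grids with an odd side (which is exactly what $G\left(\frac m2,\frac n2\right)$ has when $n\equiv2\pmod4$) does \emph{not} transfer; the paper warns about precisely this failure. Moreover, no transported bound could produce $g(m,n)$ anyway: a transfer of the $2m$-type minimum (odd short side) would give $S\ge m$, which is false for $m$ large, since cycles with $g(m,n)+2\approx\frac{2m+n}{3}+O(1)<m$ straights exist. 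The paper's lower bound in this residue class comes from a different, direct argument: any three consecutive columns contain a straight (Lemma \ref{3col}), together with a boundary analysis near the first all-turn pair of columns (Lemma \ref{leftandrightbounds}), followed by minimising $\max\left(2a,\frac n2+1\right)+\max\left(2b,\frac n2+1\right)+2\left\lfloor\frac{m-2a-2b}{3}\right\rfloor$ over $a,b$; that optimisation, not any packing heuristic, is where the factor $\frac13$ in $g$ comes from. The matching constructions also need machinery you do not have (extensible cycles and unfolding lines, Lemmas \ref{extensionlemma} and \ref{6extensible}, applied to base cases on $G(m,6)$ for each residue of $m-n$ modulo $6$). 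Your proposal is missing this entire argument, and it is the heart of the theorem in the only genuinely hard case.
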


We begin with some preliminary observations in section \ref{prelim}. We then solve the problem of the minimum number of turns in a grid in section \ref{min}. This least turns problem is significantly easier than the problem of maximum turns. In section \ref{max}, we prove Theorem \ref{square beluhov}, using the solution for the minimum turns case as the basis of a proof. Finally, we generalise our method to general rectangles in section \ref{rect}, obtaining an exact answer for many cases and an answer within an additive error of $2$ for all other cases.


\section{Preliminary results}\label{prelim}
We collect here some simple facts that we will need later.

\begin{lemma} \label{evenrowcol}
For any cycle $C$ on a grid, each row and column has an even number of turns.
\end{lemma}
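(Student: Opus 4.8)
The plan is to prove that on any cycle $C$ in the grid, each individual row contains an even number of turns, and similarly each column (the two cases being identical by symmetry, so I would argue only for rows). Consider a fixed row $R$, say the cells with second coordinate equal to some $b$. The cycle $C$ uses certain vertical edges leaving $R$ upward (to the row above) and certain vertical edges leaving $R$ downward (to the row below). A cell of $R$ is a turn precisely when the cycle enters it along a horizontal edge and leaves along a vertical edge (or vice versa), i.e.\ when exactly one of the two edges of $C$ at that cell is vertical. The key observation I would isolate is that at any cell, $C$ uses exactly two edges, so the number of vertical edges incident to that cell is $0$, $1$, or $2$, and a turn in $R$ is exactly a cell with vertical-degree $1$.

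First I would count, over all cells of $R$, the total number of vertical edges of $C$ incident to $R$; call this sum $V$. Since each such vertical edge is incident to exactly one cell of $R$, $V$ equals the number of vertical edges of $C$ crossing out of $R$ (counting both the upward and downward edges). Now I would partition the cells of $R$ by their vertical-degree: let $a_0, a_1, a_2$ be the number of cells of $R$ with $0$, $1$, $2$ incident vertical edges respectively. Then $V = a_1 + 2a_2$, and the number of turns in $R$ is exactly $a_1$. Hence the parity of the turn count equals the parity of $V$.

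The crux is therefore to show $V$ is even. For this I would use a cut/crossing argument: consider the horizontal line separating row $b$ from row $b+1$. The cycle $C$, being a closed curve, must cross any such separating line an even number of times, because each crossing corresponds to a vertical edge and a closed curve crosses a line an even number of times (it returns to where it started). More carefully, the vertical edges of $C$ between row $b$ and row $b+1$ form the ``cut'' between the vertex set of rows $\le b$ and rows $> b$; since $C$ is a cycle (an even closed walk alternating sides at each crossing), this cut contains an even number of edges. The same holds for the line between row $b-1$ and row $b$. Adding the upward count and the downward count, each even, gives that $V$ is even, so $a_1$ is even, as required.

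The main obstacle I expect is making the ``closed curve crosses a line an even number of times'' claim fully rigorous in the combinatorial setting, rather than appealing to topological intuition. I would handle this by the clean edge-cut parity argument: for any partition of the vertices of a graph into two parts, every cycle uses an even number of edges crossing the partition, since traversing the cycle one returns to the starting side and each crossing edge switches sides exactly once. Applying this to the partition (rows $\le b$) versus (rows $> b$) yields evenness of the downward vertical edges at $R$, and the analogous partition handles the upward edges; their sum $V$ is then even, completing the proof.
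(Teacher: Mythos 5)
Your proof is correct, but it takes a genuinely different route from the paper. The paper's argument is a traversal-and-pairing one: it orients the cycle, walks along it starting outside the row $R$, labels each turn in $R$ as one where the cycle \emph{enters} the row (vertical edge then horizontal) or \emph{exits} it (horizontal then vertical), and observes that these two kinds of turns alternate and hence pair up, giving evenness. You instead make a static count: a turn in $R$ is exactly a cell of vertical-degree $1$, so the number of turns is congruent mod $2$ to the total number $V$ of vertical cycle-edges incident to $R$, and $V$ is even because it splits as the sum of two edge cuts (edges from row $b$ up to row $b+1$, and edges from row $b$ down to row $b-1$), each of which any cycle crosses an even number of times. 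Your edge-cut formulation avoids the orientation and traversal bookkeeping entirely, and it has the bonus of applying verbatim to any even subgraph, i.e.\ to disjoint unions of cycles --- a setting the paper actually needs later (for reduced forms and half-forms, where it remarks that such parity facts ``hold for any union of cycles''), whereas the paper's pairing argument must there be applied cycle by cycle. The paper's proof, in exchange, is shorter and more pictorial, making the enter/exit structure of the cycle within a row explicit, which foreshadows the similar alternation argument in its Lemma on rows of odd length.
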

\begin{proof}
We prove the result for rows; columns follow analogously.

Orient the cycle clockwise, and choose some row $R$. Start from a cell not in $R$, and travel along the entire cycle once. Label every turn in $R$ by whether the cycle enters or exits the row at that point. Every turn where the cycle enters must be followed by a turn where the cycle exits, and similarly, every turn where a cycle exits has an associated turn where the cycle last entered.

Hence, turns where the cycle enters can be paired with turns where the cycle exits, and thus the number of turns in $R$ is even.
\end{proof}

\begin{lemma}\label{turnexistence}
For a cycle $C$, and a cell $c$ in this cycle, there is at least one turn in the same row or column as $c$.
\end{lemma}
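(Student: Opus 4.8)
The plan is to reduce to the single cell $c$ and then ``walk along the cycle in a straight line'' until forced to turn. Let $R$ be the row and $K$ the column containing $c$. If $c$ is itself a turn we are immediately done, since then $R$ contains a turn; so assume $c$ is not a turn. As $C$ is a cycle, $c$ has degree $2$, and since it is not a turn its two incident edges are collinear --- either both horizontal or both vertical. By the symmetry between rows and columns we may suppose both are horizontal, so that the cycle passes straight through $c$ inside row $R$.

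Starting from $c$, I would follow the cycle rightward within $R$. The point is that each newly reached cell arrives with an incoming horizontal edge; if that cell is not a turn then, being straight, its other edge is horizontal too, and the walk continues one cell further to the right without ever leaving $R$. Since $C$ is a simple cycle it repeats no vertex, and $R$ has only finitely many cells, so this walk cannot continue indefinitely (at the very latest it reaches the right boundary of the grid, where a further horizontal step is impossible). Hence it must eventually meet a cell that is not horizontal-straight; carrying an incoming horizontal edge, such a cell can only be a turn, and it lies in $R$. The vertical case is identical, producing a turn in $K$.

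As this is essentially an elementary finiteness argument, I do not anticipate a genuine obstacle. The only step requiring care is the classification: verifying that a degree-$2$ non-turn vertex really does have collinear incident edges, and consequently that any vertex incident to a horizontal edge which is not horizontal-straight must be a turn. Once that dichotomy is pinned down, the straight-line walk together with finiteness closes the proof.
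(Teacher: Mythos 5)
Your proof is correct and is essentially the paper's own argument: the paper's one-line proof (``follow the cycle as it leaves $c$; wherever it goes next, it must turn at some point'') is precisely your straight-line walk terminated by finiteness of the grid, which you have simply spelled out in more detail, including the explicit dichotomy that a non-turn cell passes the cycle straight through.
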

\begin{proof}
Follow the cycle as it leaves $c$; wherever it goes next, it must turn at some point.
\end{proof}

Combining Lemmas \ref{evenrowcol} and \ref{turnexistence}, we have the following corollary.
\begin{corollary}\label{2turns}
For any cell $c$ in a cycle $C$, there are at least two turns in the union of the row and column containing $c$. \qed
\end{corollary}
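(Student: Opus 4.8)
The plan is to combine the two lemmas directly, using parity to upgrade the single turn guaranteed by Lemma \ref{turnexistence} into a pair. First I would invoke Lemma \ref{turnexistence} to produce at least one turn $t$ lying in the row or column that contains $c$; by the symmetry between rows and columns I may assume that $t$ lies in the row $R$ through $c$.

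Next I would appeal to Lemma \ref{evenrowcol}, which tells me that the number of turns in $R$ is even. Since $R$ already contains the turn $t$, the number of turns in $R$ is a positive even integer, and hence is at least $2$. Both of these turns lie in $R$, and $R$ is contained in the union of the row and column through $c$, so this union contains at least two turns, as required.

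I do not expect a genuine obstacle here, as this is a short deduction rather than a substantial argument. The only point that needs a little care is that Lemma \ref{turnexistence} guarantees a turn in the row \emph{or} the column, not in a prescribed one of the two; so the parity argument of Lemma \ref{evenrowcol} must be applied to whichever of the two grid lines actually contains the turn $t$, and the conclusion then lands in the union of the two regardless. The essential idea is simply that a single turn on a grid line forces a second turn on that same line by the even-turn parity established in Lemma \ref{evenrowcol}.
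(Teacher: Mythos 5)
Your proof is correct and is exactly the argument the paper intends: the paper states the corollary as an immediate combination of Lemmas \ref{evenrowcol} and \ref{turnexistence}, namely that the single turn guaranteed on one of the two grid lines through $c$ forces a second turn on that same line by parity. Nothing is missing.
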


\begin{lemma}\label{turninoddrow}
In a Hamilton cycle on $G(m,n)$, where $m$ is odd, there is at least one turn in every row.
\end{lemma}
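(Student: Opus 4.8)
The plan is to proceed by contradiction: fix a row $R$ and suppose that the Hamilton cycle makes no turn at any cell of $R$. The two boundary rows are immediate, since a corner cell has only two neighbours and the two edges to them are perpendicular, so every corner is automatically a turn; hence the top and bottom rows always contain a turn, regardless of parity. The real work is therefore in the interior rows, and this is where I expect to use the hypothesis that $m$ is odd.

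For an interior row $R$, the starting observation is a rigidity statement. Since the cycle passes through each of the $m$ cells of $R$ without turning, each such cell is traversed in a straight line: either \emph{horizontally}, meaning both of its horizontal edges lie in the cycle, or \emph{vertically}, meaning both of its vertical edges lie in the cycle. The next step is to show that a single row cannot mix the two types. I would look at the horizontal edge joining two consecutive cells of $R$: it lies in the cycle precisely when the left cell is traversed horizontally, and also precisely when the right cell is, so consecutive cells must share a type. Propagating along the row then forces all $m$ cells to have the same type. An all-horizontal row is impossible because the leftmost cell has no left neighbour and so cannot use two horizontal edges; hence every cell of $R$ must be traversed vertically.

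Finally I would invoke a parity-of-cut argument to derive the contradiction. If every cell of $R$ is traversed vertically, then the downward edge of each of the $m$ cells lies in the cycle, so exactly $m$ cycle edges cross the horizontal cut separating $R$ from the rows below it. But the number of cycle edges crossing any horizontal cut is even: summing the cycle-degrees (each equal to $2$) over the sub-rectangle of cells lying below the cut gives an even total, and this total counts each internal edge twice and each crossing edge once, forcing the number of crossing edges to be even. Since $m$ is odd, this is a contradiction, which completes the interior case. The main obstacle is getting the rigidity step exactly right — in particular the clean propagation showing that consecutive straight cells must share an orientation, together with the boundary cell that rules out the all-horizontal configuration; once that is in place, the parity-of-cut fact finishes the argument.
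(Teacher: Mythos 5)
Your proposal is correct, and its skeleton matches the paper's proof exactly: assume a row $R$ with no turns, show that every cell of $R$ must then be traversed vertically (your propagation argument along the row, with the end cell ruling out the all-horizontal case, is the justification the paper compresses into its one-line remark that the cycle ``cannot pass through any cell in row $R$ horizontally''), and then derive a contradiction from the parity of the number of vertical crossings, which must be $m$. The only genuine difference is how that parity fact is established. The paper orients the cycle, picks a base cell below $R$, and traverses the cycle once, observing that upward and downward passages through $R$ alternate, so they are equal in number and $m$ is even. You instead use a handshake/cut argument: summing the cycle-degrees (all equal to $2$) over the cells strictly below $R$ counts internal edges twice and crossing edges once, so the number of edges crossing the cut under $R$ is even. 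Both are standard proofs that a cycle crosses a cut an even number of times; the paper's traversal is more hands-on and mirrors the technique it already used in Lemma \ref{evenrowcol}, while your degree-sum version needs no orientation or base point and applies verbatim to any union of cycles (i.e.\ any $2$-regular subgraph), not just a single Hamilton cycle --- a mild extra generality that is in the spirit of the half-form arguments later in the paper. Your separate treatment of the boundary rows via corner cells is harmless but not needed: for a boundary row the vertical-traversal conclusion is already impossible (a missing vertical edge), so the same contradiction arrives even faster.
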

\begin{proof}
Assume for contradiction that some row $R$ contains no turns. Note that the cycle cannot pass through any cell in row $R$ horizontally.

Orient the cycle clockwise. The cycle passes through each square in $R$ in one of two ways: moving upward or moving downward. Choose a cell $c$ not in $R$. Without loss of generality, assume that $c$ is below $R$. We now travel along the loop exactly once, starting and ending at $c$.

The first cell in $R$ that we visit must be moved through upward, and the last cell in $R$ we visit must be moved through downward. The direction we pass through $R$ alternates as we travel, and hence there are the same number of squares moved upward as downward. But this implies that there are an even number of squares in $R$, contradicting the assumption that $m$ is odd.
\end{proof}

By symmetry, if $n$ is odd, we have at least one turn every column.

Combining Lemmas \ref{evenrowcol} and \ref{turninoddrow}, we have the following corollary. 
\begin{corollary}\label{2turnsodd}
In a Hamilton cycle on $G(m,n)$, where $m$ is odd, then there are at least two turns in every row. \qed
\end{corollary}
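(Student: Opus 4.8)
The plan is to combine the two results immediately preceding the corollary, exactly as the hint "Combining Lemmas \ref{evenrowcol} and \ref{turninoddrow}" suggests. Lemma \ref{turninoddrow} guarantees that, when $m$ is odd, every row of a Hamilton cycle on $G(m,n)$ contains at least one turn. Lemma \ref{evenrowcol} guarantees that every row of \emph{any} cycle contains an even number of turns. Since a Hamilton cycle is in particular a cycle, both lemmas apply simultaneously to the same object.

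First I would fix an arbitrary row $R$. By Lemma \ref{turninoddrow}, the number of turns in $R$ is at least $1$. By Lemma \ref{evenrowcol}, this number is even. An even integer that is at least $1$ must be at least $2$, so $R$ contains at least two turns. As $R$ was arbitrary, the claim holds for every row.

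There is essentially no obstacle here: the only point to check is that both lemmas apply to the same configuration, which holds because a Hamilton cycle is a cycle and $m$ is assumed odd. Structurally the argument is the exact analogue of how Corollary \ref{2turns} is deduced from Lemmas \ref{evenrowcol} and \ref{turnexistence}; here the parity constraint of Lemma \ref{evenrowcol} upgrades the "at least one turn" guarantee of Lemma \ref{turninoddrow} to "at least two" in precisely the same way.
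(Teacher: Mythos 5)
Your proof is correct and is exactly the paper's argument: the corollary is stated with an immediate \qed precisely because Lemma \ref{turninoddrow} gives at least one turn per row and Lemma \ref{evenrowcol} forces that count to be even, hence at least two. Nothing further is needed.
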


\section{Minimum turns}\label{min}
In this section we solve the problem of the minimum number of turns over all cycles of a grid; this problem is significantly easier than the problem of maximum turns, and the result will be used in the proof of specific cases of the maximum turn question. The main result we prove in this section is the following theorem.

\begin{theorem}\label{minturns}
The minimum number of turns of any Hamilton cycle of $G(m,n)$, where $m \geq n$, is $2m$ if $n$ is odd, and $2n$ otherwise.
\end{theorem}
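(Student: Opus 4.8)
The plan is to establish matching lower and upper bounds. The backbone is a bookkeeping identity: writing $H$ and $V$ for the numbers of maximal horizontal and vertical straight segments of the cycle, I read the edges around the cycle and observe that maximal horizontal and vertical runs alternate, so $H=V$, and every turn is exactly one junction between a horizontal run and a vertical run. Hence the total number of turns is $T=2H=2V$. Writing $h_i$ for the number of horizontal segments in row $i$, I also have $H=\sum_i h_i$. This refines Lemma \ref{evenrowcol} and lets me translate statements about segments into statements about turns.

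For the lower bound I would first prove the universal estimate $T\ge 2n$, valid for any parity. A cell in the bottom row has no downward edge, so at least one of its two cycle-edges is horizontal; thus row $1$ contains a horizontal edge, and likewise row $n$. Now either every row contains a horizontal edge, in which case $h_i\ge 1$ for all $i$ and $T=2\sum_i h_i\ge 2n$; or some row $a$ contains none, which by the previous sentence must be an interior row $2\le a\le n-1$. Then every cell of row $a$ uses both its vertical edges, so the cut between rows $a-1$ and $a$ is crossed by $m$ vertical edges, which lie in $m$ distinct vertical segments; hence $V\ge m\ge n$ and $T=2V\ge 2n$. When $n$ is odd I then strengthen this: by the column-analogue of Lemma \ref{turninoddrow} each of the $m$ columns contains a turn, and by Lemma \ref{evenrowcol} this number is even, so each column has at least two turns and $T\ge 2m$.

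For the matching constructions I would exhibit, for $n$ even, a boustrophedon that uses column $1$ as a full-height vertical spine and snakes horizontally across the remaining columns of rows $2,\dots,n-1$, with rows $1$ and $n$ spanning the full width. Each row then carries exactly one horizontal segment, so $h_i=1$ for every $i$, giving $H=n$ and $T=2n$. For $n$ odd (so $m$ is even) I would take the transpose of this construction, snaking through columns with a horizontal spine, to obtain a Hamilton cycle with exactly $2m$ turns. Matching the bounds within each parity class then yields the theorem.

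The routine parts are the two constructions and the odd-case lower bound, which falls straight out of the preliminary lemmas. The part needing genuine care—the main obstacle—is the all-even lower bound $T\ge 2n$, where neither Corollary \ref{2turnsodd} nor its column analogue applies. The key idea that unlocks it is the observation above that a turn-free interior row forces $m$ parallel vertical segments across a single horizontal cut, so that the apparent ``savings'' from a turn-free row are more than repaid by a proliferation of vertical segments.
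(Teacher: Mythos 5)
Your proposal is correct and follows essentially the same route as the paper: the same case split for the all-parities bound (either every row contains a turn, or a turn-free interior row forces a vertical crossing in every column, yielding $2m\ge 2n$ turns), the same parity strengthening via the column analogue of Lemma \ref{turninoddrow} for odd $n$, and the same spine-plus-snake construction (the paper's ``pronged'' shape) together with its transpose. Your identity $T=2H=2V$ is just a repackaging of the paper's Lemma \ref{evenrowcol} and Corollary \ref{2turns}, so the differences are purely bookkeeping.
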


We start with the following lemma.

\begin{lemma}\label{minab}
In a Hamilton cycle on $G(m,n)$ where $m \geq n$, there are at least $2n$ turns.
\end{lemma}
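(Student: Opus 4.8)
The plan is to recast the count in terms of maximal straight segments. Since the cycle is closed and every turn is a $90^\circ$ change of direction, the number of turns equals the number of maximal straight (horizontal or vertical) runs, and as one traverses the cycle these runs alternate between horizontal and vertical. Hence the number of horizontal runs equals the number of vertical runs, and the total number of turns is exactly twice the number of horizontal runs. It therefore suffices to prove that any Hamilton cycle of $G(m,n)$ has at least $n$ maximal horizontal segments. I would first note why a purely global estimate is not enough: double-counting cells by the segments containing them only yields at least $mn/(m+n-2)$ segments of each type (horizontal runs can be as long as $m$), which falls well short of $n$. The target $2n$ genuinely reflects the `snaking' structure forced by visiting every cell, so a structural argument is required.

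I would prove the sharper claim — at least $n$ horizontal segments — by induction on $n$, peeling off the bottom row $R_1$. Since no cell of $R_1$ has a neighbour below it, no cell of $R_1$ carries a downward edge; consequently every cell of $R_1$ is either interior to a horizontal run or a turn whose vertical edge points up. Thus $R_1$ is partitioned into \emph{caps}: maximal horizontal segments whose two endpoints send edges up into row $2$. In particular $R_1$ contains at least one horizontal segment. Deleting $R_1$ and reconnecting the resulting loose ends within row $2$ produces a Hamilton cycle of $G(m,n-1)$, and the aim is to arrange this reconnection so that the number of horizontal segments drops by at least one; the inductive hypothesis then gives at least $n-1$ horizontal segments for the smaller cycle, and hence at least $n$ for the original.

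The main obstacle is the bookkeeping of this reconnection. A cap may be wide, so its two feet in row $2$ need not be adjacent, and splicing them can both destroy horizontal runs of $R_1$ and merge or split horizontal runs already present in row $2$; one must verify that the net change is a strict decrease. The most delicate situation is a \emph{transparent} row — one all of whose cells are vertical-straight, so that it contains no horizontal segment of its own — since such a row contributes nothing locally and the compensating segments must be located in neighbouring rows. I expect to handle these by tracking how the feet of the caps interleave with the segment boundaries of row $2$, using Lemma \ref{evenrowcol} to control the relevant parities.

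As a sanity check and a genuine shortcut, the cases where $m$ is odd follow immediately from Corollary \ref{2turnsodd}, which gives at least two turns in each of the $n$ rows and hence at least $2n$ turns. By the transpose symmetry $G(m,n)\cong G(n,m)$, the symmetric form of Corollary \ref{2turnsodd} handles the cases where $n$ is odd: it yields at least two turns in each of the $m$ columns, and so at least $2m\ge 2n$ turns. Consequently the structural induction above is only needed when $m$ and $n$ are both even, which is exactly the regime in which rows can be transparent and the per-row bound fails.
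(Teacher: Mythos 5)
Your opening reduction (turns $=$ twice the number of maximal horizontal runs) is correct, and your dispatch of the odd cases via Corollary~\ref{2turnsodd} and its transpose is fine. But the case you yourself identify as the only remaining one --- $m$ and $n$ both even --- rests entirely on an inductive step that you never establish, and this is a genuine gap rather than deferred bookkeeping. The step has two unproven halves. First, \emph{existence of the reconnection}: after deleting row $1$, the feet of a cap are the two row-$2$ cells above its endpoints, and these are in general not adjacent; every cell of row $2$ strictly between them already has degree $2$, so you cannot simply add a joining path --- any splice must toggle existing edges (a symmetric difference with a horizontal run of row $2$). That operation preserves degree parity but not degree: a vertical straight in row $2$ caught between two feet can end up with degree $4$ or degree $0$, and even when every degree comes out to $2$, the result may be a disjoint union of several cycles rather than a Hamilton cycle of $G(m,n-1)$. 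Parity tools such as Lemma~\ref{evenrowcol} say nothing about connectivity, which is exactly what symmetric-difference surgery fails to control (this is why the paper has to argue Hamiltonicity separately wherever it uses such overlays, e.g.\ Lemma~\ref{hamiltonianhalfform}). Second, even granting a valid reconnection, the claim that the number of horizontal runs strictly decreases is asserted, not proved; caps being destroyed can be offset by new runs created in row $2$, and your proposal only promises to ``track the interleaving.'' Without both halves the induction does not get off the ground.

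The frustrating part is that the missing case follows in three lines from the lemmas you already cite, with no induction and no surgery: either every row contains a turn, in which case Lemma~\ref{evenrowcol} gives at least two turns in each of the $n$ rows, hence at least $2n$ in total; or some row $R$ contains no turn, in which case, for each of the $m$ columns, Lemma~\ref{turnexistence} applied to the cell where that column meets $R$ produces a turn in the column, so Lemma~\ref{evenrowcol} gives at least two turns per column and at least $2m \geq 2n$ in total. This dichotomy (which is the paper's own proof, via Corollary~\ref{2turns}) works uniformly for all parities, so the run-counting reformulation and the row-peeling machinery are unnecessary as well as incomplete.
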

\begin{proof}
The union of any given row and column contains at least two turns, by Corollary \ref{2turns}. If there exists a row containing no turns, then every column has at least two turns, and there are at least $2m \geq 2n$ turns. Otherwise, every row has at least two turns by Lemma \ref{evenrowcol}, and there are at least $2n$ turns.
\end{proof}

\begin{lemma}\label{oddturn}
In a Hamilton cycle on $G(m,n)$, if $m$ is odd, there are at least $2n$ turns.
\end{lemma}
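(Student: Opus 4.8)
The plan is to bound the number of turns row by row, exploiting that $m$ is odd. Recall that in $G(m,n)$ each row consists of $m$ cells and there are $n$ rows in total, so the hypothesis ``$m$ odd'' is precisely the statement that every row has odd length. The key observation is that a turn occupies a single cell and hence lies in exactly one row; therefore the total number of turns equals the sum, over the $n$ rows, of the number of turns in each row. It thus suffices to produce a lower bound of $2$ for each individual row.

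This per-row bound is exactly Corollary \ref{2turnsodd}: since $m$ is odd, every row contains at least two turns. Summing over all $n$ rows then gives at least $2n$ turns in total, as required. If one prefers not to invoke the corollary directly, the same bound follows from combining Lemma \ref{turninoddrow} (which guarantees at least one turn in each row when $m$ is odd) with Lemma \ref{evenrowcol} (which forces the number of turns in any row to be even): one turn together with evenness upgrades to at least two.

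Since the genuine content has already been isolated in Lemma \ref{turninoddrow}---whose proof traverses the cycle once and uses the alternation of upward and downward crossings of a turn-free row to force an even row length---there is no substantive obstacle remaining here. The only points demanding care are bookkeeping: making sure we count along rows (each of odd length $m$) rather than columns, and confirming that the per-row counts add up to the global count with no double counting, which holds because every turn sits at a unique cell and hence in a unique row. Finally, note that applying this lemma with the roles of $m$ and $n$ exchanged yields the complementary bound of $2m$ when $n$ is odd, which is the form one needs to establish the lower bound in the odd case of Theorem \ref{minturns}.
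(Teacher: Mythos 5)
Your proof is correct and is essentially identical to the paper's: the paper also deduces the lemma immediately from Corollary \ref{2turnsodd} (each of the $n$ rows contains at least two turns, and turns lie in unique rows), which is itself the combination of Lemmas \ref{evenrowcol} and \ref{turninoddrow} that you cite as the alternative route. No gaps; the extra bookkeeping you spell out is exactly what the paper leaves implicit.
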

\begin{proof}
This follows immediately from Corollary \ref{2turnsodd}.
\end{proof}

Lemmas \ref{minab} and \ref{oddturn} give us bounds on $G(m,n)$. In fact, these bounds are tight; for any $m, n$, the stronger of these bounds is always attainable, which we will demonstrate below.

\begin{claim}\label{minconstruct}
In a Hamilton cycle on $G(m,n)$, where $m \geq n$, there is a cycle with $2n$ turns if $n$ is even, and $2m$ turns if $n$ is odd.
\end{claim}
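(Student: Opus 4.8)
The lower bounds of Lemmas \ref{minab} and \ref{oddturn} are already in hand, so the plan is purely constructive: for each $(m,n)$ I would exhibit a single Hamilton cycle meeting the stated bound. First I would reduce to one case. Since $mn$ is even, if $n$ is odd then $m$ is necessarily even; transposing the grid (swapping the roles of rows and columns) carries a Hamilton cycle on $G(m,n)$ to one on $G(n,m)$ with the same number of turns. Hence it suffices to build, for a grid with an \emph{even number of rows}, a Hamilton cycle having exactly two turns in each row, since the total is then twice the number of rows. Applied with the $n$ (even) rows this yields $2n$ turns, and applied after transposing the $n$-odd case (where $m$ is even, so $G(n,m)$ has $m$ even rows) it yields $2m$ turns, matching the claim in both regimes.

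For the even case I would use a \emph{serpentine with a spine}. Reserve the leftmost column $a=1$ as a vertical spine, and run a boustrophedon path through the sub-grid on columns $a = 2, \dots, m$: traverse row $1$ rightward from $(2,1)$ to $(m,1)$, step up, traverse row $2$ leftward back to $(2,2)$, step up, and continue, alternating direction in each successive row. Because the number of rows is even, this serpentine terminates on the left side at $(2,n)$; I then continue straight left into the spine at $(1,n)$, descend the entire spine to $(1,1)$, and close up by turning right into $(2,1)$, the start of the serpentine. One checks directly that this visits every cell exactly once and forms a single closed cycle.

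It remains to count turns and confirm the parity bookkeeping. In this cycle every row contains exactly two turns, namely the two ends of its horizontal stroke (with the understanding that in the top and bottom rows one of these turns lies in the spine column $a=1$), so the total is exactly twice the number of rows, as required. I would verify this row-by-row count carefully, as it is the crux of the argument. The main obstacle is the closure/parity issue: the construction assembles into a single cycle precisely because the serpentine runs over an even number of rows, which forces its two free ends to lie on the same side so that the lone spine can join them. Were the number of rows odd, the two ends would lie on opposite sides and a single spine could not close the path without incurring extra turns. This is exactly what makes the even-row reduction essential and explains why the $n$-odd case must be routed through the transpose, using that $m$ is even.
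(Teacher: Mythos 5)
Your proposal is correct and takes essentially the same approach as the paper: your ``serpentine with a spine'' is exactly the ``pronged'' construction shown in Figures \ref{6x4min} and \ref{11x8min} (left column as spine, boustrophedon over the remaining columns, two turns per row), and your transposition argument for the $n$-odd case is just the paper's ``rotating the direction of the prong''. The turn count and the even-row parity observation match the paper's construction, so there is no gap.
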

\begin{proof}
If $m$ is even, we can construct a cycle that contains exactly two turns in every row, giving a total of $2n$ turns, by using a ``pronged" shape as shown in the below two figures:

    \begin{figure}[H]
        \centering
        \begin{tikzpicture}
\draw[step=1cm,gray,thin,dashed] (0,0) grid (6,4);
\draw [black, thin] (0, 0) -- (6, 0) -- (6, 4) -- (0, 4) -- cycle;
\draw [black, thick] (0.5, 0.5) -- (1.5, 0.5);
\draw [black, thick] (1.5, 0.5) -- (2.5, 0.5);
\draw [black, thick] (2.5, 0.5) -- (3.5, 0.5);
\draw [black, thick] (3.5, 0.5) -- (4.5, 0.5);
\draw [black, thick] (4.5, 0.5) -- (5.5, 0.5);
\draw [black, thick] (1.5, 1.5) -- (2.5, 1.5);
\draw [black, thick] (2.5, 1.5) -- (3.5, 1.5);
\draw [black, thick] (3.5, 1.5) -- (4.5, 1.5);
\draw [black, thick] (4.5, 1.5) -- (5.5, 1.5);
\draw [black, thick] (1.5, 2.5) -- (2.5, 2.5);
\draw [black, thick] (2.5, 2.5) -- (3.5, 2.5);
\draw [black, thick] (3.5, 2.5) -- (4.5, 2.5);
\draw [black, thick] (4.5, 2.5) -- (5.5, 2.5);
\draw [black, thick] (0.5, 3.5) -- (1.5, 3.5);
\draw [black, thick] (1.5, 3.5) -- (2.5, 3.5);
\draw [black, thick] (2.5, 3.5) -- (3.5, 3.5);
\draw [black, thick] (3.5, 3.5) -- (4.5, 3.5);
\draw [black, thick] (4.5, 3.5) -- (5.5, 3.5);
\draw [black, thick] (0.5, 0.5) -- (0.5, 1.5);
\draw [black, thick] (5.5, 0.5) -- (5.5, 1.5);
\draw [black, thick] (0.5, 1.5) -- (0.5, 2.5);
\draw [black, thick] (1.5, 1.5) -- (1.5, 2.5);
\draw [black, thick] (0.5, 2.5) -- (0.5, 3.5);
\draw [black, thick] (5.5, 2.5) -- (5.5, 3.5);
        \end{tikzpicture}
        \caption{$8$ turns in $G(6,4)$.}
        \label{6x4min}
    \end{figure}

    \begin{figure}[H]
        \centering
        \begin{tikzpicture}
\draw[step=1cm,gray,thin,dashed] (0,0) grid (11,8);
\draw [black, thin] (0, 0) -- (11, 0) -- (11, 8) -- (0, 8) -- cycle;
\draw [black, thick] (0.5, 0.5) -- (1.5, 0.5);
\draw [black, thick] (1.5, 0.5) -- (2.5, 0.5);
\draw [black, thick] (2.5, 0.5) -- (3.5, 0.5);
\draw [black, thick] (3.5, 0.5) -- (4.5, 0.5);
\draw [black, thick] (4.5, 0.5) -- (5.5, 0.5);
\draw [black, thick] (5.5, 0.5) -- (6.5, 0.5);
\draw [black, thick] (6.5, 0.5) -- (7.5, 0.5);
\draw [black, thick] (7.5, 0.5) -- (8.5, 0.5);
\draw [black, thick] (8.5, 0.5) -- (9.5, 0.5);
\draw [black, thick] (9.5, 0.5) -- (10.5, 0.5);
\draw [black, thick] (1.5, 1.5) -- (2.5, 1.5);
\draw [black, thick] (2.5, 1.5) -- (3.5, 1.5);
\draw [black, thick] (3.5, 1.5) -- (4.5, 1.5);
\draw [black, thick] (4.5, 1.5) -- (5.5, 1.5);
\draw [black, thick] (5.5, 1.5) -- (6.5, 1.5);
\draw [black, thick] (6.5, 1.5) -- (7.5, 1.5);
\draw [black, thick] (7.5, 1.5) -- (8.5, 1.5);
\draw [black, thick] (8.5, 1.5) -- (9.5, 1.5);
\draw [black, thick] (9.5, 1.5) -- (10.5, 1.5);
\draw [black, thick] (1.5, 2.5) -- (2.5, 2.5);
\draw [black, thick] (2.5, 2.5) -- (3.5, 2.5);
\draw [black, thick] (3.5, 2.5) -- (4.5, 2.5);
\draw [black, thick] (4.5, 2.5) -- (5.5, 2.5);
\draw [black, thick] (5.5, 2.5) -- (6.5, 2.5);
\draw [black, thick] (6.5, 2.5) -- (7.5, 2.5);
\draw [black, thick] (7.5, 2.5) -- (8.5, 2.5);
\draw [black, thick] (8.5, 2.5) -- (9.5, 2.5);
\draw [black, thick] (9.5, 2.5) -- (10.5, 2.5);
\draw [black, thick] (1.5, 3.5) -- (2.5, 3.5);
\draw [black, thick] (2.5, 3.5) -- (3.5, 3.5);
\draw [black, thick] (3.5, 3.5) -- (4.5, 3.5);
\draw [black, thick] (4.5, 3.5) -- (5.5, 3.5);
\draw [black, thick] (5.5, 3.5) -- (6.5, 3.5);
\draw [black, thick] (6.5, 3.5) -- (7.5, 3.5);
\draw [black, thick] (7.5, 3.5) -- (8.5, 3.5);
\draw [black, thick] (8.5, 3.5) -- (9.5, 3.5);
\draw [black, thick] (9.5, 3.5) -- (10.5, 3.5);
\draw [black, thick] (1.5, 4.5) -- (2.5, 4.5);
\draw [black, thick] (2.5, 4.5) -- (3.5, 4.5);
\draw [black, thick] (3.5, 4.5) -- (4.5, 4.5);
\draw [black, thick] (4.5, 4.5) -- (5.5, 4.5);
\draw [black, thick] (5.5, 4.5) -- (6.5, 4.5);
\draw [black, thick] (6.5, 4.5) -- (7.5, 4.5);
\draw [black, thick] (7.5, 4.5) -- (8.5, 4.5);
\draw [black, thick] (8.5, 4.5) -- (9.5, 4.5);
\draw [black, thick] (9.5, 4.5) -- (10.5, 4.5);
\draw [black, thick] (1.5, 5.5) -- (2.5, 5.5);
\draw [black, thick] (2.5, 5.5) -- (3.5, 5.5);
\draw [black, thick] (3.5, 5.5) -- (4.5, 5.5);
\draw [black, thick] (4.5, 5.5) -- (5.5, 5.5);
\draw [black, thick] (5.5, 5.5) -- (6.5, 5.5);
\draw [black, thick] (6.5, 5.5) -- (7.5, 5.5);
\draw [black, thick] (7.5, 5.5) -- (8.5, 5.5);
\draw [black, thick] (8.5, 5.5) -- (9.5, 5.5);
\draw [black, thick] (9.5, 5.5) -- (10.5, 5.5);
\draw [black, thick] (1.5, 6.5) -- (2.5, 6.5);
\draw [black, thick] (2.5, 6.5) -- (3.5, 6.5);
\draw [black, thick] (3.5, 6.5) -- (4.5, 6.5);
\draw [black, thick] (4.5, 6.5) -- (5.5, 6.5);
\draw [black, thick] (5.5, 6.5) -- (6.5, 6.5);
\draw [black, thick] (6.5, 6.5) -- (7.5, 6.5);
\draw [black, thick] (7.5, 6.5) -- (8.5, 6.5);
\draw [black, thick] (8.5, 6.5) -- (9.5, 6.5);
\draw [black, thick] (9.5, 6.5) -- (10.5, 6.5);
\draw [black, thick] (0.5, 7.5) -- (1.5, 7.5);
\draw [black, thick] (1.5, 7.5) -- (2.5, 7.5);
\draw [black, thick] (2.5, 7.5) -- (3.5, 7.5);
\draw [black, thick] (3.5, 7.5) -- (4.5, 7.5);
\draw [black, thick] (4.5, 7.5) -- (5.5, 7.5);
\draw [black, thick] (5.5, 7.5) -- (6.5, 7.5);
\draw [black, thick] (6.5, 7.5) -- (7.5, 7.5);
\draw [black, thick] (7.5, 7.5) -- (8.5, 7.5);
\draw [black, thick] (8.5, 7.5) -- (9.5, 7.5);
\draw [black, thick] (9.5, 7.5) -- (10.5, 7.5);
\draw [black, thick] (0.5, 0.5) -- (0.5, 1.5);
\draw [black, thick] (10.5, 0.5) -- (10.5, 1.5);
\draw [black, thick] (0.5, 1.5) -- (0.5, 2.5);
\draw [black, thick] (1.5, 1.5) -- (1.5, 2.5);
\draw [black, thick] (0.5, 2.5) -- (0.5, 3.5);
\draw [black, thick] (10.5, 2.5) -- (10.5, 3.5);
\draw [black, thick] (0.5, 3.5) -- (0.5, 4.5);
\draw [black, thick] (1.5, 3.5) -- (1.5, 4.5);
\draw [black, thick] (0.5, 4.5) -- (0.5, 5.5);
\draw [black, thick] (10.5, 4.5) -- (10.5, 5.5);
\draw [black, thick] (0.5, 5.5) -- (0.5, 6.5);
\draw [black, thick] (1.5, 5.5) -- (1.5, 6.5);
\draw [black, thick] (0.5, 6.5) -- (0.5, 7.5);
\draw [black, thick] (10.5, 6.5) -- (10.5, 7.5);
        \end{tikzpicture}
        \caption{$16$ turns in $G(11,8)$.}
        \label{11x8min}
    \end{figure}

If $n$ is odd, we can construct $2m$ by simply rotating the direction of the ``prong", with two turns in every column instead of row, shown in the below example.

    \begin{figure}[H]
        \centering
        \begin{tikzpicture}
\draw[step=1cm,gray,thin,dashed] (0,0) grid (10,7);
\draw [black, thin] (0, 0) -- (10, 0) -- (10, 7) -- (0, 7) -- cycle;
\draw [black, thick] (0.5, 0.5) -- (1.5, 0.5);
\draw [black, thick] (1.5, 0.5) -- (2.5, 0.5);
\draw [black, thick] (2.5, 0.5) -- (3.5, 0.5);
\draw [black, thick] (3.5, 0.5) -- (4.5, 0.5);
\draw [black, thick] (4.5, 0.5) -- (5.5, 0.5);
\draw [black, thick] (5.5, 0.5) -- (6.5, 0.5);
\draw [black, thick] (6.5, 0.5) -- (7.5, 0.5);
\draw [black, thick] (7.5, 0.5) -- (8.5, 0.5);
\draw [black, thick] (8.5, 0.5) -- (9.5, 0.5);
\draw [black, thick] (1.5, 1.5) -- (2.5, 1.5);
\draw [black, thick] (3.5, 1.5) -- (4.5, 1.5);
\draw [black, thick] (5.5, 1.5) -- (6.5, 1.5);
\draw [black, thick] (7.5, 1.5) -- (8.5, 1.5);
\draw [black, thick] (0.5, 6.5) -- (1.5, 6.5);
\draw [black, thick] (2.5, 6.5) -- (3.5, 6.5);
\draw [black, thick] (4.5, 6.5) -- (5.5, 6.5);
\draw [black, thick] (6.5, 6.5) -- (7.5, 6.5);
\draw [black, thick] (8.5, 6.5) -- (9.5, 6.5);
\draw [black, thick] (0.5, 0.5) -- (0.5, 1.5);
\draw [black, thick] (9.5, 0.5) -- (9.5, 1.5);
\draw [black, thick] (0.5, 1.5) -- (0.5, 2.5);
\draw [black, thick] (1.5, 1.5) -- (1.5, 2.5);
\draw [black, thick] (2.5, 1.5) -- (2.5, 2.5);
\draw [black, thick] (3.5, 1.5) -- (3.5, 2.5);
\draw [black, thick] (4.5, 1.5) -- (4.5, 2.5);
\draw [black, thick] (5.5, 1.5) -- (5.5, 2.5);
\draw [black, thick] (6.5, 1.5) -- (6.5, 2.5);
\draw [black, thick] (7.5, 1.5) -- (7.5, 2.5);
\draw [black, thick] (8.5, 1.5) -- (8.5, 2.5);
\draw [black, thick] (9.5, 1.5) -- (9.5, 2.5);
\draw [black, thick] (0.5, 2.5) -- (0.5, 3.5);
\draw [black, thick] (1.5, 2.5) -- (1.5, 3.5);
\draw [black, thick] (2.5, 2.5) -- (2.5, 3.5);
\draw [black, thick] (3.5, 2.5) -- (3.5, 3.5);
\draw [black, thick] (4.5, 2.5) -- (4.5, 3.5);
\draw [black, thick] (5.5, 2.5) -- (5.5, 3.5);
\draw [black, thick] (6.5, 2.5) -- (6.5, 3.5);
\draw [black, thick] (7.5, 2.5) -- (7.5, 3.5);
\draw [black, thick] (8.5, 2.5) -- (8.5, 3.5);
\draw [black, thick] (9.5, 2.5) -- (9.5, 3.5);
\draw [black, thick] (0.5, 3.5) -- (0.5, 4.5);
\draw [black, thick] (1.5, 3.5) -- (1.5, 4.5);
\draw [black, thick] (2.5, 3.5) -- (2.5, 4.5);
\draw [black, thick] (3.5, 3.5) -- (3.5, 4.5);
\draw [black, thick] (4.5, 3.5) -- (4.5, 4.5);
\draw [black, thick] (5.5, 3.5) -- (5.5, 4.5);
\draw [black, thick] (6.5, 3.5) -- (6.5, 4.5);
\draw [black, thick] (7.5, 3.5) -- (7.5, 4.5);
\draw [black, thick] (8.5, 3.5) -- (8.5, 4.5);
\draw [black, thick] (9.5, 3.5) -- (9.5, 4.5);
\draw [black, thick] (0.5, 4.5) -- (0.5, 5.5);
\draw [black, thick] (1.5, 4.5) -- (1.5, 5.5);
\draw [black, thick] (2.5, 4.5) -- (2.5, 5.5);
\draw [black, thick] (3.5, 4.5) -- (3.5, 5.5);
\draw [black, thick] (4.5, 4.5) -- (4.5, 5.5);
\draw [black, thick] (5.5, 4.5) -- (5.5, 5.5);
\draw [black, thick] (6.5, 4.5) -- (6.5, 5.5);
\draw [black, thick] (7.5, 4.5) -- (7.5, 5.5);
\draw [black, thick] (8.5, 4.5) -- (8.5, 5.5);
\draw [black, thick] (9.5, 4.5) -- (9.5, 5.5);
\draw [black, thick] (0.5, 5.5) -- (0.5, 6.5);
\draw [black, thick] (1.5, 5.5) -- (1.5, 6.5);
\draw [black, thick] (2.5, 5.5) -- (2.5, 6.5);
\draw [black, thick] (3.5, 5.5) -- (3.5, 6.5);
\draw [black, thick] (4.5, 5.5) -- (4.5, 6.5);
\draw [black, thick] (5.5, 5.5) -- (5.5, 6.5);
\draw [black, thick] (6.5, 5.5) -- (6.5, 6.5);
\draw [black, thick] (7.5, 5.5) -- (7.5, 6.5);
\draw [black, thick] (8.5, 5.5) -- (8.5, 6.5);
\draw [black, thick] (9.5, 5.5) -- (9.5, 6.5);
        \end{tikzpicture}
        \caption{$20$ turns in $G(10,7)$.}
        \label{10x7min}
    \end{figure}

These constructions prove the claim.
\end{proof}

The same constructions, but rotated, work for $m < n$. Hence we have proven Theorem \ref{minturns}.

The constructions given in Claim \ref{minconstruct} are by no means the only ones. Many other constructions exist, including a snake-like construction, similar to Figure \ref{examplegrid}, which has exactly two turns in every column. Jelliss \cite{jelliss} gives all such constructions for the $6 \times 6$ and $8 \times 8$ cases.

\section{Maximum turns in squares}\label{max}

The square case for maximum turns was solved by Beluhov \cite{beluhov}. We provide a significantly different proof for the square cases, which will build important methods for working on the general rectangle case.

Call a cell a {\it straight} if it is not a turn; maximising the number of turns is equivalent to minimising the number of straights.

We now come to the connection between maximum turns and minimum turns. Define the {\it square overlay} to be a set of $\frac{n^2}{4}$ cycles going between cells $(2i-1,2j-1),(2i,2j-1),(2i,2j)$ and $(2i-1,2j)$ for $1\le i,j\le \frac n2$.

    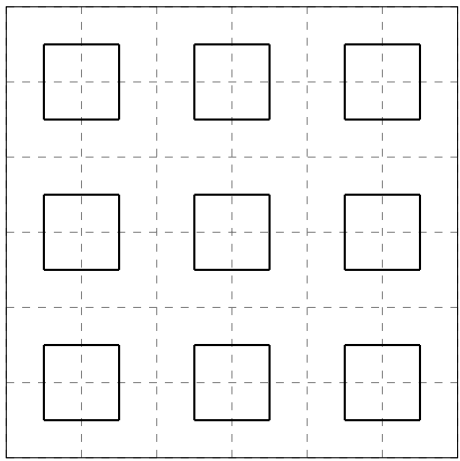
\begin{figure}[H]
        \centering
        \begin{tikzpicture}
\draw[step=1cm,gray,thin,dashed] (0,0) grid (6,6);
\draw [black, thin] (0, 0) -- (6, 0) -- (6, 6) -- (0, 6) -- cycle;
\draw [black, thick] (0.5, 0.5) -- (1.5, 0.5);
\draw [black, thick] (2.5, 0.5) -- (3.5, 0.5);
\draw [black, thick] (4.5, 0.5) -- (5.5, 0.5);
\draw [black, thick] (0.5, 1.5) -- (1.5, 1.5);
\draw [black, thick] (2.5, 1.5) -- (3.5, 1.5);
\draw [black, thick] (4.5, 1.5) -- (5.5, 1.5);
\draw [black, thick] (0.5, 2.5) -- (1.5, 2.5);
\draw [black, thick] (2.5, 2.5) -- (3.5, 2.5);
\draw [black, thick] (4.5, 2.5) -- (5.5, 2.5);
\draw [black, thick] (0.5, 3.5) -- (1.5, 3.5);
\draw [black, thick] (2.5, 3.5) -- (3.5, 3.5);
\draw [black, thick] (4.5, 3.5) -- (5.5, 3.5);
\draw [black, thick] (0.5, 4.5) -- (1.5, 4.5);
\draw [black, thick] (2.5, 4.5) -- (3.5, 4.5);
\draw [black, thick] (4.5, 4.5) -- (5.5, 4.5);
\draw [black, thick] (0.5, 5.5) -- (1.5, 5.5);
\draw [black, thick] (2.5, 5.5) -- (3.5, 5.5);
\draw [black, thick] (4.5, 5.5) -- (5.5, 5.5);
\draw [black, thick] (0.5, 0.5) -- (0.5, 1.5);
\draw [black, thick] (1.5, 0.5) -- (1.5, 1.5);
\draw [black, thick] (2.5, 0.5) -- (2.5, 1.5);
\draw [black, thick] (3.5, 0.5) -- (3.5, 1.5);
\draw [black, thick] (4.5, 0.5) -- (4.5, 1.5);
\draw [black, thick] (5.5, 0.5) -- (5.5, 1.5);
\draw [black, thick] (0.5, 2.5) -- (0.5, 3.5);
\draw [black, thick] (1.5, 2.5) -- (1.5, 3.5);
\draw [black, thick] (2.5, 2.5) -- (2.5, 3.5);
\draw [black, thick] (3.5, 2.5) -- (3.5, 3.5);
\draw [black, thick] (4.5, 2.5) -- (4.5, 3.5);
\draw [black, thick] (5.5, 2.5) -- (5.5, 3.5);
\draw [black, thick] (0.5, 4.5) -- (0.5, 5.5);
\draw [black, thick] (1.5, 4.5) -- (1.5, 5.5);
\draw [black, thick] (2.5, 4.5) -- (2.5, 5.5);
\draw [black, thick] (3.5, 4.5) -- (3.5, 5.5);
\draw [black, thick] (4.5, 4.5) -- (4.5, 5.5);
\draw [black, thick] (5.5, 4.5) -- (5.5, 5.5);
        \end{tikzpicture}
        \caption{A square overlay of $G(6,6)$}
        \label{6x6overlay}
    \end{figure}

Consider any Hamilton cycle in $G(n,n)$. We obtain the {\it reduced form} by taking the symmetric difference of each unit segment in the original cycle with that in the square overlay. Observe that this process is reversible.
    \begin{figure}[H]
        \centering
        \begin{tikzpicture}
\draw[step=0.5cm,gray,thin,dashed] (0,0) grid (3,3.0);
\draw [black, thin] (0, 0) -- (3, 0) -- (3, 3) -- (0, 3) -- cycle;
\draw[step=0.5cm,gray,thin,dashed] (3.5,0) grid (6.5,3);
\draw [black, thin] (3.5, 0) -- (6.5, 0) -- (6.5, 3) -- (3.5, 3) -- cycle;
\draw[step=0.5cm,gray,thin,dashed] (7,0) grid (10,3);
\draw [black, thin] (7, 0) -- (10, 0) -- (10, 3) -- (7, 3) -- cycle;

\draw [red, thick] (0.25, 0.25) -- (0.75, 0.25);
\draw [red, thick] (0.75, 0.25) -- (1.25, 0.25);
\draw [red, thick] (1.75, 0.25) -- (2.25, 0.25);
\draw [red, thick] (2.25, 0.25) -- (2.75, 0.25);
\draw [red, thick] (0.75, 0.75) -- (1.25, 0.75);
\draw [red, thick] (2.25, 0.75) -- (2.75, 0.75);
\draw [red, thick] (0.75, 1.25) -- (1.25, 1.25);
\draw [red, thick] (1.25, 1.25) -- (1.75, 1.25);
\draw [red, thick] (2.25, 1.25) -- (2.75, 1.25);
\draw [red, thick] (0.25, 1.75) -- (0.75, 1.75);
\draw [red, thick] (0.75, 1.75) -- (1.25, 1.75);
\draw [red, thick] (1.75, 1.75) -- (2.25, 1.75);
\draw [red, thick] (2.25, 1.75) -- (2.75, 1.75);
\draw [red, thick] (0.25, 2.25) -- (0.75, 2.25);
\draw [red, thick] (0.75, 2.25) -- (1.25, 2.25);
\draw [red, thick] (1.75, 2.25) -- (2.25, 2.25);
\draw [red, thick] (2.25, 2.25) -- (2.75, 2.25);
\draw [red, thick] (0.25, 2.75) -- (0.75, 2.75);
\draw [red, thick] (0.75, 2.75) -- (1.25, 2.75);
\draw [red, thick] (1.25, 2.75) -- (1.75, 2.75);
\draw [red, thick] (1.75, 2.75) -- (2.25, 2.75);
\draw [red, thick] (2.25, 2.75) -- (2.75, 2.75);
\draw [red, thick] (0.25, 0.25) -- (0.25, 0.75);
\draw [red, thick] (1.25, 0.25) -- (1.25, 0.75);
\draw [red, thick] (1.75, 0.25) -- (1.75, 0.75);
\draw [red, thick] (2.75, 0.25) -- (2.75, 0.75);
\draw [red, thick] (0.25, 0.75) -- (0.25, 1.25);
\draw [red, thick] (0.75, 0.75) -- (0.75, 1.25);
\draw [red, thick] (1.75, 0.75) -- (1.75, 1.25);
\draw [red, thick] (2.25, 0.75) -- (2.25, 1.25);
\draw [red, thick] (0.25, 1.25) -- (0.25, 1.75);
\draw [red, thick] (2.75, 1.25) -- (2.75, 1.75);
\draw [red, thick] (1.25, 1.75) -- (1.25, 2.25);
\draw [red, thick] (1.75, 1.75) -- (1.75, 2.25);
\draw [red, thick] (0.25, 2.25) -- (0.25, 2.75);
\draw [red, thick] (2.75, 2.25) -- (2.75, 2.75);

\draw [blue, thick] (3.75, 0.25) -- (4.25, 0.25);
\draw [blue, thick] (4.75, 0.25) -- (5.25, 0.25);
\draw [blue, thick] (5.75, 0.25) -- (6.25, 0.25);
\draw [blue, thick] (3.75, 0.75) -- (4.25, 0.75);
\draw [blue, thick] (4.75, 0.75) -- (5.25, 0.75);
\draw [blue, thick] (5.75, 0.75) -- (6.25, 0.75);
\draw [blue, thick] (3.75, 1.25) -- (4.25, 1.25);
\draw [blue, thick] (4.75, 1.25) -- (5.25, 1.25);
\draw [blue, thick] (5.75, 1.25) -- (6.25, 1.25);
\draw [blue, thick] (3.75, 1.75) -- (4.25, 1.75);
\draw [blue, thick] (4.75, 1.75) -- (5.25, 1.75);
\draw [blue, thick] (5.75, 1.75) -- (6.25, 1.75);
\draw [blue, thick] (3.75, 2.25) -- (4.25, 2.25);
\draw [blue, thick] (4.75, 2.25) -- (5.25, 2.25);
\draw [blue, thick] (5.75, 2.25) -- (6.25, 2.25);
\draw [blue, thick] (3.75, 2.75) -- (4.25, 2.75);
\draw [blue, thick] (4.75, 2.75) -- (5.25, 2.75);
\draw [blue, thick] (5.75, 2.75) -- (6.25, 2.75);
\draw [blue, thick] (3.75, 0.25) -- (3.75, 0.75);
\draw [blue, thick] (4.25, 0.25) -- (4.25, 0.75);
\draw [blue, thick] (4.75, 0.25) -- (4.75, 0.75);
\draw [blue, thick] (5.25, 0.25) -- (5.25, 0.75);
\draw [blue, thick] (5.75, 0.25) -- (5.75, 0.75);
\draw [blue, thick] (6.25, 0.25) -- (6.25, 0.75);
\draw [blue, thick] (3.75, 1.25) -- (3.75, 1.75);
\draw [blue, thick] (4.25, 1.25) -- (4.25, 1.75);
\draw [blue, thick] (4.75, 1.25) -- (4.75, 1.75);
\draw [blue, thick] (5.25, 1.25) -- (5.25, 1.75);
\draw [blue, thick] (5.75, 1.25) -- (5.75, 1.75);
\draw [blue, thick] (6.25, 1.25) -- (6.25, 1.75);
\draw [blue, thick] (3.75, 2.25) -- (3.75, 2.75);
\draw [blue, thick] (4.25, 2.25) -- (4.25, 2.75);
\draw [blue, thick] (4.75, 2.25) -- (4.75, 2.75);
\draw [blue, thick] (5.25, 2.25) -- (5.25, 2.75);
\draw [blue, thick] (5.75, 2.25) -- (5.75, 2.75);
\draw [blue, thick] (6.25, 2.25) -- (6.25, 2.75);

\draw [red, thick] (7.75, 0.25) -- (8.25, 0.25);
\draw [red, thick] (8.75, 0.25) -- (9.25, 0.25);
\draw [red, thick] (7.75, 0.75) -- (8.25, 0.75);
\draw [red, thick] (7.75, 1.25) -- (8.25, 1.25);
\draw [red, thick] (7.75, 1.75) -- (8.25, 1.75);
\draw [red, thick] (8.75, 1.75) -- (9.25, 1.75);
\draw [red, thick] (7.75, 2.25) -- (8.25, 2.25);
\draw [red, thick] (8.75, 2.25) -- (9.25, 2.25);
\draw [red, thick] (7.75, 2.75) -- (8.25, 2.75);
\draw [red, thick] (8.75, 2.75) -- (9.25, 2.75);
\draw [red, thick] (7.25, 0.75) -- (7.25, 1.25);
\draw [red, thick] (7.75, 0.75) -- (7.75, 1.25);
\draw [red, thick] (8.75, 0.75) -- (8.75, 1.25);
\draw [red, thick] (9.25, 0.75) -- (9.25, 1.25);
\draw [red, thick] (8.25, 1.75) -- (8.25, 2.25);
\draw [red, thick] (8.75, 1.75) -- (8.75, 2.25);

\draw [blue, thick] (8.25, 0.25) -- (8.75, 0.25);
\draw [blue, thick] (7.25, 0.75) -- (7.75, 0.75);
\draw [blue, thick] (8.25, 0.75) -- (8.75, 0.75);
\draw [blue, thick] (7.25, 1.25) -- (7.75, 1.25);
\draw [blue, thick] (8.25, 1.75) -- (8.75, 1.75);
\draw [blue, thick] (8.25, 2.25) -- (8.75, 2.25);
\draw [blue, thick] (7.75, 0.25) -- (7.75, 0.75);
\draw [blue, thick] (9.25, 0.25) -- (9.25, 0.75);
\draw [blue, thick] (7.75, 1.25) -- (7.75, 1.75);
\draw [blue, thick] (8.25, 1.25) -- (8.25, 1.75);
\draw [blue, thick] (8.75, 1.25) -- (8.75, 1.75);
\draw [blue, thick] (9.25, 1.25) -- (9.25, 1.75);
\draw [blue, thick] (7.75, 2.25) -- (7.75, 2.75);
\draw [blue, thick] (8.25, 2.25) -- (8.25, 2.75);
\draw [blue, thick] (8.75, 2.25) -- (8.75, 2.75);
\draw [blue, thick] (9.25, 2.25) -- (9.25, 2.75);

\draw [OliveGreen, thin, dashed] (7.25, 0.25) -- (7.75, 0.25);
\draw [OliveGreen, thin, dashed] (9.25, 0.25) -- (9.75, 0.25);
\draw [OliveGreen, thin, dashed] (9.25, 0.75) -- (9.75, 0.75);
\draw [OliveGreen, thin, dashed] (8.25, 1.25) -- (8.75, 1.25);
\draw [OliveGreen, thin, dashed] (9.25, 1.25) -- (9.75, 1.25);
\draw [OliveGreen, thin, dashed] (7.25, 1.75) -- (7.75, 1.75);
\draw [OliveGreen, thin, dashed] (9.25, 1.75) -- (9.75, 1.75);
\draw [OliveGreen, thin, dashed] (7.25, 2.25) -- (7.75, 2.25);
\draw [OliveGreen, thin, dashed] (9.25, 2.25) -- (9.75, 2.25);
\draw [OliveGreen, thin, dashed] (7.25, 2.75) -- (7.75, 2.75);
\draw [OliveGreen, thin, dashed] (8.25, 2.75) -- (8.75, 2.75);
\draw [OliveGreen, thin, dashed] (9.25, 2.75) -- (9.75, 2.75);
\draw [OliveGreen, thin, dashed] (7.25, 0.25) -- (7.25, 0.75);
\draw [OliveGreen, thin, dashed] (8.25, 0.25) -- (8.25, 0.75);
\draw [OliveGreen, thin, dashed] (8.75, 0.25) -- (8.75, 0.75);
\draw [OliveGreen, thin, dashed] (9.75, 0.25) -- (9.75, 0.75);
\draw [OliveGreen, thin, dashed] (7.25, 1.25) -- (7.25, 1.75);
\draw [OliveGreen, thin, dashed] (9.75, 1.25) -- (9.75, 1.75);
\draw [OliveGreen, thin, dashed] (7.25, 2.25) -- (7.25, 2.75);
\draw [OliveGreen, thin, dashed] (9.75, 2.25) -- (9.75, 2.75);

\node at (3.25,1.5) {$+$};

\node at (6.75,1.5) {$=$};

        \end{tikzpicture}
        \caption{Obtaining the reduced form of a $6 \times 6$ cycle. Segments where red and blue segments were cancelled are shown in dashed green on the right.}
        \label{reducedform}
    \end{figure}
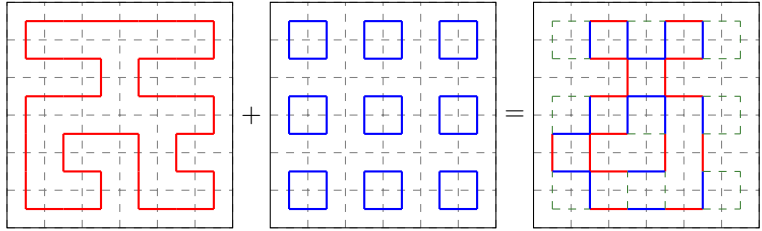
If the reduced form intersects itself, treat it as two straight double-segments over the same cell. For example, the reduced form shown in Figure \ref{reducedform} has $14$ turns; we do not count cells which act as an endpoint of four segments as turns.
\begin{lemma}\label{unionofcycles}
The reduced form is a union of cycles.
\end{lemma}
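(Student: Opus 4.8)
The plan is to read the reduced form as a symmetric difference of two edge sets and exploit the fact that symmetric difference preserves the parity of every vertex degree. First I would record the two ingredients as \emph{even subgraphs}, i.e.\ subgraphs in which every cell has even degree. The Hamilton cycle $H$ is $2$-regular by definition. The square overlay $O$ is also $2$-regular: since $G(n,n)$ admits a Hamilton cycle we must have $n$ even, so the unit squares $\{(2i-1,2j-1),(2i,2j-1),(2i,2j),(2i-1,2j)\}$ for $1\le i,j\le \tfrac n2$ partition the cells, each cell lying on exactly one overlay square and hence having degree exactly $2$ in $O$.

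The heart of the argument is the elementary observation that the symmetric difference of two even subgraphs is again even. Writing $R = H \triangle O$ for the reduced form, for each cell $v$ we have
$$\deg_R(v) = \deg_H(v) + \deg_O(v) - 2\deg_{H \cap O}(v) = 4 - 2\deg_{H \cap O}(v),$$
which is even; since $\deg_{H\cap O}(v)\in\{0,1,2\}$, every cell has degree $0$, $2$, or $4$ in $R$. I would then invoke the standard fact that a subgraph in which every vertex has even degree decomposes into edge-disjoint cycles: pairing up the edges incident to each vertex and following the pairing traces out strands that, on a finite graph, can only terminate by closing up. Cells of degree $0$ are simply isolated and belong to no strand, which is consistent with $R$ being a union of cycles.

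The one genuine subtlety — and the step I expect to require the most care — is the treatment of degree-$4$ cells, which is precisely where the paper's convention ``two straight double-segments over the same cell'' intervenes. At such a cell all four incident edges are present, and I would resolve the vertex by pairing the two collinear edges (up with down, left with right), so that the cell is traversed by two straight strands rather than interpreted as a single branching point; this is exactly the convention under which such cells are declared non-turns. I would then check that, once every degree-$4$ cell has been split in this way, the edge-pairing at each used vertex is a fixed-point-free involution, so each strand is an honest closed curve and the strands are edge-disjoint. This exhibits $R$ as a disjoint union of cycles, completing the proof.
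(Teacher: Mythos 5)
Your proof is correct and follows essentially the same route as the paper: the paper's one-line argument is exactly the parity observation that symmetric difference with a set of cycles preserves each cell's degree modulo $2$, hence every cell has even degree and the reduced form decomposes into cycles. Your additional bookkeeping (the explicit formula $\deg_R(v)=4-2\deg_{H\cap O}(v)$ and the resolution of degree-$4$ cells into two straight strands) just makes explicit the convention the paper states before the lemma, so there is no substantive difference.
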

\begin{proof}
The number of segments going into or out from any cell $c$ is invariant modulo $2$ under the symmetric difference with any set of cycles, and hence all cells have even degree.
\end{proof}
\begin{lemma}\label{turnstostraights}
A turn in the reduced form creates a straight in the original cycle.
\end{lemma}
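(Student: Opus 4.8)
The plan is to prove the contrapositive cell by cell: at every cell, if the original Hamilton cycle turns there, then the reduced form does not turn there. Since the original cycle and the reduced form occupy exactly the same cells, this immediately gives that any turn of the reduced form must sit at a cell where the original cycle goes straight, which is the statement of the lemma.

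First I would record the two structural facts that drive everything. At each cell $c$ the original Hamilton cycle uses exactly two of the (at most four) incident grid edges, so $c$ carries two segments that are either collinear (a straight) or perpendicular (a turn). Similarly, in the square overlay every cell lies on exactly one of the $2\times 2$ cycles and is a corner of it, so the overlay also uses exactly two incident edges at $c$, one horizontal and one vertical; hence the overlay is a turn at \emph{every} cell. Writing $U,D$ for the two vertical incident edges and $L,R$ for the two horizontal incident edges, the overlay edge set at $c$ is one of $\{U,L\}$, $\{U,R\}$, $\{D,L\}$, $\{D,R\}$, and by definition the reduced form at $c$ is the symmetric difference of the original edge set with this overlay set.

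The core of the argument is then a finite case analysis, assuming the original cycle turns at $c$ so that its edge set is itself one of the four turn sets above. If the original turn coincides with the overlay turn, the symmetric difference is empty and $c$ carries no segment of the reduced form. If the two turns differ but share one edge — same vertical with different horizontals, or same horizontal with different verticals — the symmetric difference is $\{L,R\}$ or $\{U,D\}$, a pair of collinear edges, so the reduced form passes straight through $c$. If instead the two turns are opposite and share no edge, the symmetric difference is all four incident edges, a degree-$4$ cell, which by the stated convention is read as two straight double-segments and is not counted as a turn. In every case the reduced form fails to turn at $c$, establishing the contrapositive.

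I do not expect a genuine obstacle here, since the statement is entirely local and reduces to a short check once the two observations (both the cycle and the overlay have degree two at each cell, and the overlay is always a turn) are in hand. The only point that needs care is the degree-$4$ case, where one must explicitly invoke the convention that such a self-intersection is treated as two straights rather than a turn; this is precisely why the lemma is phrased in terms of turns of the reduced form rather than its degree-two cells alone.
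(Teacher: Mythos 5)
Your proof is correct and takes essentially the same route as the paper: both come down to the identical local case analysis at a single cell, comparing the turn there with the overlay's turn (equal turns give the empty set, turns sharing one edge give a straight, opposite turns give a degree-four cell), with the paper organizing this as a contradiction with Hamiltonicity and you as a contrapositive invoking the degree-four convention. One small correction: the original cycle and the reduced form do \emph{not} ``occupy exactly the same cells'' (the reduced form vanishes at cells where the cycle agrees with the overlay); what your argument actually needs --- and what holds trivially because the cycle is Hamiltonian --- is that the original cycle passes through every cell at which the reduced form turns, so that ``does not turn there'' indeed means ``goes straight there.''
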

\begin{proof}
Consider the position of the turn, relative to a single cycle in the square overlay. If it does not create a straight, then it can be easily be checked that the original cycle is not Hamiltonian; it will either cross over some cell twice, or not contain some cell.
\end{proof}

Let us view $G(n,n)$ as an $\frac n2 \times \frac n2$ grid of $2 \times 2$ squares. For all cycles in the reduced form, round their vertices to the centre of the nearest $2 \times 2$ square. This gives a union of cycles (some of which may turn twice in the same cell or run over the same segment twice) on $G\left(\frac n2, \frac n2 \right)$; call this the {\it half-form}.

    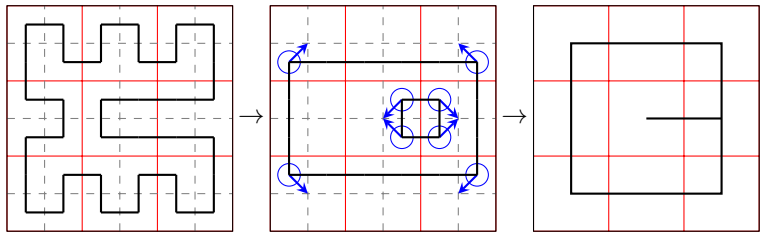
\begin{figure}[H]
        \centering
        \begin{tikzpicture}
\draw[step=1cm,red,thin] (0,0) grid (3,3.0);
\draw[step=1cm,gray,thin,dashed,shift={(-0.5,-0.5)}] (0.5,0.5) grid (3.5,3.5);
\draw [black, thin] (0, 0) -- (3, 0) -- (3, 3) -- (0, 3) -- cycle;
\draw[step=1cm,red,thin,shift={(0.5,0)}] (3,0) grid (6,3);
\draw[step=1cm,gray,thin,dashed,shift={(0,-0.5)}] (3.5,0.5) grid (6.5,3.5);
\draw [black, thin] (3.5, 0) -- (6.5, 0) -- (6.5, 3) -- (3.5, 3) -- cycle;
\draw[step=1cm,red,thin] (7,0) grid (10,3);

\draw [blue,thin] (3.75, 0.75) circle (0.15);
\draw [blue,thick,->,>=stealth] (3.75,0.75) -- (4,0.5);
\draw [blue,thin] (6.25, 0.75) circle (0.15);
\draw [blue,thick,->,>=stealth] (6.25,0.75) -- (6,0.5);
\draw [blue,thin] (5.25, 1.25) circle (0.15);
\draw [blue,thick,->,>=stealth] (5.25,1.25) -- (5,1.5);
\draw [blue,thin] (5.75, 1.25) circle (0.15);
\draw [blue,thick,->,>=stealth] (5.75,1.25) -- (6,1.5);
\draw [blue,thin] (5.25, 1.75) circle (0.15);
\draw [blue,thick,->,>=stealth] (5.25,1.75) -- (5,1.5);
\draw [blue,thin] (5.75, 1.75) circle (0.15);
\draw [blue,thick,->,>=stealth] (5.75,1.75) -- (6,1.5);
\draw [blue,thin] (3.75, 2.25) circle (0.15);
\draw [blue,thick,->,>=stealth] (3.75,2.25) -- (4,2.5);
\draw [blue,thin] (6.25, 2.25) circle (0.15);
\draw [blue,thick,->,>=stealth] (6.25,2.25) -- (6,2.5);

\draw [black, thin] (7, 0) -- (10, 0) -- (10, 3) -- (7, 3) -- cycle;
\draw [black, thick] (0.25, 0.25) -- (0.75, 0.25);
\draw [black, thick] (1.25, 0.25) -- (1.75, 0.25);
\draw [black, thick] (2.25, 0.25) -- (2.75, 0.25);
\draw [black, thick] (0.75, 0.75) -- (1.25, 0.75);
\draw [black, thick] (1.75, 0.75) -- (2.25, 0.75);
\draw [black, thick] (3.75, 0.75) -- (4.25, 0.75);
\draw [black, thick] (4.25, 0.75) -- (4.75, 0.75);
\draw [black, thick] (4.75, 0.75) -- (5.25, 0.75);
\draw [black, thick] (5.25, 0.75) -- (5.75, 0.75);
\draw [black, thick] (5.75, 0.75) -- (6.25, 0.75);
\draw [black, thick] (0.25, 1.25) -- (0.75, 1.25);
\draw [black, thick] (1.25, 1.25) -- (1.75, 1.25);
\draw [black, thick] (1.75, 1.25) -- (2.25, 1.25);
\draw [black, thick] (2.25, 1.25) -- (2.75, 1.25);
\draw [black, thick] (5.25, 1.25) -- (5.75, 1.25);
\draw [black, thick] (0.25, 1.75) -- (0.75, 1.75);
\draw [black, thick] (1.25, 1.75) -- (1.75, 1.75);
\draw [black, thick] (1.75, 1.75) -- (2.25, 1.75);
\draw [black, thick] (2.25, 1.75) -- (2.75, 1.75);
\draw [black, thick] (5.25, 1.75) -- (5.75, 1.75);
\draw [black, thick] (0.75, 2.25) -- (1.25, 2.25);
\draw [black, thick] (1.75, 2.25) -- (2.25, 2.25);
\draw [black, thick] (3.75, 2.25) -- (4.25, 2.25);
\draw [black, thick] (4.25, 2.25) -- (4.75, 2.25);
\draw [black, thick] (4.75, 2.25) -- (5.25, 2.25);
\draw [black, thick] (5.25, 2.25) -- (5.75, 2.25);
\draw [black, thick] (5.75, 2.25) -- (6.25, 2.25);
\draw [black, thick] (0.25, 2.75) -- (0.75, 2.75);
\draw [black, thick] (1.25, 2.75) -- (1.75, 2.75);
\draw [black, thick] (2.25, 2.75) -- (2.75, 2.75);
\draw [black, thick] (0.25, 0.25) -- (0.25, 0.75);
\draw [black, thick] (0.75, 0.25) -- (0.75, 0.75);
\draw [black, thick] (1.25, 0.25) -- (1.25, 0.75);
\draw [black, thick] (1.75, 0.25) -- (1.75, 0.75);
\draw [black, thick] (2.25, 0.25) -- (2.25, 0.75);
\draw [black, thick] (2.75, 0.25) -- (2.75, 0.75);
\draw [black, thick] (0.25, 0.75) -- (0.25, 1.25);
\draw [black, thick] (2.75, 0.75) -- (2.75, 1.25);
\draw [black, thick] (3.75, 0.75) -- (3.75, 1.25);
\draw [black, thick] (6.25, 0.75) -- (6.25, 1.25);
\draw [black, thick] (0.75, 1.25) -- (0.75, 1.75);
\draw [black, thick] (1.25, 1.25) -- (1.25, 1.75);
\draw [black, thick] (3.75, 1.25) -- (3.75, 1.75);
\draw [black, thick] (5.25, 1.25) -- (5.25, 1.75);
\draw [black, thick] (5.75, 1.25) -- (5.75, 1.75);
\draw [black, thick] (6.25, 1.25) -- (6.25, 1.75);
\draw [black, thick] (0.25, 1.75) -- (0.25, 2.25);
\draw [black, thick] (2.75, 1.75) -- (2.75, 2.25);
\draw [black, thick] (3.75, 1.75) -- (3.75, 2.25);
\draw [black, thick] (6.25, 1.75) -- (6.25, 2.25);
\draw [black, thick] (0.25, 2.25) -- (0.25, 2.75);
\draw [black, thick] (0.75, 2.25) -- (0.75, 2.75);
\draw [black, thick] (1.25, 2.25) -- (1.25, 2.75);
\draw [black, thick] (1.75, 2.25) -- (1.75, 2.75);
\draw [black, thick] (2.25, 2.25) -- (2.25, 2.75);
\draw [black, thick] (2.75, 2.25) -- (2.75, 2.75);

\draw [black, thick] (7.5, 0.5) -- (9.5, 0.5) -- (9.5, 2.5) -- (7.5, 2.5) -- cycle;
\draw [black, thick] (8.5,1.5) -- (9.5,1.5);

\node at (3.25,1.5) {$\to$};

\node at (6.75,1.5) {$\to$};

        \end{tikzpicture}
        \caption{Converting a Hamilton cycle of $G(6,6)$ into its reduced form, and then into its half-form. In the half-form, the centre cell and the cell to its right have two turns each, the segment between them is traversed twice.}
        \label{half-form_example}
    \end{figure}

\begin{lemma}\label{hamiltonianhalfform}
The half form goes through every cell.
\end{lemma}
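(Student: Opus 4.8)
The plan is to translate the statement ``the half-form goes through every cell'' into a statement about edges of the reduced form crossing between adjacent $2\times 2$ squares, and then to reduce it back to the original Hamilton cycle by a short counting argument.

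First I would set up the translation. When we round the reduced form to the half-form, an edge of the reduced form whose two endpoints lie in the same $2\times 2$ square collapses to the centre of that square, while an edge whose endpoints lie in two adjacent squares becomes an edge of the half-form joining the two corresponding centres. Thus the half-form passes through the cell of $G(\frac n2,\frac n2)$ corresponding to a square $S$ precisely when the reduced form contains at least one edge crossing the boundary of $S$; indeed, the degree of that cell in the half-form equals the number of such crossing edges. So the lemma reduces to: every $2\times 2$ square has a reduced-form edge leaving it.

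The key observation is that the square overlay consists only of the four boundary edges of each $2\times 2$ square, so it contains no edge crossing between two distinct squares. Since the reduced form is the symmetric difference of the Hamilton cycle $H$ with the overlay, an edge crossing the boundary of $S$ lies in the reduced form if and only if it lies in $H$. Hence it suffices to show that, for every $2\times 2$ square $S$, the cycle $H$ contains at least one edge leaving $S$.

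This last step is a counting/connectivity argument. Each of the four cells of $S$ has degree exactly $2$ in $H$, contributing $8$ edge-endpoints in total; if $k$ denotes the number of edges of $H$ internal to $S$, then the number of edges of $H$ crossing the boundary of $S$ is $8 - 2k$. Since $S$ has only four internal edges we have $k \le 4$, and $k = 4$ would force the four cells of $S$ to form a separate $4$-cycle, contradicting that $H$ is a single cycle on $n^2 > 4$ vertices; so $k \le 3$ and there are at least $8 - 2\cdot 3 = 2$ crossing edges, whence $S$ is visited. The main thing to get right is the bookkeeping that the overlay contributes no crossing edges (so the crossing edges of the reduced form and of $H$ coincide), together with checking that the connectivity argument applies uniformly to boundary and corner squares; there the same count $8-2k$ holds, and the impossibility of $k=4$ again rules out an isolated component, so no separate casework is needed.
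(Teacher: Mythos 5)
Your proposal is correct and is essentially the paper's own argument: both hinge on the observation that the square overlay has no edges between distinct $2\times 2$ squares, so a square with no crossing edge of the reduced form would force the original Hamilton cycle to contain that square's $2\times 2$ cycle as a separate component, contradicting Hamiltonicity. Your explicit count of $8-2k$ crossing edges is just a more detailed, direct rendering of the same connectivity argument that the paper states in contrapositive form.
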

\begin{proof}
If it does not pass through a cell, then the reduced form does not pass through a $2 \times 2$ area, and the original cycle has two components (one of which is a $2 \times 2$ cycle).
\end{proof}

By Corollary \ref{2turns}, there are at least $2 \times \frac n2 = n$ turns in the half-form, and hence at least $n$ straights in the original cycle.

Motivated by the constructions given in Claim \ref{minconstruct}, it is in fact possible construct many cycles of an $n \times n$ grid with exactly $n$ straights in the case that $4 | n$, by converting a cycle with minimal turns from the half-form into a reduced form. We demonstrate this below.

\begin{claim}\label{square4|n}
If $4|n$, the maximal number of turns of any Hamilton cycle of $G(n,n)$ is exactly $n^2 - n$.
\end{claim}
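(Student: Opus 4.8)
One direction of the bound is already available. By Lemma \ref{hamiltonianhalfform} the half-form of any Hamilton cycle covers every cell of $G(\frac n2,\frac n2)$, and the proof of Lemma \ref{minab} uses only Corollary \ref{2turns} and Lemma \ref{evenrowcol}, both of which hold for an arbitrary union of cycles covering all cells; hence the half-form has at least $2\cdot\frac n2=n$ turns. Each such turn produces a straight in the original cycle by Lemma \ref{turnstostraights}, so every Hamilton cycle of $G(n,n)$ has at least $n$ straights and therefore at most $n^2-n$ turns. The substance of the claim is thus the matching construction: a Hamilton cycle with exactly $n$ straights.

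My plan is to reverse the reduction. As $4\mid n$, the side length $\frac n2$ is even, so Claim \ref{minconstruct} gives a Hamilton cycle of $G(\frac n2,\frac n2)$ with exactly $n=2\cdot\frac n2$ turns; I take this pronged cycle as a prescribed half-form. I then lift it to a reduced form on $G(n,n)$ — each cell of $G(\frac n2,\frac n2)$ together with its incident half-form edges dictates a configuration of segments inside the corresponding $2\times 2$ block — and take the symmetric difference with the square overlay to recover a candidate cycle $H$. Since passing to the reduced form is reversible, $H$ has all vertices of degree $2$; two things must then be verified: that $H$ has exactly $n$ straights, and that $H$ is connected.

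The count is forced by a local computation. At any cell the overlay contributes two adjacent edges, and the symmetric difference of two \emph{opposite} edges with two \emph{adjacent} edges is always a pair of adjacent edges, whereas the symmetric difference of two adjacent edges with two adjacent edges is never such a pair. Hence a cell is a straight of $H$ if and only if it carries a turn of the reduced form, so straights of $H$ are in bijection with turns of the reduced form (equivalently, of the half-form). Arranging the lift to preserve the turn count at $n$ therefore yields exactly $n$ straights, matching the upper bound.

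The step I expect to be the main obstacle is connectivity. A single spanning cycle in the half-form need not lift to a single cycle in $H$: the lift is determined only up to local choices inside each $2\times 2$ block, and a bad choice can leave an overlay square uncancelled (a stray $2\times 2$ loop) or split $H$ into several large cycles. The real work is to fix a lift of the regular comb-shaped minimal cycle for which the symmetric difference with the overlay threads all the $2\times 2$ squares into one cycle; I expect this is cleanest to carry out by exhibiting the resulting Hamilton cycle explicitly and reading off both its connectedness and its $n$ straight cells directly from the picture.
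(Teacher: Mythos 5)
Your upper bound and your overall strategy (reverse the reduction, starting from a minimal-turn cycle on $G(\frac n2,\frac n2)$ supplied by Claim \ref{minconstruct}) are exactly the paper's, and your local computation showing that straights of $H$ correspond to turns of the reduced form is sound. But there is a genuine gap at precisely the point you flag as ``the main obstacle'': you never produce the lift. The claim is for every $n$ divisible by $4$, so ``exhibiting the resulting Hamilton cycle explicitly and reading off the picture'' cannot close the argument --- a picture settles one $n$ at a time, and the local choices inside the $2\times2$ blocks are exactly where the difficulty lives, since a bad choice leaves an uncancelled overlay square or splits the symmetric difference into several cycles. Deferring that to an unexhibited figure is a statement of intent, not a proof; as written, the construction half of the claim is missing.

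The paper fills this gap with a uniform, orientation-based recipe. Take the minimal cycle $C$ on $G\left(\frac n2,\frac n2\right)$, dilate by a factor of $2$ to get $C'$ in $G(n,n)$, orient $C'$ clockwise, and then move each turn one unit: decrease its $x$-coordinate by $1$ if its vertical segment is oriented up, and decrease its $y$-coordinate by $1$ if its horizontal segment is oriented right. Consecutive turns retain a common row or column (they agree in horizontal or vertical orientation), so this produces a genuine cycle $C''$, and by construction every turn of $C''$ shares exactly one segment with the square overlay. Taking the symmetric difference with the overlay then turns each of the $n$ turns of $C''$ into a straight, and the result is Hamiltonian because it threads the $2\times2$ overlay squares together in the same cyclic order in which $C''$ visits them. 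Some such mechanism, valid for all $n\equiv 0 \pmod 4$ simultaneously, is what your proposal needs and lacks.
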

\begin{proof}
From the previous discussion, there are at least $n$ straights. It remains to show that this is attainable.

Consider any Hamilton cycle $C$ of $G\left(\frac n2, \frac n2 \right)$, which has exactly $n$ turns (existence of such a cycle was proven in Claim \ref{minconstruct}). This cycle will be the half-form of the final cycle.

    \begin{figure}[H]
        \centering
        \begin{tikzpicture}
\draw[step=1cm,gray,thin,dashed] (0,0) grid (6,6);
\draw [black, thin] (0, 0) -- (6, 0) -- (6, 6) -- (0, 6) -- cycle;
\draw [black, thick] (0.5, 0.5) -- (1.5, 0.5);
\draw [black, thick] (1.5, 0.5) -- (2.5, 0.5);
\draw [black, thick] (2.5, 0.5) -- (3.5, 0.5);
\draw [black, thick] (3.5, 0.5) -- (4.5, 0.5);
\draw [black, thick] (4.5, 0.5) -- (5.5, 0.5);
\draw [black, thick] (0.5, 1.5) -- (1.5, 1.5);
\draw [black, thick] (1.5, 1.5) -- (2.5, 1.5);
\draw [black, thick] (2.5, 1.5) -- (3.5, 1.5);
\draw [black, thick] (3.5, 1.5) -- (4.5, 1.5);
\draw [black, thick] (0.5, 2.5) -- (1.5, 2.5);
\draw [black, thick] (1.5, 2.5) -- (2.5, 2.5);
\draw [black, thick] (2.5, 2.5) -- (3.5, 2.5);
\draw [black, thick] (3.5, 2.5) -- (4.5, 2.5);
\draw [black, thick] (1.5, 3.5) -- (2.5, 3.5);
\draw [black, thick] (2.5, 3.5) -- (3.5, 3.5);
\draw [black, thick] (3.5, 3.5) -- (4.5, 3.5);
\draw [black, thick] (4.5, 3.5) -- (5.5, 3.5);
\draw [black, thick] (1.5, 4.5) -- (2.5, 4.5);
\draw [black, thick] (2.5, 4.5) -- (3.5, 4.5);
\draw [black, thick] (3.5, 4.5) -- (4.5, 4.5);
\draw [black, thick] (4.5, 4.5) -- (5.5, 4.5);
\draw [black, thick] (0.5, 5.5) -- (1.5, 5.5);
\draw [black, thick] (1.5, 5.5) -- (2.5, 5.5);
\draw [black, thick] (2.5, 5.5) -- (3.5, 5.5);
\draw [black, thick] (3.5, 5.5) -- (4.5, 5.5);
\draw [black, thick] (4.5, 5.5) -- (5.5, 5.5);
\draw [black, thick] (0.5, 0.5) -- (0.5, 1.5);
\draw [black, thick] (5.5, 0.5) -- (5.5, 1.5);
\draw [black, thick] (4.5, 1.5) -- (4.5, 2.5);
\draw [black, thick] (5.5, 1.5) -- (5.5, 2.5);
\draw [black, thick] (0.5, 2.5) -- (0.5, 3.5);
\draw [black, thick] (5.5, 2.5) -- (5.5, 3.5);
\draw [black, thick] (0.5, 3.5) -- (0.5, 4.5);
\draw [black, thick] (1.5, 3.5) -- (1.5, 4.5);
\draw [black, thick] (0.5, 4.5) -- (0.5, 5.5);
\draw [black, thick] (5.5, 4.5) -- (5.5, 5.5);

        \end{tikzpicture}
        \caption{A cycle on $G(6,6)$ with $12$ turns, the minimum possible.}
        \label{6x6min}
    \end{figure}
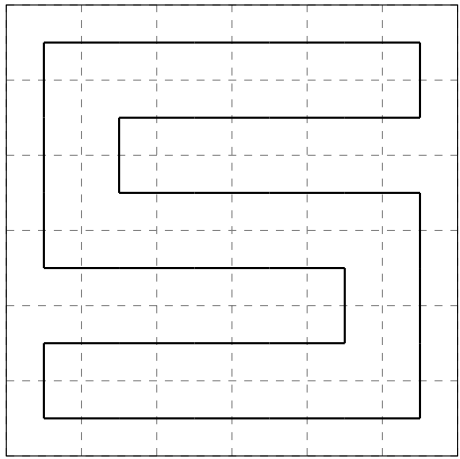

We now dilate $C$ from $(0,0)$ by a factor of $2$. This yields a cycle $C'$ in $G(n,n)$. Orient $C'$ clockwise. Every turn has a vertical and a horizontal segment. The orientation assigns each vertex one of four types, depending on whether the vertical segment is oriented up or down, and whether the horizontal segment is oriented left or right.

For each turn, reduce its $x$-coordinate by $1$ if its vertical segment is oriented up, and reduce its $y$-coordinate by $1$ if its horizontal segment is oriented right. Adjacent turns in still are in the same row or column, since either their horizontal or vertical orientation are the same. So this process gives a new cycle, $C''$, whose turns each share exactly one segment with the square overlay of the $n \times n$ grid.
    \begin{figure}[H]
        \centering
        \begin{tikzpicture}
\draw[step=0.5cm,gray,thin,dashed] (0,0) grid (6.0,6.0);
\draw [black, thin] (0, 0) -- (6.0, 0) -- (6.0, 6.0) -- (0, 6.0) -- cycle;
\draw [black, thick] (0.25, 0.75) -- (0.75, 0.75);
\draw [black, thick] (0.75, 0.75) -- (1.25, 0.75);
\draw [black, thick] (1.25, 0.75) -- (1.75, 0.75);
\draw [black, thick] (1.75, 0.75) -- (2.25, 0.75);
\draw [black, thick] (2.25, 0.75) -- (2.75, 0.75);
\draw [black, thick] (2.75, 0.75) -- (3.25, 0.75);
\draw [black, thick] (3.25, 0.75) -- (3.75, 0.75);
\draw [black, thick] (3.75, 0.75) -- (4.25, 0.75);
\draw [black, thick] (4.25, 0.75) -- (4.75, 0.75);
\draw [black, thick] (4.75, 0.75) -- (5.25, 0.75);
\draw [black, thick] (5.25, 0.75) -- (5.75, 0.75);
\draw [black, thick] (0.25, 1.25) -- (0.75, 1.25);
\draw [black, thick] (0.75, 1.25) -- (1.25, 1.25);
\draw [black, thick] (1.25, 1.25) -- (1.75, 1.25);
\draw [black, thick] (1.75, 1.25) -- (2.25, 1.25);
\draw [black, thick] (2.25, 1.25) -- (2.75, 1.25);
\draw [black, thick] (2.75, 1.25) -- (3.25, 1.25);
\draw [black, thick] (3.25, 1.25) -- (3.75, 1.25);
\draw [black, thick] (3.75, 1.25) -- (4.25, 1.25);
\draw [black, thick] (0.25, 2.75) -- (0.75, 2.75);
\draw [black, thick] (0.75, 2.75) -- (1.25, 2.75);
\draw [black, thick] (1.25, 2.75) -- (1.75, 2.75);
\draw [black, thick] (1.75, 2.75) -- (2.25, 2.75);
\draw [black, thick] (2.25, 2.75) -- (2.75, 2.75);
\draw [black, thick] (2.75, 2.75) -- (3.25, 2.75);
\draw [black, thick] (3.25, 2.75) -- (3.75, 2.75);
\draw [black, thick] (3.75, 2.75) -- (4.25, 2.75);
\draw [black, thick] (1.75, 3.25) -- (2.25, 3.25);
\draw [black, thick] (2.25, 3.25) -- (2.75, 3.25);
\draw [black, thick] (2.75, 3.25) -- (3.25, 3.25);
\draw [black, thick] (3.25, 3.25) -- (3.75, 3.25);
\draw [black, thick] (3.75, 3.25) -- (4.25, 3.25);
\draw [black, thick] (4.25, 3.25) -- (4.75, 3.25);
\draw [black, thick] (4.75, 3.25) -- (5.25, 3.25);
\draw [black, thick] (5.25, 3.25) -- (5.75, 3.25);
\draw [black, thick] (1.75, 4.75) -- (2.25, 4.75);
\draw [black, thick] (2.25, 4.75) -- (2.75, 4.75);
\draw [black, thick] (2.75, 4.75) -- (3.25, 4.75);
\draw [black, thick] (3.25, 4.75) -- (3.75, 4.75);
\draw [black, thick] (3.75, 4.75) -- (4.25, 4.75);
\draw [black, thick] (4.25, 4.75) -- (4.75, 4.75);
\draw [black, thick] (4.75, 4.75) -- (5.25, 4.75);
\draw [black, thick] (5.25, 4.75) -- (5.75, 4.75);
\draw [black, thick] (0.25, 5.25) -- (0.75, 5.25);
\draw [black, thick] (0.75, 5.25) -- (1.25, 5.25);
\draw [black, thick] (1.25, 5.25) -- (1.75, 5.25);
\draw [black, thick] (1.75, 5.25) -- (2.25, 5.25);
\draw [black, thick] (2.25, 5.25) -- (2.75, 5.25);
\draw [black, thick] (2.75, 5.25) -- (3.25, 5.25);
\draw [black, thick] (3.25, 5.25) -- (3.75, 5.25);
\draw [black, thick] (3.75, 5.25) -- (4.25, 5.25);
\draw [black, thick] (4.25, 5.25) -- (4.75, 5.25);
\draw [black, thick] (4.75, 5.25) -- (5.25, 5.25);
\draw [black, thick] (5.25, 5.25) -- (5.75, 5.25);
\draw [black, thick] (0.25, 0.75) -- (0.25, 1.25);
\draw [black, thick] (5.75, 0.75) -- (5.75, 1.25);
\draw [black, thick] (4.25, 1.25) -- (4.25, 1.75);
\draw [black, thick] (5.75, 1.25) -- (5.75, 1.75);
\draw [black, thick] (4.25, 1.75) -- (4.25, 2.25);
\draw [black, thick] (5.75, 1.75) -- (5.75, 2.25);
\draw [black, thick] (4.25, 2.25) -- (4.25, 2.75);
\draw [black, thick] (5.75, 2.25) -- (5.75, 2.75);
\draw [black, thick] (0.25, 2.75) -- (0.25, 3.25);
\draw [black, thick] (5.75, 2.75) -- (5.75, 3.25);
\draw [black, thick] (0.25, 3.25) -- (0.25, 3.75);
\draw [black, thick] (1.75, 3.25) -- (1.75, 3.75);
\draw [black, thick] (0.25, 3.75) -- (0.25, 4.25);
\draw [black, thick] (1.75, 3.75) -- (1.75, 4.25);
\draw [black, thick] (0.25, 4.25) -- (0.25, 4.75);
\draw [black, thick] (1.75, 4.25) -- (1.75, 4.75);
\draw [black, thick] (0.25, 4.75) -- (0.25, 5.25);
\draw [black, thick] (5.75, 4.75) -- (5.75, 5.25);

        \end{tikzpicture}
        \caption{The cycle $C''$, taking the cycle from Figure \ref{6x6min} as $C$.}
        \label{6x6C''}
    \end{figure}
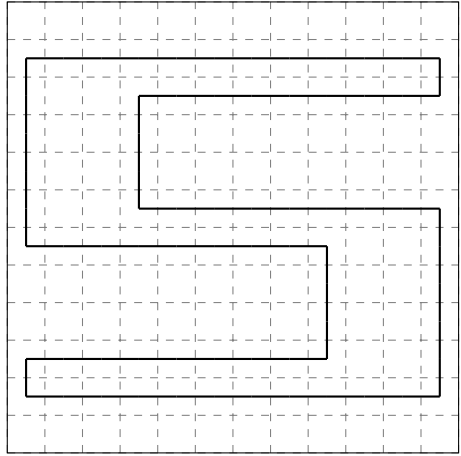

We now take the symmetric difference of $C''$ and a square overlay of $G(n,n)$, to obtain a final cycle $H$. Every turn in $C''$ yields a straight in $H$, and $H$ is Hamiltonian --- it joins the $2 \times 2$ squares that form the $n \times n$ grid in the same order that $C''$ does.
    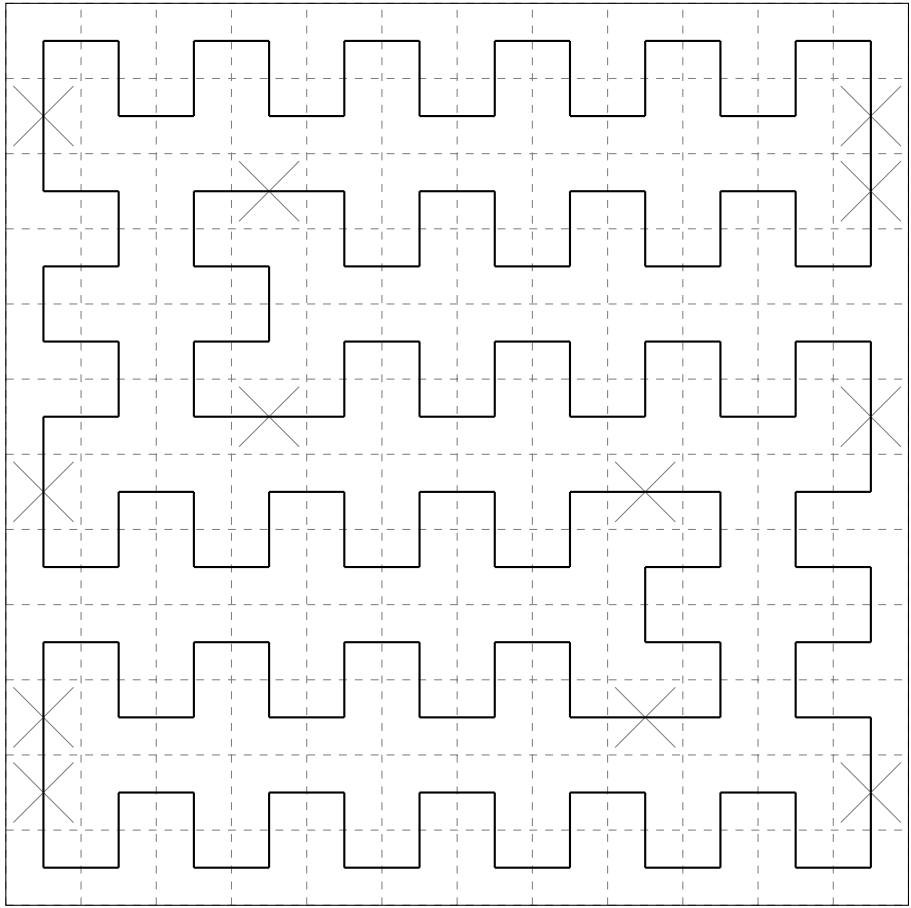
\begin{figure}[H]
        \centering
        \begin{tikzpicture}
\draw[step=1cm,gray,thin,dashed] (0,0) grid (12,12);
\draw [black, thin] (0, 0) -- (12, 0) -- (12, 12) -- (0, 12) -- cycle;
\draw [gray, thin] (0.1, 1.1) -- (0.9, 1.9);
\draw [gray, thin] (0.9, 1.1) -- (0.1, 1.9);
\draw [gray, thin] (11.1, 1.1) -- (11.9, 1.9);
\draw [gray, thin] (11.9, 1.1) -- (11.1, 1.9);
\draw [gray, thin] (0.1, 2.1) -- (0.9, 2.9);
\draw [gray, thin] (0.9, 2.1) -- (0.1, 2.9);
\draw [gray, thin] (8.1, 2.1) -- (8.9, 2.9);
\draw [gray, thin] (8.9, 2.1) -- (8.1, 2.9);
\draw [gray, thin] (0.1, 5.1) -- (0.9, 5.9);
\draw [gray, thin] (0.9, 5.1) -- (0.1, 5.9);
\draw [gray, thin] (8.1, 5.1) -- (8.9, 5.9);
\draw [gray, thin] (8.9, 5.1) -- (8.1, 5.9);
\draw [gray, thin] (3.1, 6.1) -- (3.9, 6.9);
\draw [gray, thin] (3.9, 6.1) -- (3.1, 6.9);
\draw [gray, thin] (11.1, 6.1) -- (11.9, 6.9);
\draw [gray, thin] (11.9, 6.1) -- (11.1, 6.9);
\draw [gray, thin] (3.1, 9.1) -- (3.9, 9.9);
\draw [gray, thin] (3.9, 9.1) -- (3.1, 9.9);
\draw [gray, thin] (11.1, 9.1) -- (11.9, 9.9);
\draw [gray, thin] (11.9, 9.1) -- (11.1, 9.9);
\draw [gray, thin] (0.1, 10.1) -- (0.9, 10.9);
\draw [gray, thin] (0.9, 10.1) -- (0.1, 10.9);
\draw [gray, thin] (11.1, 10.1) -- (11.9, 10.9);
\draw [gray, thin] (11.9, 10.1) -- (11.1, 10.9);
\draw [black, thick] (0.5, 0.5) -- (1.5, 0.5);
\draw [black, thick] (2.5, 0.5) -- (3.5, 0.5);
\draw [black, thick] (4.5, 0.5) -- (5.5, 0.5);
\draw [black, thick] (6.5, 0.5) -- (7.5, 0.5);
\draw [black, thick] (8.5, 0.5) -- (9.5, 0.5);
\draw [black, thick] (10.5, 0.5) -- (11.5, 0.5);
\draw [black, thick] (1.5, 1.5) -- (2.5, 1.5);
\draw [black, thick] (3.5, 1.5) -- (4.5, 1.5);
\draw [black, thick] (5.5, 1.5) -- (6.5, 1.5);
\draw [black, thick] (7.5, 1.5) -- (8.5, 1.5);
\draw [black, thick] (9.5, 1.5) -- (10.5, 1.5);
\draw [black, thick] (1.5, 2.5) -- (2.5, 2.5);
\draw [black, thick] (3.5, 2.5) -- (4.5, 2.5);
\draw [black, thick] (5.5, 2.5) -- (6.5, 2.5);
\draw [black, thick] (7.5, 2.5) -- (8.5, 2.5);
\draw [black, thick] (8.5, 2.5) -- (9.5, 2.5);
\draw [black, thick] (10.5, 2.5) -- (11.5, 2.5);
\draw [black, thick] (0.5, 3.5) -- (1.5, 3.5);
\draw [black, thick] (2.5, 3.5) -- (3.5, 3.5);
\draw [black, thick] (4.5, 3.5) -- (5.5, 3.5);
\draw [black, thick] (6.5, 3.5) -- (7.5, 3.5);
\draw [black, thick] (8.5, 3.5) -- (9.5, 3.5);
\draw [black, thick] (10.5, 3.5) -- (11.5, 3.5);
\draw [black, thick] (0.5, 4.5) -- (1.5, 4.5);
\draw [black, thick] (2.5, 4.5) -- (3.5, 4.5);
\draw [black, thick] (4.5, 4.5) -- (5.5, 4.5);
\draw [black, thick] (6.5, 4.5) -- (7.5, 4.5);
\draw [black, thick] (8.5, 4.5) -- (9.5, 4.5);
\draw [black, thick] (10.5, 4.5) -- (11.5, 4.5);
\draw [black, thick] (1.5, 5.5) -- (2.5, 5.5);
\draw [black, thick] (3.5, 5.5) -- (4.5, 5.5);
\draw [black, thick] (5.5, 5.5) -- (6.5, 5.5);
\draw [black, thick] (7.5, 5.5) -- (8.5, 5.5);
\draw [black, thick] (8.5, 5.5) -- (9.5, 5.5);
\draw [black, thick] (10.5, 5.5) -- (11.5, 5.5);
\draw [black, thick] (0.5, 6.5) -- (1.5, 6.5);
\draw [black, thick] (2.5, 6.5) -- (3.5, 6.5);
\draw [black, thick] (3.5, 6.5) -- (4.5, 6.5);
\draw [black, thick] (5.5, 6.5) -- (6.5, 6.5);
\draw [black, thick] (7.5, 6.5) -- (8.5, 6.5);
\draw [black, thick] (9.5, 6.5) -- (10.5, 6.5);
\draw [black, thick] (0.5, 7.5) -- (1.5, 7.5);
\draw [black, thick] (2.5, 7.5) -- (3.5, 7.5);
\draw [black, thick] (4.5, 7.5) -- (5.5, 7.5);
\draw [black, thick] (6.5, 7.5) -- (7.5, 7.5);
\draw [black, thick] (8.5, 7.5) -- (9.5, 7.5);
\draw [black, thick] (10.5, 7.5) -- (11.5, 7.5);
\draw [black, thick] (0.5, 8.5) -- (1.5, 8.5);
\draw [black, thick] (2.5, 8.5) -- (3.5, 8.5);
\draw [black, thick] (4.5, 8.5) -- (5.5, 8.5);
\draw [black, thick] (6.5, 8.5) -- (7.5, 8.5);
\draw [black, thick] (8.5, 8.5) -- (9.5, 8.5);
\draw [black, thick] (10.5, 8.5) -- (11.5, 8.5);
\draw [black, thick] (0.5, 9.5) -- (1.5, 9.5);
\draw [black, thick] (2.5, 9.5) -- (3.5, 9.5);
\draw [black, thick] (3.5, 9.5) -- (4.5, 9.5);
\draw [black, thick] (5.5, 9.5) -- (6.5, 9.5);
\draw [black, thick] (7.5, 9.5) -- (8.5, 9.5);
\draw [black, thick] (9.5, 9.5) -- (10.5, 9.5);
\draw [black, thick] (1.5, 10.5) -- (2.5, 10.5);
\draw [black, thick] (3.5, 10.5) -- (4.5, 10.5);
\draw [black, thick] (5.5, 10.5) -- (6.5, 10.5);
\draw [black, thick] (7.5, 10.5) -- (8.5, 10.5);
\draw [black, thick] (9.5, 10.5) -- (10.5, 10.5);
\draw [black, thick] (0.5, 11.5) -- (1.5, 11.5);
\draw [black, thick] (2.5, 11.5) -- (3.5, 11.5);
\draw [black, thick] (4.5, 11.5) -- (5.5, 11.5);
\draw [black, thick] (6.5, 11.5) -- (7.5, 11.5);
\draw [black, thick] (8.5, 11.5) -- (9.5, 11.5);
\draw [black, thick] (10.5, 11.5) -- (11.5, 11.5);
\draw [black, thick] (0.5, 0.5) -- (0.5, 1.5);
\draw [black, thick] (1.5, 0.5) -- (1.5, 1.5);
\draw [black, thick] (2.5, 0.5) -- (2.5, 1.5);
\draw [black, thick] (3.5, 0.5) -- (3.5, 1.5);
\draw [black, thick] (4.5, 0.5) -- (4.5, 1.5);
\draw [black, thick] (5.5, 0.5) -- (5.5, 1.5);
\draw [black, thick] (6.5, 0.5) -- (6.5, 1.5);
\draw [black, thick] (7.5, 0.5) -- (7.5, 1.5);
\draw [black, thick] (8.5, 0.5) -- (8.5, 1.5);
\draw [black, thick] (9.5, 0.5) -- (9.5, 1.5);
\draw [black, thick] (10.5, 0.5) -- (10.5, 1.5);
\draw [black, thick] (11.5, 0.5) -- (11.5, 1.5);
\draw [black, thick] (0.5, 1.5) -- (0.5, 2.5);
\draw [black, thick] (11.5, 1.5) -- (11.5, 2.5);
\draw [black, thick] (0.5, 2.5) -- (0.5, 3.5);
\draw [black, thick] (1.5, 2.5) -- (1.5, 3.5);
\draw [black, thick] (2.5, 2.5) -- (2.5, 3.5);
\draw [black, thick] (3.5, 2.5) -- (3.5, 3.5);
\draw [black, thick] (4.5, 2.5) -- (4.5, 3.5);
\draw [black, thick] (5.5, 2.5) -- (5.5, 3.5);
\draw [black, thick] (6.5, 2.5) -- (6.5, 3.5);
\draw [black, thick] (7.5, 2.5) -- (7.5, 3.5);
\draw [black, thick] (9.5, 2.5) -- (9.5, 3.5);
\draw [black, thick] (10.5, 2.5) -- (10.5, 3.5);
\draw [black, thick] (8.5, 3.5) -- (8.5, 4.5);
\draw [black, thick] (11.5, 3.5) -- (11.5, 4.5);
\draw [black, thick] (0.5, 4.5) -- (0.5, 5.5);
\draw [black, thick] (1.5, 4.5) -- (1.5, 5.5);
\draw [black, thick] (2.5, 4.5) -- (2.5, 5.5);
\draw [black, thick] (3.5, 4.5) -- (3.5, 5.5);
\draw [black, thick] (4.5, 4.5) -- (4.5, 5.5);
\draw [black, thick] (5.5, 4.5) -- (5.5, 5.5);
\draw [black, thick] (6.5, 4.5) -- (6.5, 5.5);
\draw [black, thick] (7.5, 4.5) -- (7.5, 5.5);
\draw [black, thick] (9.5, 4.5) -- (9.5, 5.5);
\draw [black, thick] (10.5, 4.5) -- (10.5, 5.5);
\draw [black, thick] (0.5, 5.5) -- (0.5, 6.5);
\draw [black, thick] (11.5, 5.5) -- (11.5, 6.5);
\draw [black, thick] (1.5, 6.5) -- (1.5, 7.5);
\draw [black, thick] (2.5, 6.5) -- (2.5, 7.5);
\draw [black, thick] (4.5, 6.5) -- (4.5, 7.5);
\draw [black, thick] (5.5, 6.5) -- (5.5, 7.5);
\draw [black, thick] (6.5, 6.5) -- (6.5, 7.5);
\draw [black, thick] (7.5, 6.5) -- (7.5, 7.5);
\draw [black, thick] (8.5, 6.5) -- (8.5, 7.5);
\draw [black, thick] (9.5, 6.5) -- (9.5, 7.5);
\draw [black, thick] (10.5, 6.5) -- (10.5, 7.5);
\draw [black, thick] (11.5, 6.5) -- (11.5, 7.5);
\draw [black, thick] (0.5, 7.5) -- (0.5, 8.5);
\draw [black, thick] (3.5, 7.5) -- (3.5, 8.5);
\draw [black, thick] (1.5, 8.5) -- (1.5, 9.5);
\draw [black, thick] (2.5, 8.5) -- (2.5, 9.5);
\draw [black, thick] (4.5, 8.5) -- (4.5, 9.5);
\draw [black, thick] (5.5, 8.5) -- (5.5, 9.5);
\draw [black, thick] (6.5, 8.5) -- (6.5, 9.5);
\draw [black, thick] (7.5, 8.5) -- (7.5, 9.5);
\draw [black, thick] (8.5, 8.5) -- (8.5, 9.5);
\draw [black, thick] (9.5, 8.5) -- (9.5, 9.5);
\draw [black, thick] (10.5, 8.5) -- (10.5, 9.5);
\draw [black, thick] (11.5, 8.5) -- (11.5, 9.5);
\draw [black, thick] (0.5, 9.5) -- (0.5, 10.5);
\draw [black, thick] (11.5, 9.5) -- (11.5, 10.5);
\draw [black, thick] (0.5, 10.5) -- (0.5, 11.5);
\draw [black, thick] (1.5, 10.5) -- (1.5, 11.5);
\draw [black, thick] (2.5, 10.5) -- (2.5, 11.5);
\draw [black, thick] (3.5, 10.5) -- (3.5, 11.5);
\draw [black, thick] (4.5, 10.5) -- (4.5, 11.5);
\draw [black, thick] (5.5, 10.5) -- (5.5, 11.5);
\draw [black, thick] (6.5, 10.5) -- (6.5, 11.5);
\draw [black, thick] (7.5, 10.5) -- (7.5, 11.5);
\draw [black, thick] (8.5, 10.5) -- (8.5, 11.5);
\draw [black, thick] (9.5, 10.5) -- (9.5, 11.5);
\draw [black, thick] (10.5, 10.5) -- (10.5, 11.5);
\draw [black, thick] (11.5, 10.5) -- (11.5, 11.5);

        \end{tikzpicture}
        \caption{A Hamilton cycle in $G(12,12)$ with exactly $12^2 - 12 = 132$ turns, formed from taking the symmetric difference of the cycle in Figure \ref{6x6C''} with a square overlay. Straights are marked with crosses.}
        \label{12x12max}
    \end{figure}

$H$ has exactly $n$ straights, corresponding to the $n$ turns of $C''$.

So we have formed a Hamilton cycle on $G(n,n)$ with exactly $n$ straights, i.e. exactly $n^2-n$ turns.
\end{proof}

In particular, every minimal turning cycle of $G\left(\frac n2, \frac n2 \right)$ can be turned into a maximal turning cycle of $G(n,n)$ in two ways; we could changed construction of $C''$ so that upward oriented segments move the vertex left, and rightward oriented segments move the vertex down. So the number of ways to draw a Hamilton cycle with $n^2 - n$ turns on an $n \times n$ grid where $4|n$ is at least twice the number of ways to draw a Hamilton cycle with $n$ turns on an $\frac n2 \times \frac n2$ grid.

The $n \equiv 2\pmod 4$ case has some differences to the $4|n$ case. The construction method above will not work, as the half-form is in a square of odd side length, and there are no Hamilton cycles of such a grid. In fact, the $n$ straights bound, given from Lemma \ref{hamiltonianhalfform} and Corollary \ref{2turns}, is not tight here, as we prove below.

\begin{lemma}\label{oddbound}
Any Hamilton cycle of $G(n,n)$, where $4|n+2$ and $n>2$, has at least $n+2$ straights.
\end{lemma}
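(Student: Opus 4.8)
The plan is to carry the lower bound over to the half-form and then exploit the oddness of $k:=\frac n2$. Recall from Lemma \ref{turnstostraights} and the passage to the half-form that the number of straights of the original cycle is at least the number of turns of the half-form, and that the half-form is a union of cycles meeting every cell of $G(k,k)$ (Lemma \ref{hamiltonianhalfform}), where $k$ is now odd. Write $T$ for the number of turns of the half-form. Each row and column of the half-form contains an even number of turns (Lemma \ref{evenrowcol} applied to each constituent cycle), so $T$ is even; and the argument of Lemma \ref{minab} already gives $T\ge 2k=n$. Since $T$ is even, it therefore suffices to rule out $T=2k$, for then $T\ge 2k+2=n+2$ and the conclusion follows.

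Assume $T=2k$. First I would show that every row and every column of the half-form has exactly two turns. The new ingredient here, replacing Lemma \ref{turninoddrow}, is that no row can be turn-free: the end of any horizontal run must itself be a turn, so a turn-free row has no horizontal edges at all and is crossed only by vertical strands, whence the horizontal cut immediately below it is crossed once by each of the $k$ cells of the row. As $k$ is odd this is an odd number of crossings, contradicting the fact that a union of closed curves meets any line an even number of times. Hence each row, and by symmetry each column, carries at least two turns; since the total is $2k$ and there are $k$ rows and $k$ columns, each must carry exactly two.

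The final step is a corner-peeling induction. Every corner cell of $G(k,k)$ is forced to be a turn, being incident to only two grid edges. In the bottom row these two corners already use up its quota of two turns, so every interior cell of the bottom row is a horizontal pass-through; the bottom row is thus a single horizontal run whose corner turns send their remaining edges upward. The same applies to the top row and to the two extreme columns, so the half-form contains the entire boundary rectangle and has turns only at the four corners. Removing this boundary leaves an odd $(k-2)\times(k-2)$ grid that must still be covered, still with exactly two turns in each of its rows and columns, since no turns were spent on the boundary just stripped away. Iterating $\tfrac{k-1}{2}$ times reduces to a single central cell, which cannot be covered by any cycle --- a contradiction. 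This excludes $T=2k$ and proves the lemma.

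The main obstacle is the bookkeeping around degree-four cells, which the half-form permits (crossings and doubly traversed segments). These must be shown not to disturb the two arguments above: a crossing carries horizontal edges and so cannot occur in a turn-free row, which keeps the parity step valid, and one must verify that boundary cells cannot support the doubled-turn configurations, so that the peeling step genuinely forces the boundary rectangle. Confirming that these degenerate local pictures are excluded is where the real work lies; the rest is the parity count and the induction described above.
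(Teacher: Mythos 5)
Your overall strategy is the paper's: pass to the half-form, show that exactly $n$ half-form turns would force exactly two turns in every row and column, extract the boundary structure, and derive a contradiction. But the step where you rule out a turn-free row has a genuine gap. You argue that in a turn-free row every cell is crossed vertically, so the cut below the row is crossed exactly $k$ times, an odd number. This count is wrong in precisely the degenerate situation that half-forms allow: a cell of the half-form can be traversed by \emph{two} vertical strands (for instance when the reduced form runs two parallel vertical segments through one $2\times 2$ block, or doubles a vertical segment). Such a cell has no horizontal edges and no turns, so nothing stops it from sitting in a turn-free row, yet it crosses the cut twice; with $k=5$ and crossing multiplicities $(1,1,1,1,2)$ the cut is crossed six times and your parity contradiction evaporates. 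Your closing remark dismisses only crossings (degree-four cells carrying horizontal edges), which indeed cannot occur in a turn-free row --- but doubled vertical passes are the actual problem, and they are exactly the configurations the paper flags as the subtlety of half-forms. The paper closes this hole in a different order: it first pins down that every column has exactly two turns (a turn-free row forces, via Lemma \ref{turnexistence} and Lemma \ref{evenrowcol}, at least two turns in every column, and the total of $n$ then forces exactly two), and only then runs the parity argument, which together with the covering property (Lemma \ref{hamiltonianhalfform}) produces a cell passed through twice vertically; that cell's column then contains at least four turns, contradicting the exactly-two count. In other words, parity alone only detects the doubly-traversed cell; the contradiction must come from the column turn count, which your write-up never brings to bear at this point.

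Secondarily, your endgame differs from the paper's: you peel boundary rectangles inductively down to a single uncoverable central cell, while the paper stops after exhibiting the first boundary square cycle, converts it back through the square overlay, and observes that the original cycle would then be disconnected (a width-two ring separated from the $(n-4)\times(n-4)$ interior), contradicting Hamiltonicity. Your peeling route can likely be made to work, but it requires repeating the degenerate-case bookkeeping (no crossings, no doubled edges, forced corner turns) at every level, whereas the disconnectedness argument needs it only once. That part is a matter of economy; the real defect is the parity step above.
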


\begin{proof}
Assume for contradiction that $n$ straights are possible. We then take the half-form, remembering that every turn in the half-form corresponds to a straight in the original cycle (including ``invisible" turns where the cycle can turn twice in the same square or go over segments twice).

Following the proof of Lemma \ref{minab}, we note that there are either turns in every row or turns in every column; if this were not true, then we can choose a cycle where there are no turns in its row or column, and derive a contradiction from Lemma \ref{turnexistence}. Without loss of generality, assume that there are turns in every column.

Since the number of turns in every row is even, there are exactly $2$ turns in every row, to attain exactly $n$ turns.

We claim that there are turns in every row. Suppose not, and take a row $R$ with no turns. Every cycle must pass through $R$ an even number of times, since cycles must alternate passing through upward and downward. But $R$ contains an odd number of cells, so some cell $c$ in $R$ is passed through an even number of times.

But the half-form passes through every cell, so in particular $c$ is passed through twice vertically, which must yield four turns in the same column as $C$, a contradiction.

Hence there are two turns in every row. All four corners of the $\frac n2 \times \frac n2$ grid must contain a turn, and since there are two turns in every row and column, there are no other turns adjacent to the boundary of the grid. So the half-form contains a square cycle along the outermost cells of the grid.

Converting this back into the original cycle, we see that the $8(n-2)$ squares forming the four rows and four columns closest to the boundary become a connected component, disconnected from the $(n-4) \times (n-4)$ grid inside, a contradiction as $n \geq 6$.
\end{proof}

The $n=2$ case trivially gives $4$ turns. For $n \geq 6$, $n+2$ straights is achievable. We will show this using a similar method to the proof of Claim \ref{square4|n}.

\begin{claim}\label{maxsq 2mod4}
If $4|n+2$ and $n \geq 6$, the maximal number of turns of a Hamilton cycle in $G(n,n)$ is exactly $n^2 - n - 2$.
\end{claim}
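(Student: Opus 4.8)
The upper bound is already in hand: Lemma \ref{oddbound} guarantees that any Hamilton cycle of $G(n,n)$ with $4 \mid n+2$ and $n \ge 6$ has at least $n+2$ straights, i.e.\ at most $n^2 - n - 2$ turns. So the entire content of the claim is a construction: I must exhibit a single Hamilton cycle of $G(n,n)$ with exactly $n+2$ straights. My plan is to run the machinery of Claim \ref{square4|n}: design a half-form on $G(\frac n2, \frac n2)$ with exactly $n+2$ turns, and then lift it to $G(n,n)$ by dilating by a factor of $2$, shifting each turn against its clockwise orientation so that it shares exactly one segment with the square overlay, and finally taking the symmetric difference with the square overlay. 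Since each turn of the lifted cycle $C''$ becomes one straight of the resulting cycle $H$, a half-form with $n+2$ turns produces exactly the desired $n+2$ straights.

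The essential new difficulty, and the reason the $4 \mid n$ recipe does not transfer verbatim, is parity: here $\frac n2$ is odd, so $G(\frac n2, \frac n2)$ has an odd number of cells and admits no Hamilton cycle. Thus the half-form cannot be a single simple cycle realising the minimal $2\cdot\frac n2 = n$ turns; this is precisely the obstruction exploited in Lemma \ref{oddbound}. Instead I take the half-form to be a closed walk covering every cell (a union of cycles, in the sense already permitted for half-forms, in which one cell may be traversed twice or a segment doubled). Concretely I build a comb-type covering with two turns in each of the $\frac n2$ rows, in the style of the minimal constructions of Claim \ref{minconstruct}, and then install a single local modification near one boundary cell that (i) absorbs the lone cell left uncovered by the odd parity and (ii) introduces exactly one extra pair of turns. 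This gives $2\cdot\frac n2 + 2 = n+2$ turns, matching the bound of Lemma \ref{oddbound} exactly.

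It then remains to lift this half-form. Dilating, shifting, and taking the symmetric difference with the square overlay of $G(n,n)$ exactly as in Claim \ref{square4|n} produces a cycle $H$ in which every turn of the half-form has become a straight, so $H$ has precisely $n+2$ straights and hence $n^2 - n - 2$ turns. Two things must be verified. First, that $H$ visits every cell: this is immediate from the covering property, since (as in Lemma \ref{hamiltonianhalfform}) a covered cell of the half-grid forces its $2\times 2$ block to be traversed. Second, and crucially, that $H$ is \emph{connected} --- a single cycle rather than several.

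This connectivity is the main obstacle. Because the half-form is not a genuine Hamilton cycle of the half-grid but a closed walk with a defect, the symmetric difference with the overlay could in principle leave a small separate component --- exactly the failure mode that drives the contradiction in Lemma \ref{oddbound}, where the boundary square cycle detaches an inner $(n-4)\times(n-4)$ block. The local modification must therefore be positioned so that, after taking the symmetric difference, the $2\times 2$ blocks of $G(n,n)$ are joined in a single closed order. I would confirm this by tracing $H$ through the modified region explicitly and checking that it knits all the blocks into one cycle; a single worked base case (for instance $G(6,6)$, where the half-grid is $G(3,3)$ and we seek $8$ straights) makes the pattern transparent, with the unmodified comb handling all remaining rows for larger $n$.
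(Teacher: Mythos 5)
Your outline coincides with the paper's: Lemma \ref{oddbound} for the upper bound, then a half-form with $n+2$ turns on $G\bigl(\tfrac n2,\tfrac n2\bigr)$, lifted through the square overlay exactly as in Claim \ref{square4|n}. But there is a genuine gap at the one point where all of the difficulty sits: the turn accounting for your defect gadget. Under the paper's own conventions (see Figure \ref{half-form_example}, and the half-forms of Figure \ref{oddhalfform}), a doubled segment carries two turns at \emph{each} endpoint, four in all. So the natural way to ``absorb the lone uncovered cell'' costs $+4$, not the $+2$ you assert, and your comb --- which by design has two turns in every one of the $\tfrac n2$ rows, hence $n$ turns --- would lift to a Hamilton cycle with $n+4$ straights, i.e.\ $n^2-n-4$ turns, missing the target by exactly $2$. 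You never exhibit a local modification that costs only one pair of turns, and this is not a routine verification to be ``confirmed by tracing'': it is the crux of the whole claim.

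The paper balances the books the other way around: its bulk cycle is cheaper than yours. For $n=6$ the half-grid is $3\times 3$, the bulk cycle is the border with only $4=n-2$ turns, and border plus doubled segment gives $n+2$ exactly. This is also why your plan to ``check $G(6,6)$ and let the unmodified comb handle the remaining rows'' is misleading rather than transparent: the base case closes only because the $3\times3$ border has $n-2$ turns, and for the $5\times 5$ half-grid one can check that \emph{no} single cycle covering all cells but one has fewer than $10=n$ turns (every corner it covers is forced to be a turn, and a short case analysis on the resulting four-horizontal-four-vertical-segment octagons rules out $8$). Consequently, for $n\ge 10$ the scheme ``cycle with two turns per row plus a doubled-segment patch'' cannot reach $n+2$ at all; attaining the bound requires a genuinely different local configuration near the defect (one whose lifted $2\times2$ cycle interacts with, and partially cancels against, the bulk cycle's reduced form), together with the connectivity verification you deferred. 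As written, the construction half of your proof does not go through.
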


\begin{proof}
We begin by constructing a half-form on $G\left(\frac n2, \frac n2 \right)$ which has exactly $n+2$ turns. This consists of two cycles. One is a spiral cycle, and the other is a cycle consisting of a single segment twice (with two turns on each end).

    \begin{figure}[H]
        \centering
        \begin{tikzpicture}
\draw[step=0.5cm,gray,thin,dashed] (0,1) grid (1.5,2.5);
\draw [black, thin] (0, 1) -- (1.5, 1) -- (1.5, 2.5) -- (0, 2.5) -- cycle;
\draw[step=0.5cm,gray,thin,dashed] (2,0.5) grid (4.5,3);
\draw [black, thin] (2, 0.5) -- (4.5, 0.5) -- (4.5, 3) -- (2, 3) -- cycle;
\draw[step=0.5cm,gray,thin,dashed] (5,0) grid (8.5,3.5);
\draw [black, thin] (5, 0) -- (8.5, 0) -- (8.5, 3.5) -- (5, 3.5) -- cycle;
\draw [black, thick] (5.25, 0.25) -- (5.75, 0.25);
\draw [black, thick] (5.75, 0.25) -- (6.25, 0.25);
\draw [black, thick] (6.25, 0.25) -- (6.75, 0.25);
\draw [black, thick] (6.75, 0.25) -- (7.25, 0.25);
\draw [black, thick] (7.75, 0.25) -- (8.25, 0.25);
\draw [black, thick] (2.25, 0.75) -- (2.75, 0.75);
\draw [black, thick] (2.75, 0.75) -- (3.25, 0.75);
\draw [black, thick] (3.25, 0.75) -- (3.75, 0.75);
\draw [black, thick] (3.75, 0.75) -- (4.25, 0.75);
\draw [black, thick] (5.75, 0.75) -- (6.25, 0.75);
\draw [black, thick] (6.25, 0.75) -- (6.75, 0.75);
\draw [black, thick] (0.25, 1.25) -- (0.75, 1.25);
\draw [black, thick] (0.75, 1.25) -- (1.25, 1.25);
\draw [black, thick] (2.75, 1.25) -- (3.25, 1.25);
\draw [black, thick] (3.25, 1.25) -- (3.75, 1.25);
\draw [black, thick] (6.25, 1.25) -- (6.75, 1.25);
\draw [black, thick] (0.75, 1.75) -- (1.25, 1.75);
\draw [black, thick] (3.25, 1.75) -- (3.75, 1.75);
\draw [black, thick] (6.75, 1.75) -- (7.25, 1.75);
\draw [black, thick] (0.25, 2.25) -- (0.75, 2.25);
\draw [black, thick] (0.75, 2.25) -- (1.25, 2.25);
\draw [black, thick] (3.75, 2.25) -- (4.25, 2.25);
\draw [black, thick] (6.25, 2.25) -- (6.75, 2.25);
\draw [black, thick] (6.75, 2.25) -- (7.25, 2.25);
\draw [black, thick] (2.25, 2.75) -- (2.75, 2.75);
\draw [black, thick] (3.25, 2.75) -- (3.75, 2.75);
\draw [black, thick] (3.75, 2.75) -- (4.25, 2.75);
\draw [black, thick] (5.75, 2.75) -- (6.25, 2.75);
\draw [black, thick] (6.25, 2.75) -- (6.75, 2.75);
\draw [black, thick] (6.75, 2.75) -- (7.25, 2.75);
\draw [black, thick] (7.25, 2.75) -- (7.75, 2.75);
\draw [black, thick] (5.25, 3.25) -- (5.75, 3.25);
\draw [black, thick] (5.75, 3.25) -- (6.25, 3.25);
\draw [black, thick] (6.25, 3.25) -- (6.75, 3.25);
\draw [black, thick] (6.75, 3.25) -- (7.25, 3.25);
\draw [black, thick] (7.25, 3.25) -- (7.75, 3.25);
\draw [black, thick] (7.75, 3.25) -- (8.25, 3.25);
\draw [black, thick] (5.25, 0.25) -- (5.25, 0.75);
\draw [black, thick] (7.25, 0.25) -- (7.25, 0.75);
\draw [black, thick] (7.75, 0.25) -- (7.75, 0.75);
\draw [black, thick] (8.25, 0.25) -- (8.25, 0.75);
\draw [black, thick] (2.25, 0.75) -- (2.25, 1.25);
\draw [black, thick] (4.25, 0.75) -- (4.25, 1.25);
\draw [black, thick] (5.25, 0.75) -- (5.25, 1.25);
\draw [black, thick] (5.75, 0.75) -- (5.75, 1.25);
\draw [black, thick] (6.75, 0.75) -- (6.75, 1.25);
\draw [black, thick] (7.25, 0.75) -- (7.25, 1.25);
\draw [black, thick] (7.75, 0.75) -- (7.75, 1.25);
\draw [black, thick] (8.25, 0.75) -- (8.25, 1.25);
\draw [black, thick] (0.25, 1.25) -- (0.25, 1.75);
\draw [black, thick] (1.25, 1.25) -- (1.25, 1.75);
\draw [black, thick] (2.25, 1.25) -- (2.25, 1.75);
\draw [black, thick] (2.75, 1.25) -- (2.75, 1.75);
\draw [black, thick] (3.75, 1.25) -- (3.75, 1.75);
\draw [black, thick] (4.25, 1.25) -- (4.25, 1.75);
\draw [black, thick] (5.25, 1.25) -- (5.25, 1.75);
\draw [black, thick] (5.75, 1.25) -- (5.75, 1.75);
\draw [black, thick] (6.25, 1.25) -- (6.25, 1.75);
\draw [black, thick] (7.25, 1.25) -- (7.25, 1.75);
\draw [black, thick] (7.75, 1.25) -- (7.75, 1.75);
\draw [black, thick] (8.25, 1.25) -- (8.25, 1.75);
\draw [black, thick] (0.25, 1.75) -- (0.25, 2.25);
\draw [black, thick] (1.25, 1.75) -- (1.25, 2.25);
\draw [black, thick] (2.25, 1.75) -- (2.25, 2.25);
\draw [black, thick] (2.75, 1.75) -- (2.75, 2.25);
\draw [black, thick] (3.25, 1.75) -- (3.25, 2.25);
\draw [black, thick] (4.25, 1.75) -- (4.25, 2.25);
\draw [black, thick] (5.25, 1.75) -- (5.25, 2.25);
\draw [black, thick] (5.75, 1.75) -- (5.75, 2.25);
\draw [black, thick] (6.25, 1.75) -- (6.25, 2.25);
\draw [black, thick] (7.25, 1.75) -- (7.25, 2.25);
\draw [black, thick] (7.75, 1.75) -- (7.75, 2.25);
\draw [black, thick] (8.25, 1.75) -- (8.25, 2.25);
\draw [black, thick] (2.25, 2.25) -- (2.25, 2.75);
\draw [black, thick] (2.75, 2.25) -- (2.75, 2.75);
\draw [black, thick] (3.25, 2.25) -- (3.25, 2.75);
\draw [black, thick] (4.25, 2.25) -- (4.25, 2.75);
\draw [black, thick] (5.25, 2.25) -- (5.25, 2.75);
\draw [black, thick] (5.75, 2.25) -- (5.75, 2.75);
\draw [black, thick] (7.75, 2.25) -- (7.75, 2.75);
\draw [black, thick] (8.25, 2.25) -- (8.25, 2.75);
\draw [black, thick] (5.25, 2.75) -- (5.25, 3.25);
\draw [black, thick] (8.25, 2.75) -- (8.25, 3.25);

        \end{tikzpicture}
        \caption{Half-form constructions for $n=6, 10, 14$.}
        \label{oddhalfform}
    \end{figure}

We then orient both cycles clockwise; this includes the single-segment cycle; the left endpoint has a left-up turn and an up-right turn, and the right endpoint has a right-down and down-left turn.

We then apply the same transformation as detailed in the proof of Claim \ref{square4|n} to obtain a reduced form.

    \begin{figure}[H]
        \centering
        \begin{tikzpicture}
\draw[step=0.5cm,gray,thin,dashed] (0,0) grid (5.0,5.0);
\draw [black, thin] (0, 0) -- (5.0, 0) -- (5.0, 5.0) -- (0, 5.0) -- cycle;
\draw [black, thick] (0.25, 0.75) -- (0.75, 0.75);
\draw [black, thick] (0.75, 0.75) -- (1.25, 0.75);
\draw [black, thick] (1.25, 0.75) -- (1.75, 0.75);
\draw [black, thick] (1.75, 0.75) -- (2.25, 0.75);
\draw [black, thick] (2.25, 0.75) -- (2.75, 0.75);
\draw [black, thick] (2.75, 0.75) -- (3.25, 0.75);
\draw [black, thick] (3.25, 0.75) -- (3.75, 0.75);
\draw [black, thick] (3.75, 0.75) -- (4.25, 0.75);
\draw [black, thick] (4.25, 0.75) -- (4.75, 0.75);
\draw [black, thick] (1.75, 1.25) -- (2.25, 1.25);
\draw [black, thick] (2.25, 1.25) -- (2.75, 1.25);
\draw [black, thick] (2.75, 1.25) -- (3.25, 1.25);
\draw [black, thick] (2.25, 2.75) -- (2.75, 2.75);
\draw [black, thick] (2.75, 2.75) -- (3.25, 2.75);
\draw [black, thick] (3.75, 3.25) -- (4.25, 3.25);
\draw [black, thick] (3.75, 3.75) -- (4.25, 3.75);
\draw [black, thick] (0.25, 4.25) -- (0.75, 4.25);
\draw [black, thick] (0.75, 4.25) -- (1.25, 4.25);
\draw [black, thick] (1.25, 4.25) -- (1.75, 4.25);
\draw [black, thick] (2.25, 4.25) -- (2.75, 4.25);
\draw [black, thick] (2.75, 4.25) -- (3.25, 4.25);
\draw [black, thick] (3.25, 4.25) -- (3.75, 4.25);
\draw [black, thick] (3.75, 4.25) -- (4.25, 4.25);
\draw [black, thick] (4.25, 4.25) -- (4.75, 4.25);
\draw [black, thick] (0.25, 0.75) -- (0.25, 1.25);
\draw [black, thick] (4.75, 0.75) -- (4.75, 1.25);
\draw [black, thick] (0.25, 1.25) -- (0.25, 1.75);
\draw [black, thick] (1.75, 1.25) -- (1.75, 1.75);
\draw [black, thick] (3.25, 1.25) -- (3.25, 1.75);
\draw [black, thick] (4.75, 1.25) -- (4.75, 1.75);
\draw [black, thick] (0.25, 1.75) -- (0.25, 2.25);
\draw [black, thick] (1.75, 1.75) -- (1.75, 2.25);
\draw [black, thick] (3.25, 1.75) -- (3.25, 2.25);
\draw [black, thick] (4.75, 1.75) -- (4.75, 2.25);
\draw [black, thick] (0.25, 2.25) -- (0.25, 2.75);
\draw [black, thick] (1.75, 2.25) -- (1.75, 2.75);
\draw [black, thick] (3.25, 2.25) -- (3.25, 2.75);
\draw [black, thick] (4.75, 2.25) -- (4.75, 2.75);
\draw [black, thick] (0.25, 2.75) -- (0.25, 3.25);
\draw [black, thick] (1.75, 2.75) -- (1.75, 3.25);
\draw [black, thick] (2.25, 2.75) -- (2.25, 3.25);
\draw [black, thick] (4.75, 2.75) -- (4.75, 3.25);
\draw [black, thick] (0.25, 3.25) -- (0.25, 3.75);
\draw [black, thick] (1.75, 3.25) -- (1.75, 3.75);
\draw [black, thick] (2.25, 3.25) -- (2.25, 3.75);
\draw [black, thick] (3.75, 3.25) -- (3.75, 3.75);
\draw [black, thick] (4.25, 3.25) -- (4.25, 3.75);
\draw [black, thick] (4.75, 3.25) -- (4.75, 3.75);
\draw [black, thick] (0.25, 3.75) -- (0.25, 4.25);
\draw [black, thick] (1.75, 3.75) -- (1.75, 4.25);
\draw [black, thick] (2.25, 3.75) -- (2.25, 4.25);
\draw [black, thick] (4.75, 3.75) -- (4.75, 4.25);

        \end{tikzpicture}
        \caption{The reduced form for $n=10$.}
        \label{10reducedform}
    \end{figure}

The single segment-cycle becomes a $2 \times 2$ cycle. Taking the symmetric difference of this cycle with a square overlay joins the two cycles and yields a Hamilton cycle with exactly $n+2$ straights. An example of the $n=10$ case is shown below.

    \begin{figure}[H]
        \centering
        \begin{tikzpicture}
\draw[step=1cm,gray,thin,dashed] (0,0) grid (10,10);
\draw [black, thin] (0, 0) -- (10, 0) -- (10, 10) -- (0, 10) -- cycle;
\draw [gray, thin] (0.10, 1.1) -- (0.9, 1.9);
\draw [gray, thin] (0.9, 1.1) -- (0.10, 1.9);
\draw [gray, thin] (9.1, 1.1) -- (9.9, 1.9);
\draw [gray, thin] (9.9, 1.1) -- (9.1, 1.9);
\draw [gray, thin] (3.1, 2.1) -- (3.9, 2.9);
\draw [gray, thin] (3.9, 2.1) -- (3.1, 2.9);
\draw [gray, thin] (6.1, 2.1) -- (6.9, 2.9);
\draw [gray, thin] (6.9, 2.1) -- (6.1, 2.9);
\draw [gray, thin] (4.1, 5.1) -- (4.9, 5.9);
\draw [gray, thin] (4.9, 5.1) -- (4.1, 5.9);
\draw [gray, thin] (6.1, 5.1) -- (6.9, 5.9);
\draw [gray, thin] (6.9, 5.1) -- (6.1, 5.9);
\draw [gray, thin] (7.1, 6.1) -- (7.9, 6.9);
\draw [gray, thin] (7.9, 6.1) -- (7.1, 6.9);
\draw [gray, thin] (8.1, 6.1) -- (8.9, 6.9);
\draw [gray, thin] (8.9, 6.1) -- (8.1, 6.9);
\draw [gray, thin] (7.1, 7.1) -- (7.9, 7.9);
\draw [gray, thin] (7.9, 7.1) -- (7.1, 7.9);
\draw [gray, thin] (8.1, 7.1) -- (8.9, 7.9);
\draw [gray, thin] (8.9, 7.1) -- (8.1, 7.9);
\draw [gray, thin] (0.10, 8.1) -- (0.9, 8.9);
\draw [gray, thin] (0.9, 8.1) -- (0.10, 8.9);
\draw [gray, thin] (3.1, 8.1) -- (3.9, 8.9);
\draw [gray, thin] (3.9, 8.1) -- (3.1, 8.9);
\draw [gray, thin] (4.1, 8.1) -- (4.9, 8.9);
\draw [gray, thin] (4.9, 8.1) -- (4.1, 8.9);
\draw [gray, thin] (9.1, 8.1) -- (9.9, 8.9);
\draw [gray, thin] (9.9, 8.1) -- (9.1, 8.9);
\draw [black, thick] (0.5, 0.5) -- (1.5, 0.5);
\draw [black, thick] (2.5, 0.5) -- (3.5, 0.5);
\draw [black, thick] (4.5, 0.5) -- (5.5, 0.5);
\draw [black, thick] (6.5, 0.5) -- (7.5, 0.5);
\draw [black, thick] (8.5, 0.5) -- (9.5, 0.5);
\draw [black, thick] (1.5, 1.5) -- (2.5, 1.5);
\draw [black, thick] (3.5, 1.5) -- (4.5, 1.5);
\draw [black, thick] (5.5, 1.5) -- (6.5, 1.5);
\draw [black, thick] (7.5, 1.5) -- (8.5, 1.5);
\draw [black, thick] (0.5, 2.5) -- (1.5, 2.5);
\draw [black, thick] (2.5, 2.5) -- (3.5, 2.5);
\draw [black, thick] (3.5, 2.5) -- (4.5, 2.5);
\draw [black, thick] (5.5, 2.5) -- (6.5, 2.5);
\draw [black, thick] (6.5, 2.5) -- (7.5, 2.5);
\draw [black, thick] (8.5, 2.5) -- (9.5, 2.5);
\draw [black, thick] (0.5, 3.5) -- (1.5, 3.5);
\draw [black, thick] (2.5, 3.5) -- (3.5, 3.5);
\draw [black, thick] (4.5, 3.5) -- (5.5, 3.5);
\draw [black, thick] (6.5, 3.5) -- (7.5, 3.5);
\draw [black, thick] (8.5, 3.5) -- (9.5, 3.5);
\draw [black, thick] (0.5, 4.5) -- (1.5, 4.5);
\draw [black, thick] (2.5, 4.5) -- (3.5, 4.5);
\draw [black, thick] (4.5, 4.5) -- (5.5, 4.5);
\draw [black, thick] (6.5, 4.5) -- (7.5, 4.5);
\draw [black, thick] (8.5, 4.5) -- (9.5, 4.5);
\draw [black, thick] (0.5, 5.5) -- (1.5, 5.5);
\draw [black, thick] (2.5, 5.5) -- (3.5, 5.5);
\draw [black, thick] (5.5, 5.5) -- (6.5, 5.5);
\draw [black, thick] (6.5, 5.5) -- (7.5, 5.5);
\draw [black, thick] (8.5, 5.5) -- (9.5, 5.5);
\draw [black, thick] (0.5, 6.5) -- (1.5, 6.5);
\draw [black, thick] (2.5, 6.5) -- (3.5, 6.5);
\draw [black, thick] (4.5, 6.5) -- (5.5, 6.5);
\draw [black, thick] (6.5, 6.5) -- (7.5, 6.5);
\draw [black, thick] (7.5, 6.5) -- (8.5, 6.5);
\draw [black, thick] (8.5, 6.5) -- (9.5, 6.5);
\draw [black, thick] (0.5, 7.5) -- (1.5, 7.5);
\draw [black, thick] (2.5, 7.5) -- (3.5, 7.5);
\draw [black, thick] (4.5, 7.5) -- (5.5, 7.5);
\draw [black, thick] (6.5, 7.5) -- (7.5, 7.5);
\draw [black, thick] (7.5, 7.5) -- (8.5, 7.5);
\draw [black, thick] (8.5, 7.5) -- (9.5, 7.5);
\draw [black, thick] (1.5, 8.5) -- (2.5, 8.5);
\draw [black, thick] (5.5, 8.5) -- (6.5, 8.5);
\draw [black, thick] (7.5, 8.5) -- (8.5, 8.5);
\draw [black, thick] (0.5, 9.5) -- (1.5, 9.5);
\draw [black, thick] (2.5, 9.5) -- (3.5, 9.5);
\draw [black, thick] (4.5, 9.5) -- (5.5, 9.5);
\draw [black, thick] (6.5, 9.5) -- (7.5, 9.5);
\draw [black, thick] (8.5, 9.5) -- (9.5, 9.5);
\draw [black, thick] (0.5, 0.5) -- (0.5, 1.5);
\draw [black, thick] (1.5, 0.5) -- (1.5, 1.5);
\draw [black, thick] (2.5, 0.5) -- (2.5, 1.5);
\draw [black, thick] (3.5, 0.5) -- (3.5, 1.5);
\draw [black, thick] (4.5, 0.5) -- (4.5, 1.5);
\draw [black, thick] (5.5, 0.5) -- (5.5, 1.5);
\draw [black, thick] (6.5, 0.5) -- (6.5, 1.5);
\draw [black, thick] (7.5, 0.5) -- (7.5, 1.5);
\draw [black, thick] (8.5, 0.5) -- (8.5, 1.5);
\draw [black, thick] (9.5, 0.5) -- (9.5, 1.5);
\draw [black, thick] (0.5, 1.5) -- (0.5, 2.5);
\draw [black, thick] (9.5, 1.5) -- (9.5, 2.5);
\draw [black, thick] (1.5, 2.5) -- (1.5, 3.5);
\draw [black, thick] (2.5, 2.5) -- (2.5, 3.5);
\draw [black, thick] (4.5, 2.5) -- (4.5, 3.5);
\draw [black, thick] (5.5, 2.5) -- (5.5, 3.5);
\draw [black, thick] (7.5, 2.5) -- (7.5, 3.5);
\draw [black, thick] (8.5, 2.5) -- (8.5, 3.5);
\draw [black, thick] (0.5, 3.5) -- (0.5, 4.5);
\draw [black, thick] (3.5, 3.5) -- (3.5, 4.5);
\draw [black, thick] (6.5, 3.5) -- (6.5, 4.5);
\draw [black, thick] (9.5, 3.5) -- (9.5, 4.5);
\draw [black, thick] (1.5, 4.5) -- (1.5, 5.5);
\draw [black, thick] (2.5, 4.5) -- (2.5, 5.5);
\draw [black, thick] (4.5, 4.5) -- (4.5, 5.5);
\draw [black, thick] (5.5, 4.5) -- (5.5, 5.5);
\draw [black, thick] (7.5, 4.5) -- (7.5, 5.5);
\draw [black, thick] (8.5, 4.5) -- (8.5, 5.5);
\draw [black, thick] (0.5, 5.5) -- (0.5, 6.5);
\draw [black, thick] (3.5, 5.5) -- (3.5, 6.5);
\draw [black, thick] (4.5, 5.5) -- (4.5, 6.5);
\draw [black, thick] (9.5, 5.5) -- (9.5, 6.5);
\draw [black, thick] (1.5, 6.5) -- (1.5, 7.5);
\draw [black, thick] (2.5, 6.5) -- (2.5, 7.5);
\draw [black, thick] (5.5, 6.5) -- (5.5, 7.5);
\draw [black, thick] (6.5, 6.5) -- (6.5, 7.5);
\draw [black, thick] (0.5, 7.5) -- (0.5, 8.5);
\draw [black, thick] (3.5, 7.5) -- (3.5, 8.5);
\draw [black, thick] (4.5, 7.5) -- (4.5, 8.5);
\draw [black, thick] (9.5, 7.5) -- (9.5, 8.5);
\draw [black, thick] (0.5, 8.5) -- (0.5, 9.5);
\draw [black, thick] (1.5, 8.5) -- (1.5, 9.5);
\draw [black, thick] (2.5, 8.5) -- (2.5, 9.5);
\draw [black, thick] (3.5, 8.5) -- (3.5, 9.5);
\draw [black, thick] (4.5, 8.5) -- (4.5, 9.5);
\draw [black, thick] (5.5, 8.5) -- (5.5, 9.5);
\draw [black, thick] (6.5, 8.5) -- (6.5, 9.5);
\draw [black, thick] (7.5, 8.5) -- (7.5, 9.5);
\draw [black, thick] (8.5, 8.5) -- (8.5, 9.5);
\draw [black, thick] (9.5, 8.5) -- (9.5, 9.5);

        \end{tikzpicture}
        \caption{A Hamilton cycle of $G(10, 10)$ with exactly $10^2 - 10 - 2 = 88$ turns, formed from taking the symmetric difference of the cycle in Figure \ref{10reducedform} with a square overlay. Straights are marked with crosses.}
        \label{10x10max}
    \end{figure}

Hence we have constructed a Hamilton cycle with $n^2-n-2$ turns. From Lemma \ref{oddbound}, this is optimal.
\end{proof}

The half-turn used is not unique; the construction can be rotated/flipped, and the segment can be rotated. As mentioned previously, the above method to converts any minimal cycle on an $2n \times 2n$ grid into a maximal cycle on an $4n \times 4n$ grid (which is noted in \cite{beluhov} in the form of a problem), although examples not of this form exist; one can find a minimal half-form which passes through all cells, passing through some twice (and hence is not a cycle of the $2n \times 2n$ grid).

Claims \ref{square4|n} and \ref{maxsq 2mod4} combined prove Theorem \ref{square beluhov}, concluding the square case.


\section{Maximum turns in rectangles}\label{rect}
We now turn to the general rectangle case of an $m \times n$ grid, where $m > n > 2$ (the $n=2$ case is trivial).

The main result we will prove is that if $n\equiv2\pmod4$, the maximal number of turns is $$mn-2\left\lfloor \frac{2m+n+2}{6}\right\rfloor\text{   or   }mn-2\left\lfloor \frac{2m+n+2}{6}\right\rfloor-2,$$
and otherwise the maximal number of turns is $mn-n$.

\subsection{\texorpdfstring{$n \not\equiv 2 \pmod 4$}{}}

For $G(m,n)$, if both $m$ and $n$ are even, we can define the square overlay, reduced form, and half-form in the same way as the square case. Lemma \ref{hamiltonianhalfform} still holds in the non-square case, and from Corollary \ref{2turns}, there are at least $2 \min\left(\frac m2, \frac n2\right)=n$ turns in the half-form, proven in the same way as \ref{minab} (which holds for any union of cycles that cover all cells of a grid). So there are at least $n$ turns.

One may think that when the half-grid has a side which has odd length, Lemma \ref{oddturn} will still hold, but this is in fact false; Corollary \ref{2turnsodd} does not hold for a half-form because we are now permitted to go over a segment twice, and hence a column/row with odd length may be passed through with no turns by simply going through some cells twice. A modification will be presented later, which gives a different bound.

For now, we want to see when the bound of $n$ is attainable. For the cases where it is, the construction method is identical to that of Claim \ref{square4|n}.

\begin{claim}\label{max4|n}
In $G(m,n)$ with $m>n$, if either
\begin{itemize}
    \item $n$ is divisible by four, or
    \item $n$ is odd,
\end{itemize}
then the maximal number of turns of any Hamilton cycle is exactly $mn - n$ in the former case, and $mn-m$ in the latter.
\end{claim}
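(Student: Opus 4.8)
The plan is to work throughout with \emph{straights} rather than turns, since maximising turns is the same as minimising straights, and to prove a matching lower bound on the number of straights and then a construction attaining it. The clean engine for the lower bound is the following parity observation, immediate from Lemma \ref{evenrowcol}: in any Hamilton cycle, a full row or column of odd length contains an odd, hence positive, number of straights. Indeed, a line of length $\ell$ carrying $t$ turns and $s$ straights has $s=\ell-t$, and since $t$ is even, $\ell$ odd forces $s$ odd, so $s\ge 1$.

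I would apply this as follows. If $n$ is odd, then each of the $m$ columns has odd length $n$ and so contains at least one straight; as every straight lies in a unique column, the cycle has at least $m$ straights, giving at most $mn-m$ turns. For $4\mid n$ I split on the parity of $m$. If $m$ is odd, the same argument applied to the $n$ rows, each of odd length $m$, yields at least $n$ straights and hence at most $mn-n$ turns. If instead $m$ is even, I invoke the half-form: by Lemma \ref{hamiltonianhalfform} it covers $G(\frac{m}{2},\frac{n}{2})$ by cycles, so by the argument of Lemma \ref{minab} it has at least $2\min(\frac{m}{2},\frac{n}{2})=n$ turns (using $m>n$), and by Lemma \ref{turnstostraights} each half-form turn forces a straight, again giving at most $mn-n$ turns. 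This settles the upper bound on turns in every case of the claim.

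For the matching construction when $4\mid n$ and $m$ is even, I would run the reduction of Claim \ref{square4|n} essentially verbatim: take a Hamilton cycle of $G(\frac{m}{2},\frac{n}{2})$ with exactly $n$ turns, which exists by Claim \ref{minconstruct} since $\frac{n}{2}$ is even and $\frac{m}{2}\ge\frac{n}{2}$, use it as the half-form, dilate by $2$, shift each turn according to its orientation to produce a reduced form whose turns each share one segment with the square overlay, and take the symmetric difference with the overlay. Nothing there used squareness, so the output is a Hamilton cycle of $G(m,n)$ with exactly $n$ straights, one per turn of the shifted cycle.

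The remaining constructions, namely $n$ straights when $4\mid n$ and $m$ is odd, and $m$ straights when $n$ is odd, cannot use the half-form, since a $2\times 2$ tiling requires both sides even, and this is where I expect the real difficulty. My model is the direct pattern for $G(m,3)$: zigzag the bottom two rows so that every cell there is a turn, run straight across the top row (forced to be straight only at its $m-2$ interior cells), and close the loop down the two outer columns, each contributing a single vertical straight, for a total of exactly $m$ straights, one per column. The plan is to generalise this to all odd $n$, filling the even-height bulk below the top row by an all-turns path and confining the unavoidable straights to one distinguished line, and then to take the transpose to handle $4\mid n$ with $m$ odd. The hard part will be the bulk-filling: naive fillings, such as stacking horizontal zigzag strips or tiling by vertical $2\times n$ strips, introduce surplus straights at the junctions between strips and can even fail to close into a single cycle, so the turning pattern must be arranged so that straights occur only along the odd-length lines forced by the parity argument, with none to spare. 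Verifying Hamiltonicity and the exact straight count of the resulting family is the crux of the proof.
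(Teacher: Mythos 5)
Your upper bound arguments are complete, correct, and essentially the paper's own: the odd-length-line parity observation (via Lemma \ref{evenrowcol}) settles both the $n$ odd case and the $4\mid n$, $m$ odd case, and your half-form argument for $4\mid n$, $m$ even is exactly the paper's. The same is true of your construction when $4\mid n$ and $m$ is even: running the Claim \ref{square4|n} reduction on a minimum-turn Hamilton cycle of $G\left(\frac m2,\frac n2\right)$ (which exists by Claim \ref{minconstruct}) is precisely what the paper does, and you are right that nothing in that reduction uses squareness.

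The genuine gap is the constructive half of the two remaining cases ($n$ odd, and $4\mid n$ with $m$ odd), which you leave as a ``plan'' whose crux --- the all-turns bulk filling --- you yourself flag as unverified. This is not a routine loose end: it is where most of the paper's work on this claim lies, and the paper does not obtain these cycles from any single uniform pattern. It uses a tower construction for $n$ odd with $4\mid m$ (Figure \ref{12x7max}), direct families $G(m,3)$ and $G(m,5)$ for $m\equiv 2\pmod 4$ (Figures \ref{6x3max} and \ref{14x5max}), sporadic constructions such as $G(10,9)$ and the extensible $G(5,4)$ cycle, and, as the key inductive device, Lemma \ref{extensionlemma}: an extensible cycle (one with a corner $T$ or $U$) in $G(m,n)$ with $k$ straights extends to an extensible cycle in $G(m+4,n+4)$ with exactly $k+4$ straights, which is what allows finitely many base constructions to cover all residues. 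By contrast, there is real reason to doubt your intended pattern in which all straights sit in the top row and the two outer columns: the optimal cycles the paper exhibits in these cases spread their straights over several rows, and large all-turn regions are exactly the configurations that the $2\times2$-subloop arguments (compare Lemma \ref{3col}) show to be on the verge of disconnecting, so it is not clear a cycle of your prescribed shape exists for odd $n\ge 5$. (Your transpose step also silently requires the unproven filling to work when the odd side is the longer one.) As it stands, your proposal proves the claim only in the case $4\mid n$ with $m$ even, together with the upper bounds in all cases.
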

In the case where $4|n$ and $2|m$, follow the proof of Claim \ref{square4|n}, except use a Hamilton cycle of $G\left(\frac m2,\frac n2\right)$ grid with $n$ turns as the half-form. This is guaranteed to exist, from Theorem \ref{minturns}.

    \begin{figure}[H]
        \centering
        \begin{tikzpicture}
\draw[step=1cm,gray,thin,dashed] (0,0) grid (5,4);
\draw [black, thin] (0, 0) -- (5, 0) -- (5, 4) -- (0, 4) -- cycle;
\draw [black, thick] (0.5, 0.5) -- (1.5, 0.5);
\draw [black, thick] (1.5, 0.5) -- (2.5, 0.5);
\draw [black, thick] (2.5, 0.5) -- (3.5, 0.5);
\draw [black, thick] (3.5, 0.5) -- (4.5, 0.5);
\draw [black, thick] (1.5, 1.5) -- (2.5, 1.5);
\draw [black, thick] (2.5, 1.5) -- (3.5, 1.5);
\draw [black, thick] (3.5, 1.5) -- (4.5, 1.5);
\draw [black, thick] (1.5, 2.5) -- (2.5, 2.5);
\draw [black, thick] (2.5, 2.5) -- (3.5, 2.5);
\draw [black, thick] (3.5, 2.5) -- (4.5, 2.5);
\draw [black, thick] (0.5, 3.5) -- (1.5, 3.5);
\draw [black, thick] (1.5, 3.5) -- (2.5, 3.5);
\draw [black, thick] (2.5, 3.5) -- (3.5, 3.5);
\draw [black, thick] (3.5, 3.5) -- (4.5, 3.5);
\draw [black, thick] (0.5, 0.5) -- (0.5, 1.5);
\draw [black, thick] (4.5, 0.5) -- (4.5, 1.5);
\draw [black, thick] (0.5, 1.5) -- (0.5, 2.5);
\draw [black, thick] (1.5, 1.5) -- (1.5, 2.5);
\draw [black, thick] (0.5, 2.5) -- (0.5, 3.5);
\draw [black, thick] (4.5, 2.5) -- (4.5, 3.5);

\draw[step=0.5cm,gray,thin,dashed] (6,0) grid (11.0,4.0);
\draw [black, thin] (6, 0) -- (11.0, 0) -- (11.0, 4.0) -- (6, 4.0) -- cycle;
\draw [black, thick] (6.25, 0.75) -- (6.75, 0.75);
\draw [black, thick] (6.75, 0.75) -- (7.25, 0.75);
\draw [black, thick] (7.25, 0.75) -- (7.75, 0.75);
\draw [black, thick] (10.25, 0.75) -- (10.75, 0.75);
\draw [black, thick] (8.25, 0.75) -- (7.75, 0.75);
\draw [black, thick] (8.75, 0.75) -- (8.25, 0.75);
\draw [black, thick] (9.25, 0.75) -- (8.75, 0.75);
\draw [black, thick] (9.75, 0.75) -- (9.25, 0.75);
\draw [black, thick] (10.25, 0.75) -- (9.75, 0.75);
\draw [black, thick] (8.25, 1.25) -- (8.75, 1.25);
\draw [black, thick] (8.25, 1.25) -- (7.75, 1.25);
\draw [black, thick] (8.75, 1.25) -- (9.25, 1.25);
\draw [black, thick] (9.25, 1.25) -- (9.75, 1.25);
\draw [black, thick] (9.75, 1.25) -- (10.25, 1.25);
\draw [black, thick] (10.25, 1.25) -- (10.75, 1.25);
\draw [black, thick] (7.75, 2.75) -- (8.25, 2.75);
\draw [black, thick] (8.25, 2.75) -- (8.75, 2.75);
\draw [black, thick] (8.75, 2.75) -- (9.25, 2.75);
\draw [black, thick] (9.25, 2.75) -- (9.75, 2.75);
\draw [black, thick] (9.75, 2.75) -- (10.25, 2.75);
\draw [black, thick] (10.25, 2.75) -- (10.75, 2.75);
\draw [black, thick] (6.25, 3.25) -- (6.75, 3.25);
\draw [black, thick] (6.75, 3.25) -- (7.25, 3.25);
\draw [black, thick] (7.25, 3.25) -- (7.75, 3.25);
\draw [black, thick] (7.75, 3.25) -- (8.25, 3.25);
\draw [black, thick] (8.25, 3.25) -- (8.75, 3.25);
\draw [black, thick] (8.75, 3.25) -- (9.25, 3.25);
\draw [black, thick] (9.25, 3.25) -- (9.75, 3.25);
\draw [black, thick] (9.75, 3.25) -- (10.25, 3.25);
\draw [black, thick] (10.25, 3.25) -- (10.75, 3.25);
\draw [black, thick] (6.25, 0.75) -- (6.25, 1.25);
\draw [black, thick] (10.75, 0.75) -- (10.75, 1.25);
\draw [black, thick] (6.25, 1.25) -- (6.25, 1.75);
\draw [black, thick] (7.75, 1.25) -- (7.75, 1.75);
\draw [black, thick] (6.25, 1.75) -- (6.25, 2.25);
\draw [black, thick] (7.75, 1.75) -- (7.75, 2.25);
\draw [black, thick] (6.25, 2.25) -- (6.25, 2.75);
\draw [black, thick] (7.75, 2.25) -- (7.75, 2.75);
\draw [black, thick] (6.25, 2.75) -- (6.25, 3.25);
\draw [black, thick] (10.75, 2.75) -- (10.75, 3.25);
        \end{tikzpicture}
        \caption{Converting a $G(5,4)$ half-form into a $G(10,8)$ reduced form.}
        \label{10x8reduced}
    \end{figure}
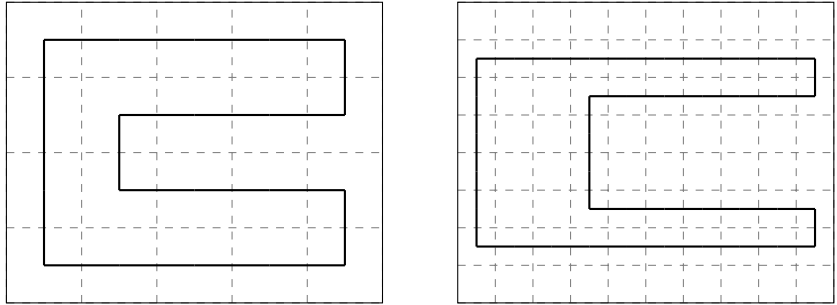

    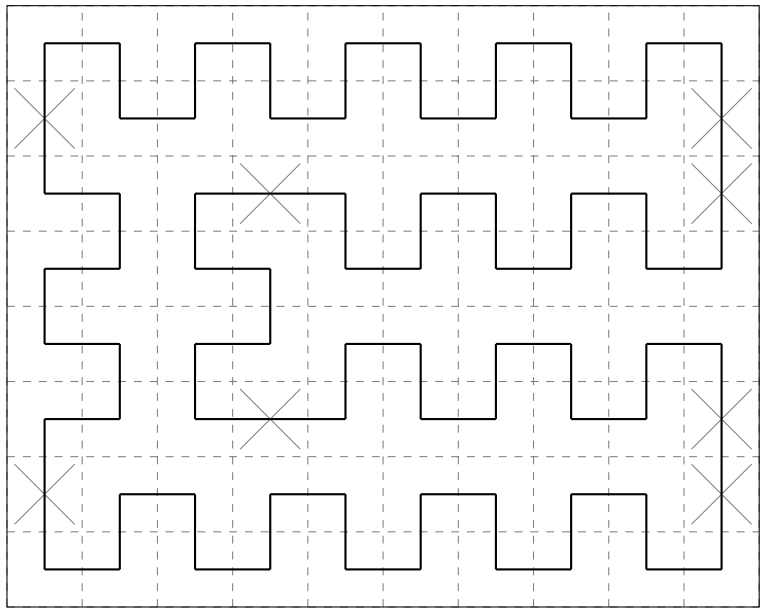
\begin{figure}[H]
        \centering
        \begin{tikzpicture}
\draw[step=1cm,gray,thin,dashed] (0,0) grid (10,8);
\draw [black, thin] (0, 0) -- (10, 0) -- (10, 8) -- (0, 8) -- cycle;
\draw [gray, thin] (0.1, 1.1) -- (0.9, 1.9);
\draw [gray, thin] (0.9, 1.1) -- (0.1, 1.9);
\draw [gray, thin] (9.1, 1.1) -- (9.9, 1.9);
\draw [gray, thin] (9.9, 1.1) -- (9.1, 1.9);
\draw [gray, thin] (3.1, 2.1) -- (3.9, 2.9);
\draw [gray, thin] (3.9, 2.1) -- (3.1, 2.9);
\draw [gray, thin] (9.1, 2.1) -- (9.9, 2.9);
\draw [gray, thin] (9.9, 2.1) -- (9.1, 2.9);
\draw [gray, thin] (3.1, 5.1) -- (3.9, 5.9);
\draw [gray, thin] (3.9, 5.1) -- (3.1, 5.9);
\draw [gray, thin] (9.1, 5.1) -- (9.9, 5.9);
\draw [gray, thin] (9.9, 5.1) -- (9.1, 5.9);
\draw [gray, thin] (0.1, 6.1) -- (0.9, 6.9);
\draw [gray, thin] (0.9, 6.1) -- (0.1, 6.9);
\draw [gray, thin] (9.1, 6.1) -- (9.9, 6.9);
\draw [gray, thin] (9.9, 6.1) -- (9.1, 6.9);
\draw [black, thick] (0.5, 0.5) -- (1.5, 0.5);
\draw [black, thick] (2.5, 0.5) -- (3.5, 0.5);
\draw [black, thick] (4.5, 0.5) -- (5.5, 0.5);
\draw [black, thick] (6.5, 0.5) -- (7.5, 0.5);
\draw [black, thick] (8.5, 0.5) -- (9.5, 0.5);
\draw [black, thick] (1.5, 1.5) -- (2.5, 1.5);
\draw [black, thick] (3.5, 1.5) -- (4.5, 1.5);
\draw [black, thick] (5.5, 1.5) -- (6.5, 1.5);
\draw [black, thick] (7.5, 1.5) -- (8.5, 1.5);
\draw [black, thick] (0.5, 2.5) -- (1.5, 2.5);
\draw [black, thick] (2.5, 2.5) -- (3.5, 2.5);
\draw [black, thick] (3.5, 2.5) -- (4.5, 2.5);
\draw [black, thick] (5.5, 2.5) -- (6.5, 2.5);
\draw [black, thick] (7.5, 2.5) -- (8.5, 2.5);
\draw [black, thick] (0.5, 3.5) -- (1.5, 3.5);
\draw [black, thick] (2.5, 3.5) -- (3.5, 3.5);
\draw [black, thick] (4.5, 3.5) -- (5.5, 3.5);
\draw [black, thick] (6.5, 3.5) -- (7.5, 3.5);
\draw [black, thick] (8.5, 3.5) -- (9.5, 3.5);
\draw [black, thick] (0.5, 4.5) -- (1.5, 4.5);
\draw [black, thick] (2.5, 4.5) -- (3.5, 4.5);
\draw [black, thick] (4.5, 4.5) -- (5.5, 4.5);
\draw [black, thick] (6.5, 4.5) -- (7.5, 4.5);
\draw [black, thick] (8.5, 4.5) -- (9.5, 4.5);
\draw [black, thick] (0.5, 5.5) -- (1.5, 5.5);
\draw [black, thick] (2.5, 5.5) -- (3.5, 5.5);
\draw [black, thick] (3.5, 5.5) -- (4.5, 5.5);
\draw [black, thick] (5.5, 5.5) -- (6.5, 5.5);
\draw [black, thick] (7.5, 5.5) -- (8.5, 5.5);
\draw [black, thick] (1.5, 6.5) -- (2.5, 6.5);
\draw [black, thick] (3.5, 6.5) -- (4.5, 6.5);
\draw [black, thick] (5.5, 6.5) -- (6.5, 6.5);
\draw [black, thick] (7.5, 6.5) -- (8.5, 6.5);
\draw [black, thick] (0.5, 7.5) -- (1.5, 7.5);
\draw [black, thick] (2.5, 7.5) -- (3.5, 7.5);
\draw [black, thick] (4.5, 7.5) -- (5.5, 7.5);
\draw [black, thick] (6.5, 7.5) -- (7.5, 7.5);
\draw [black, thick] (8.5, 7.5) -- (9.5, 7.5);
\draw [black, thick] (0.5, 0.5) -- (0.5, 1.5);
\draw [black, thick] (1.5, 0.5) -- (1.5, 1.5);
\draw [black, thick] (2.5, 0.5) -- (2.5, 1.5);
\draw [black, thick] (3.5, 0.5) -- (3.5, 1.5);
\draw [black, thick] (4.5, 0.5) -- (4.5, 1.5);
\draw [black, thick] (5.5, 0.5) -- (5.5, 1.5);
\draw [black, thick] (6.5, 0.5) -- (6.5, 1.5);
\draw [black, thick] (7.5, 0.5) -- (7.5, 1.5);
\draw [black, thick] (8.5, 0.5) -- (8.5, 1.5);
\draw [black, thick] (9.5, 0.5) -- (9.5, 1.5);
\draw [black, thick] (0.5, 1.5) -- (0.5, 2.5);
\draw [black, thick] (9.5, 1.5) -- (9.5, 2.5);
\draw [black, thick] (1.5, 2.5) -- (1.5, 3.5);
\draw [black, thick] (2.5, 2.5) -- (2.5, 3.5);
\draw [black, thick] (4.5, 2.5) -- (4.5, 3.5);
\draw [black, thick] (5.5, 2.5) -- (5.5, 3.5);
\draw [black, thick] (6.5, 2.5) -- (6.5, 3.5);
\draw [black, thick] (7.5, 2.5) -- (7.5, 3.5);
\draw [black, thick] (8.5, 2.5) -- (8.5, 3.5);
\draw [black, thick] (9.5, 2.5) -- (9.5, 3.5);
\draw [black, thick] (0.5, 3.5) -- (0.5, 4.5);
\draw [black, thick] (3.5, 3.5) -- (3.5, 4.5);
\draw [black, thick] (1.5, 4.5) -- (1.5, 5.5);
\draw [black, thick] (2.5, 4.5) -- (2.5, 5.5);
\draw [black, thick] (4.5, 4.5) -- (4.5, 5.5);
\draw [black, thick] (5.5, 4.5) -- (5.5, 5.5);
\draw [black, thick] (6.5, 4.5) -- (6.5, 5.5);
\draw [black, thick] (7.5, 4.5) -- (7.5, 5.5);
\draw [black, thick] (8.5, 4.5) -- (8.5, 5.5);
\draw [black, thick] (9.5, 4.5) -- (9.5, 5.5);
\draw [black, thick] (0.5, 5.5) -- (0.5, 6.5);
\draw [black, thick] (9.5, 5.5) -- (9.5, 6.5);
\draw [black, thick] (0.5, 6.5) -- (0.5, 7.5);
\draw [black, thick] (1.5, 6.5) -- (1.5, 7.5);
\draw [black, thick] (2.5, 6.5) -- (2.5, 7.5);
\draw [black, thick] (3.5, 6.5) -- (3.5, 7.5);
\draw [black, thick] (4.5, 6.5) -- (4.5, 7.5);
\draw [black, thick] (5.5, 6.5) -- (5.5, 7.5);
\draw [black, thick] (6.5, 6.5) -- (6.5, 7.5);
\draw [black, thick] (7.5, 6.5) -- (7.5, 7.5);
\draw [black, thick] (8.5, 6.5) -- (8.5, 7.5);
\draw [black, thick] (9.5, 6.5) -- (9.5, 7.5);

        \end{tikzpicture}
        \caption{A Hamilton cycle of $G(10,8)$ with $10 \times 8 - 8 = 72$ turns, formed from taking the symmetric difference of the reduced form in Figure \ref{10x8reduced} with a square overlay. Straights are marked with crosses.}
        \label{10x8max}
    \end{figure}

Now consider the case where $n$ is odd. By Lemma \ref{evenrowcol}, there are an even number of turns in each column, and hence every column has an odd number of straights, giving a bound of at least $m$ straights. When $m$ is divisible by $4$, this is attainable, using a tower-like construction. An example is shown below.

    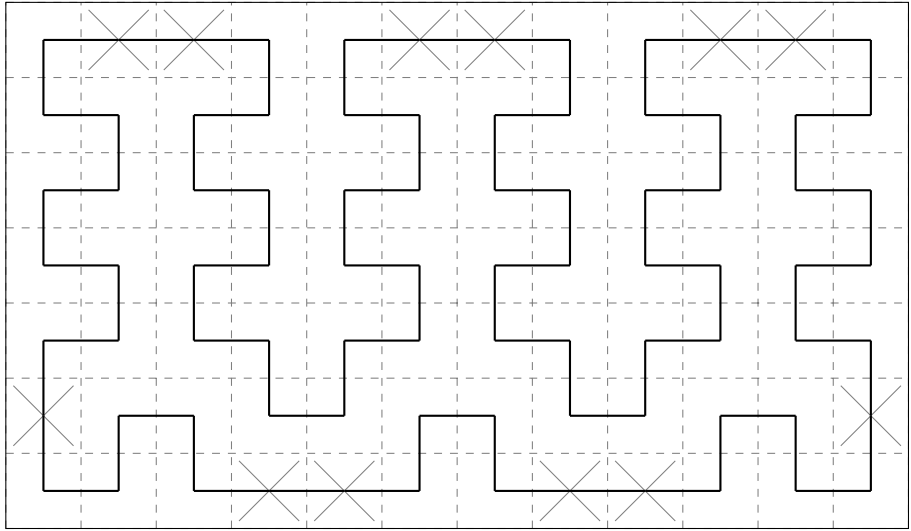
\begin{figure}[H]
        \centering
        \begin{tikzpicture}
\draw[step=1cm,gray,thin,dashed] (0,0) grid (12,7);
\draw [black, thin] (0, 0) -- (12, 0) -- (12, 7) -- (0, 7) -- cycle;
\draw [gray, thin] (3.1, 0.10) -- (3.9, 0.9);
\draw [gray, thin] (3.9, 0.10) -- (3.1, 0.9);
\draw [gray, thin] (4.1, 0.10) -- (4.9, 0.9);
\draw [gray, thin] (4.9, 0.10) -- (4.1, 0.9);
\draw [gray, thin] (7.1, 0.10) -- (7.9, 0.9);
\draw [gray, thin] (7.9, 0.10) -- (7.1, 0.9);
\draw [gray, thin] (8.1, 0.10) -- (8.9, 0.9);
\draw [gray, thin] (8.9, 0.10) -- (8.1, 0.9);
\draw [gray, thin] (0.10, 1.1) -- (0.9, 1.9);
\draw [gray, thin] (0.9, 1.1) -- (0.10, 1.9);
\draw [gray, thin] (11.1, 1.1) -- (11.9, 1.9);
\draw [gray, thin] (11.9, 1.1) -- (11.1, 1.9);
\draw [gray, thin] (1.1, 6.1) -- (1.9, 6.9);
\draw [gray, thin] (1.9, 6.1) -- (1.1, 6.9);
\draw [gray, thin] (2.1, 6.1) -- (2.9, 6.9);
\draw [gray, thin] (2.9, 6.1) -- (2.1, 6.9);
\draw [gray, thin] (5.1, 6.1) -- (5.9, 6.9);
\draw [gray, thin] (5.9, 6.1) -- (5.1, 6.9);
\draw [gray, thin] (6.1, 6.1) -- (6.9, 6.9);
\draw [gray, thin] (6.9, 6.1) -- (6.1, 6.9);
\draw [gray, thin] (9.1, 6.1) -- (9.9, 6.9);
\draw [gray, thin] (9.9, 6.1) -- (9.1, 6.9);
\draw [gray, thin] (10.1, 6.1) -- (10.9, 6.9);
\draw [gray, thin] (10.9, 6.1) -- (10.1, 6.9);
\draw [black, thick] (0.5, 0.5) -- (1.5, 0.5);
\draw [black, thick] (2.5, 0.5) -- (3.5, 0.5);
\draw [black, thick] (3.5, 0.5) -- (4.5, 0.5);
\draw [black, thick] (4.5, 0.5) -- (5.5, 0.5);
\draw [black, thick] (6.5, 0.5) -- (7.5, 0.5);
\draw [black, thick] (7.5, 0.5) -- (8.5, 0.5);
\draw [black, thick] (8.5, 0.5) -- (9.5, 0.5);
\draw [black, thick] (10.5, 0.5) -- (11.5, 0.5);
\draw [black, thick] (1.5, 1.5) -- (2.5, 1.5);
\draw [black, thick] (3.5, 1.5) -- (4.5, 1.5);
\draw [black, thick] (5.5, 1.5) -- (6.5, 1.5);
\draw [black, thick] (7.5, 1.5) -- (8.5, 1.5);
\draw [black, thick] (9.5, 1.5) -- (10.5, 1.5);
\draw [black, thick] (0.5, 2.5) -- (1.5, 2.5);
\draw [black, thick] (2.5, 2.5) -- (3.5, 2.5);
\draw [black, thick] (4.5, 2.5) -- (5.5, 2.5);
\draw [black, thick] (6.5, 2.5) -- (7.5, 2.5);
\draw [black, thick] (8.5, 2.5) -- (9.5, 2.5);
\draw [black, thick] (10.5, 2.5) -- (11.5, 2.5);
\draw [black, thick] (0.5, 3.5) -- (1.5, 3.5);
\draw [black, thick] (2.5, 3.5) -- (3.5, 3.5);
\draw [black, thick] (4.5, 3.5) -- (5.5, 3.5);
\draw [black, thick] (6.5, 3.5) -- (7.5, 3.5);
\draw [black, thick] (8.5, 3.5) -- (9.5, 3.5);
\draw [black, thick] (10.5, 3.5) -- (11.5, 3.5);
\draw [black, thick] (0.5, 4.5) -- (1.5, 4.5);
\draw [black, thick] (2.5, 4.5) -- (3.5, 4.5);
\draw [black, thick] (4.5, 4.5) -- (5.5, 4.5);
\draw [black, thick] (6.5, 4.5) -- (7.5, 4.5);
\draw [black, thick] (8.5, 4.5) -- (9.5, 4.5);
\draw [black, thick] (10.5, 4.5) -- (11.5, 4.5);
\draw [black, thick] (0.5, 5.5) -- (1.5, 5.5);
\draw [black, thick] (2.5, 5.5) -- (3.5, 5.5);
\draw [black, thick] (4.5, 5.5) -- (5.5, 5.5);
\draw [black, thick] (6.5, 5.5) -- (7.5, 5.5);
\draw [black, thick] (8.5, 5.5) -- (9.5, 5.5);
\draw [black, thick] (10.5, 5.5) -- (11.5, 5.5);
\draw [black, thick] (0.5, 6.5) -- (1.5, 6.5);
\draw [black, thick] (1.5, 6.5) -- (2.5, 6.5);
\draw [black, thick] (2.5, 6.5) -- (3.5, 6.5);
\draw [black, thick] (4.5, 6.5) -- (5.5, 6.5);
\draw [black, thick] (5.5, 6.5) -- (6.5, 6.5);
\draw [black, thick] (6.5, 6.5) -- (7.5, 6.5);
\draw [black, thick] (8.5, 6.5) -- (9.5, 6.5);
\draw [black, thick] (9.5, 6.5) -- (10.5, 6.5);
\draw [black, thick] (10.5, 6.5) -- (11.5, 6.5);
\draw [black, thick] (0.5, 0.5) -- (0.5, 1.5);
\draw [black, thick] (1.5, 0.5) -- (1.5, 1.5);
\draw [black, thick] (2.5, 0.5) -- (2.5, 1.5);
\draw [black, thick] (5.5, 0.5) -- (5.5, 1.5);
\draw [black, thick] (6.5, 0.5) -- (6.5, 1.5);
\draw [black, thick] (9.5, 0.5) -- (9.5, 1.5);
\draw [black, thick] (10.5, 0.5) -- (10.5, 1.5);
\draw [black, thick] (11.5, 0.5) -- (11.5, 1.5);
\draw [black, thick] (0.5, 1.5) -- (0.5, 2.5);
\draw [black, thick] (3.5, 1.5) -- (3.5, 2.5);
\draw [black, thick] (4.5, 1.5) -- (4.5, 2.5);
\draw [black, thick] (7.5, 1.5) -- (7.5, 2.5);
\draw [black, thick] (8.5, 1.5) -- (8.5, 2.5);
\draw [black, thick] (11.5, 1.5) -- (11.5, 2.5);
\draw [black, thick] (1.5, 2.5) -- (1.5, 3.5);
\draw [black, thick] (2.5, 2.5) -- (2.5, 3.5);
\draw [black, thick] (5.5, 2.5) -- (5.5, 3.5);
\draw [black, thick] (6.5, 2.5) -- (6.5, 3.5);
\draw [black, thick] (9.5, 2.5) -- (9.5, 3.5);
\draw [black, thick] (10.5, 2.5) -- (10.5, 3.5);
\draw [black, thick] (0.5, 3.5) -- (0.5, 4.5);
\draw [black, thick] (3.5, 3.5) -- (3.5, 4.5);
\draw [black, thick] (4.5, 3.5) -- (4.5, 4.5);
\draw [black, thick] (7.5, 3.5) -- (7.5, 4.5);
\draw [black, thick] (8.5, 3.5) -- (8.5, 4.5);
\draw [black, thick] (11.5, 3.5) -- (11.5, 4.5);
\draw [black, thick] (1.5, 4.5) -- (1.5, 5.5);
\draw [black, thick] (2.5, 4.5) -- (2.5, 5.5);
\draw [black, thick] (5.5, 4.5) -- (5.5, 5.5);
\draw [black, thick] (6.5, 4.5) -- (6.5, 5.5);
\draw [black, thick] (9.5, 4.5) -- (9.5, 5.5);
\draw [black, thick] (10.5, 4.5) -- (10.5, 5.5);
\draw [black, thick] (0.5, 5.5) -- (0.5, 6.5);
\draw [black, thick] (3.5, 5.5) -- (3.5, 6.5);
\draw [black, thick] (4.5, 5.5) -- (4.5, 6.5);
\draw [black, thick] (7.5, 5.5) -- (7.5, 6.5);
\draw [black, thick] (8.5, 5.5) -- (8.5, 6.5);
\draw [black, thick] (11.5, 5.5) -- (11.5, 6.5);

        \end{tikzpicture}
        \caption{A Hamilton cycle on $G(12,7)$, with exactly one straight in each column.}
        \label{12x7max}
    \end{figure}

This can easily be generalised to any odd $n$ and $4|m$, by simply extending the heights of the ``towers" in the above construction, and duplicating the towers horizontally.

For the case where $4\nmid m$, we provide some constructions first:

\begin{figure}[H]
        \centering
        \begin{tikzpicture}
\draw[step=1cm,gray,thin,dashed] (0,0) grid (6,3);
\draw [black, thin] (0, 0) -- (6, 0) -- (6, 3) -- (0, 3) -- cycle;
\draw [gray, thin] (1.1, 0.10) -- (1.9, 0.9);
\draw [gray, thin] (1.9, 0.10) -- (1.1, 0.9);
\draw [gray, thin] (2.1, 0.10) -- (2.9, 0.9);
\draw [gray, thin] (2.9, 0.10) -- (2.1, 0.9);
\draw [gray, thin] (3.1, 0.10) -- (3.9, 0.9);
\draw [gray, thin] (3.9, 0.10) -- (3.1, 0.9);
\draw [gray, thin] (4.1, 0.10) -- (4.9, 0.9);
\draw [gray, thin] (4.9, 0.10) -- (4.1, 0.9);
\draw [gray, thin] (0.10, 1.1) -- (0.9, 1.9);
\draw [gray, thin] (0.9, 1.1) -- (0.10, 1.9);
\draw [gray, thin] (5.1, 1.1) -- (5.9, 1.9);
\draw [gray, thin] (5.9, 1.1) -- (5.1, 1.9);
\draw [black, thick] (0.5, 0.5) -- (1.5, 0.5);
\draw [black, thick] (1.5, 0.5) -- (2.5, 0.5);
\draw [black, thick] (2.5, 0.5) -- (3.5, 0.5);
\draw [black, thick] (3.5, 0.5) -- (4.5, 0.5);
\draw [black, thick] (4.5, 0.5) -- (5.5, 0.5);
\draw [black, thick] (1.5, 1.5) -- (2.5, 1.5);
\draw [black, thick] (3.5, 1.5) -- (4.5, 1.5);
\draw [black, thick] (0.5, 2.5) -- (1.5, 2.5);
\draw [black, thick] (2.5, 2.5) -- (3.5, 2.5);
\draw [black, thick] (4.5, 2.5) -- (5.5, 2.5);
\draw [black, thick] (0.5, 0.5) -- (0.5, 1.5);
\draw [black, thick] (5.5, 0.5) -- (5.5, 1.5);
\draw [black, thick] (0.5, 1.5) -- (0.5, 2.5);
\draw [black, thick] (1.5, 1.5) -- (1.5, 2.5);
\draw [black, thick] (2.5, 1.5) -- (2.5, 2.5);
\draw [black, thick] (3.5, 1.5) -- (3.5, 2.5);
\draw [black, thick] (4.5, 1.5) -- (4.5, 2.5);
\draw [black, thick] (5.5, 1.5) -- (5.5, 2.5);
        \end{tikzpicture}
        \caption{A Hamilton cycle of $G(6,3)$ with $6 \times 3 - 6 = 12$ turns.}
        \label{6x3max}
\end{figure}
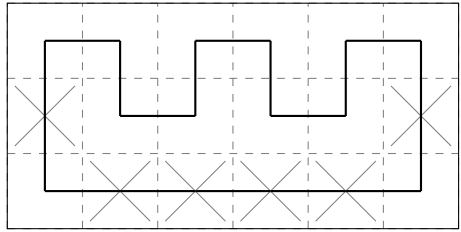

\begin{figure}[H]
        \centering
        \begin{tikzpicture}
\draw[step=0.8cm,gray,thin,dashed] (0,0) grid (11.200,4.0);
\draw [black, thin] (0, 0) -- (11.200, 0) -- (11.200, 4.0) -- (0, 4.0) -- cycle;
\draw [gray, thin] (0.8800, 0.080) -- (1.52, 0.72);
\draw [gray, thin] (1.52, 0.080) -- (0.8800, 0.72);
\draw [gray, thin] (3.28, 0.080) -- (3.92, 0.72);
\draw [gray, thin] (3.92, 0.080) -- (3.28, 0.72);
\draw [gray, thin] (5.68, 0.080) -- (6.32, 0.72);
\draw [gray, thin] (6.32, 0.080) -- (5.68, 0.72);
\draw [gray, thin] (6.48, 0.080) -- (7.120, 0.72);
\draw [gray, thin] (7.120, 0.080) -- (6.48, 0.72);
\draw [gray, thin] (8.88, 0.080) -- (9.520, 0.72);
\draw [gray, thin] (9.520, 0.080) -- (8.88, 0.72);
\draw [gray, thin] (9.68, 0.080) -- (10.32, 0.72);
\draw [gray, thin] (10.32, 0.080) -- (9.68, 0.72);
\draw [gray, thin] (1.68, 0.8800) -- (2.32, 1.52);
\draw [gray, thin] (2.32, 0.8800) -- (1.68, 1.52);
\draw [gray, thin] (2.480, 0.8800) -- (3.12, 1.52);
\draw [gray, thin] (3.12, 0.8800) -- (2.480, 1.52);
\draw [gray, thin] (4.08, 1.68) -- (4.720, 2.32);
\draw [gray, thin] (4.720, 1.68) -- (4.08, 2.32);
\draw [gray, thin] (4.88, 1.68) -- (5.52, 2.32);
\draw [gray, thin] (5.52, 1.68) -- (4.88, 2.32);
\draw [gray, thin] (7.28, 1.68) -- (7.920, 2.32);
\draw [gray, thin] (7.920, 1.68) -- (7.28, 2.32);
\draw [gray, thin] (8.08, 1.68) -- (8.72, 2.32);
\draw [gray, thin] (8.72, 1.68) -- (8.08, 2.32);
\draw [gray, thin] (0.080, 2.480) -- (0.72, 3.12);
\draw [gray, thin] (0.72, 2.480) -- (0.080, 3.12);
\draw [gray, thin] (10.48, 2.480) -- (11.120, 3.12);
\draw [gray, thin] (11.120, 2.480) -- (10.48, 3.12);
\draw [black, thick] (0.4, 0.4) -- (1.20, 0.4);
\draw [black, thick] (1.20, 0.4) -- (2.0, 0.4);
\draw [black, thick] (2.80, 0.4) -- (3.6, 0.4);
\draw [black, thick] (3.6, 0.4) -- (4.4, 0.4);
\draw [black, thick] (5.2, 0.4) -- (6.0, 0.4);
\draw [black, thick] (6.0, 0.4) -- (6.800, 0.4);
\draw [black, thick] (6.800, 0.4) -- (7.60, 0.4);
\draw [black, thick] (8.4, 0.4) -- (9.200, 0.4);
\draw [black, thick] (9.200, 0.4) -- (10.0, 0.4);
\draw [black, thick] (10.0, 0.4) -- (10.8, 0.4);
\draw [black, thick] (0.4, 1.20) -- (1.20, 1.20);
\draw [black, thick] (3.6, 1.20) -- (4.4, 1.20);
\draw [black, thick] (5.2, 1.20) -- (6.0, 1.20);
\draw [black, thick] (6.800, 1.20) -- (7.60, 1.20);
\draw [black, thick] (8.4, 1.20) -- (9.200, 1.20);
\draw [black, thick] (10.0, 1.20) -- (10.8, 1.20);
\draw [black, thick] (0.4, 2.0) -- (1.20, 2.0);
\draw [black, thick] (2.0, 2.0) -- (2.80, 2.0);
\draw [black, thick] (3.6, 2.0) -- (4.4, 2.0);
\draw [black, thick] (4.4, 2.0) -- (5.2, 2.0);
\draw [black, thick] (5.2, 2.0) -- (6.0, 2.0);
\draw [black, thick] (6.800, 2.0) -- (7.60, 2.0);
\draw [black, thick] (7.60, 2.0) -- (8.4, 2.0);
\draw [black, thick] (8.4, 2.0) -- (9.200, 2.0);
\draw [black, thick] (10.0, 2.0) -- (10.8, 2.0);
\draw [black, thick] (1.20, 2.80) -- (2.0, 2.80);
\draw [black, thick] (2.80, 2.80) -- (3.6, 2.80);
\draw [black, thick] (4.4, 2.80) -- (5.2, 2.80);
\draw [black, thick] (6.0, 2.80) -- (6.800, 2.80);
\draw [black, thick] (7.60, 2.80) -- (8.4, 2.80);
\draw [black, thick] (9.200, 2.80) -- (10.0, 2.80);
\draw [black, thick] (0.4, 3.6) -- (1.20, 3.6);
\draw [black, thick] (2.0, 3.6) -- (2.80, 3.6);
\draw [black, thick] (3.6, 3.6) -- (4.4, 3.6);
\draw [black, thick] (5.2, 3.6) -- (6.0, 3.6);
\draw [black, thick] (6.800, 3.6) -- (7.60, 3.6);
\draw [black, thick] (8.4, 3.6) -- (9.20, 3.6);
\draw [black, thick] (10.0, 3.6) -- (10.8, 3.6);
\draw [black, thick] (0.4, 0.4) -- (0.4, 1.20);
\draw [black, thick] (2.0, 0.4) -- (2.0, 1.20);
\draw [black, thick] (2.80, 0.4) -- (2.80, 1.20);
\draw [black, thick] (4.4, 0.4) -- (4.4, 1.20);
\draw [black, thick] (5.2, 0.4) -- (5.2, 1.20);
\draw [black, thick] (7.60, 0.4) -- (7.60, 1.20);
\draw [black, thick] (8.4, 0.4) -- (8.4, 1.20);
\draw [black, thick] (10.8, 0.4) -- (10.8, 1.20);
\draw [black, thick] (1.20, 1.20) -- (1.20, 2.0);
\draw [black, thick] (2.0, 1.20) -- (2.0, 2.0);
\draw [black, thick] (2.80, 1.20) -- (2.80, 2.0);
\draw [black, thick] (3.6, 1.20) -- (3.6, 2.0);
\draw [black, thick] (6.0, 1.20) -- (6.0, 2.0);
\draw [black, thick] (6.800, 1.20) -- (6.800, 2.0);
\draw [black, thick] (9.200, 1.20) -- (9.200, 2.0);
\draw [black, thick] (10.0, 1.20) -- (10.0, 2.0);
\draw [black, thick] (0.4, 2.0) -- (0.4, 2.80);
\draw [black, thick] (10.8, 2.0) -- (10.8, 2.80);
\draw [black, thick] (0.4, 2.80) -- (0.4, 3.6);
\draw [black, thick] (1.20, 2.80) -- (1.20, 3.6);
\draw [black, thick] (2.0, 2.80) -- (2.0, 3.6);
\draw [black, thick] (2.80, 2.80) -- (2.80, 3.6);
\draw [black, thick] (3.6, 2.80) -- (3.6, 3.6);
\draw [black, thick] (4.4, 2.80) -- (4.4, 3.6);
\draw [black, thick] (5.2, 2.80) -- (5.2, 3.6);
\draw [black, thick] (6.0, 2.80) -- (6.0, 3.6);
\draw [black, thick] (6.800, 2.80) -- (6.800, 3.6);
\draw [black, thick] (7.60, 2.80) -- (7.60, 3.6);
\draw [black, thick] (8.4, 2.80) -- (8.4, 3.6);
\draw [black, thick] (9.200, 2.80) -- (9.200, 3.6);
\draw [black, thick] (10.0, 2.80) -- (10.0, 3.6);
\draw [black, thick] (10.8, 2.80) -- (10.8, 3.6);

        \end{tikzpicture}
        \caption{A Hamilton cycle on $G(14,5)$ with $14 \times 5 - 14 = 56$ turns.}
        \label{14x5max}
\end{figure}
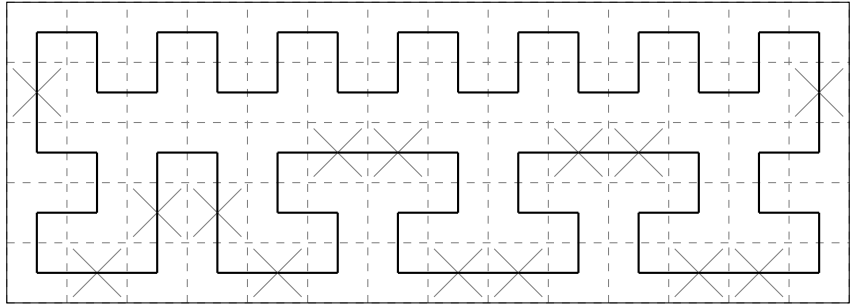

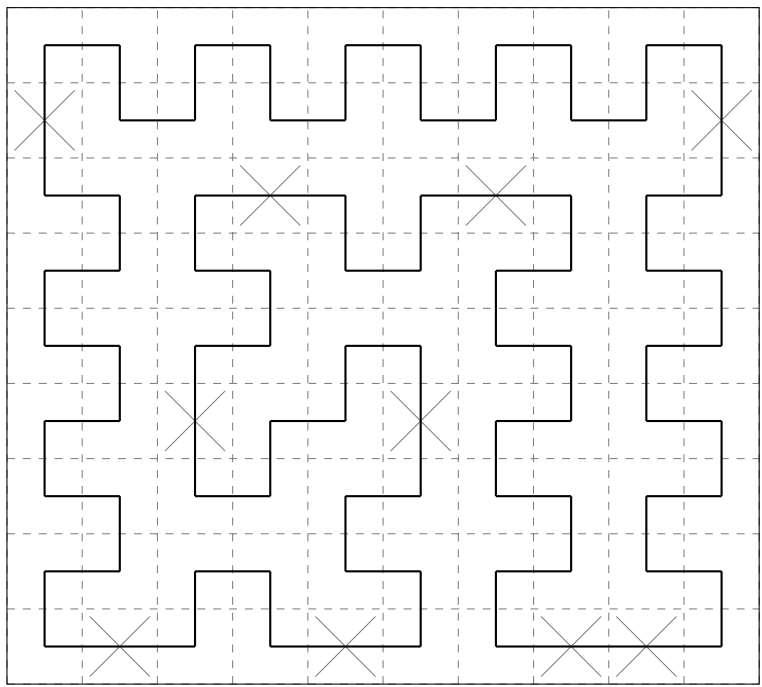
\begin{figure}[H]
        \centering
        \begin{tikzpicture}
\draw[step=1cm,gray,thin,dashed] (0,0) grid (10,9);
\draw [black, thin] (0, 0) -- (10, 0) -- (10, 9) -- (0, 9) -- cycle;
\draw [gray, thin] (1.1, 0.1) -- (1.9, 0.9);
\draw [gray, thin] (1.9, 0.1) -- (1.1, 0.9);
\draw [gray, thin] (4.1, 0.1) -- (4.9, 0.9);
\draw [gray, thin] (4.9, 0.1) -- (4.1, 0.9);
\draw [gray, thin] (7.1, 0.1) -- (7.9, 0.9);
\draw [gray, thin] (7.9, 0.1) -- (7.1, 0.9);
\draw [gray, thin] (8.1, 0.1) -- (8.9, 0.9);
\draw [gray, thin] (8.9, 0.1) -- (8.1, 0.9);
\draw [gray, thin] (2.1, 3.1) -- (2.9, 3.9);
\draw [gray, thin] (2.9, 3.1) -- (2.1, 3.9);
\draw [gray, thin] (5.1, 3.1) -- (5.9, 3.9);
\draw [gray, thin] (5.9, 3.1) -- (5.1, 3.9);
\draw [gray, thin] (3.1, 6.1) -- (3.9, 6.9);
\draw [gray, thin] (3.9, 6.1) -- (3.1, 6.9);
\draw [gray, thin] (6.1, 6.1) -- (6.9, 6.9);
\draw [gray, thin] (6.9, 6.1) -- (6.1, 6.9);
\draw [gray, thin] (0.1, 7.1) -- (0.9, 7.9);
\draw [gray, thin] (0.9, 7.1) -- (0.1, 7.9);
\draw [gray, thin] (9.1, 7.1) -- (9.9, 7.9);
\draw [gray, thin] (9.9, 7.1) -- (9.1, 7.9);
\draw [black, thick] (0.5, 0.5) -- (1.5, 0.5);
\draw [black, thick] (1.5, 0.5) -- (2.5, 0.5);
\draw [black, thick] (3.5, 0.5) -- (4.5, 0.5);
\draw [black, thick] (4.5, 0.5) -- (5.5, 0.5);
\draw [black, thick] (6.5, 0.5) -- (7.5, 0.5);
\draw [black, thick] (7.5, 0.5) -- (8.5, 0.5);
\draw [black, thick] (8.5, 0.5) -- (9.5, 0.5);
\draw [black, thick] (0.5, 1.5) -- (1.5, 1.5);
\draw [black, thick] (2.5, 1.5) -- (3.5, 1.5);
\draw [black, thick] (4.5, 1.5) -- (5.5, 1.5);
\draw [black, thick] (6.5, 1.5) -- (7.5, 1.5);
\draw [black, thick] (8.5, 1.5) -- (9.5, 1.5);
\draw [black, thick] (0.5, 2.5) -- (1.5, 2.5);
\draw [black, thick] (2.5, 2.5) -- (3.5, 2.5);
\draw [black, thick] (4.5, 2.5) -- (5.5, 2.5);
\draw [black, thick] (6.5, 2.5) -- (7.5, 2.5);
\draw [black, thick] (8.5, 2.5) -- (9.5, 2.5);
\draw [black, thick] (0.5, 3.5) -- (1.5, 3.5);
\draw [black, thick] (3.5, 3.5) -- (4.5, 3.5);
\draw [black, thick] (6.5, 3.5) -- (7.5, 3.5);
\draw [black, thick] (8.5, 3.5) -- (9.5, 3.5);
\draw [black, thick] (0.5, 4.5) -- (1.5, 4.5);
\draw [black, thick] (2.5, 4.5) -- (3.5, 4.5);
\draw [black, thick] (4.5, 4.5) -- (5.5, 4.5);
\draw [black, thick] (6.5, 4.5) -- (7.5, 4.5);
\draw [black, thick] (8.5, 4.5) -- (9.5, 4.5);
\draw [black, thick] (0.5, 5.5) -- (1.5, 5.5);
\draw [black, thick] (2.5, 5.5) -- (3.5, 5.5);
\draw [black, thick] (4.5, 5.5) -- (5.5, 5.5);
\draw [black, thick] (6.5, 5.5) -- (7.5, 5.5);
\draw [black, thick] (8.5, 5.5) -- (9.5, 5.5);
\draw [black, thick] (0.5, 6.5) -- (1.5, 6.5);
\draw [black, thick] (2.5, 6.5) -- (3.5, 6.5);
\draw [black, thick] (3.5, 6.5) -- (4.5, 6.5);
\draw [black, thick] (5.5, 6.5) -- (6.5, 6.5);
\draw [black, thick] (6.5, 6.5) -- (7.5, 6.5);
\draw [black, thick] (8.5, 6.5) -- (9.5, 6.5);
\draw [black, thick] (1.5, 7.5) -- (2.5, 7.5);
\draw [black, thick] (3.5, 7.5) -- (4.5, 7.5);
\draw [black, thick] (5.5, 7.5) -- (6.5, 7.5);
\draw [black, thick] (7.5, 7.5) -- (8.5, 7.5);
\draw [black, thick] (0.5, 8.5) -- (1.5, 8.5);
\draw [black, thick] (2.5, 8.5) -- (3.5, 8.5);
\draw [black, thick] (4.5, 8.5) -- (5.5, 8.5);
\draw [black, thick] (6.5, 8.5) -- (7.5, 8.5);
\draw [black, thick] (8.5, 8.5) -- (9.5, 8.5);
\draw [black, thick] (0.5, 0.5) -- (0.5, 1.5);
\draw [black, thick] (2.5, 0.5) -- (2.5, 1.5);
\draw [black, thick] (3.5, 0.5) -- (3.5, 1.5);
\draw [black, thick] (5.5, 0.5) -- (5.5, 1.5);
\draw [black, thick] (6.5, 0.5) -- (6.5, 1.5);
\draw [black, thick] (9.5, 0.5) -- (9.5, 1.5);
\draw [black, thick] (1.5, 1.5) -- (1.5, 2.5);
\draw [black, thick] (4.5, 1.5) -- (4.5, 2.5);
\draw [black, thick] (7.5, 1.5) -- (7.5, 2.5);
\draw [black, thick] (8.5, 1.5) -- (8.5, 2.5);
\draw [black, thick] (0.5, 2.5) -- (0.5, 3.5);
\draw [black, thick] (2.5, 2.5) -- (2.5, 3.5);
\draw [black, thick] (3.5, 2.5) -- (3.5, 3.5);
\draw [black, thick] (5.5, 2.5) -- (5.5, 3.5);
\draw [black, thick] (6.5, 2.5) -- (6.5, 3.5);
\draw [black, thick] (9.5, 2.5) -- (9.5, 3.5);
\draw [black, thick] (1.5, 3.5) -- (1.5, 4.5);
\draw [black, thick] (2.5, 3.5) -- (2.5, 4.5);
\draw [black, thick] (4.5, 3.5) -- (4.5, 4.5);
\draw [black, thick] (5.5, 3.5) -- (5.5, 4.5);
\draw [black, thick] (7.5, 3.5) -- (7.5, 4.5);
\draw [black, thick] (8.5, 3.5) -- (8.5, 4.5);
\draw [black, thick] (0.5, 4.5) -- (0.5, 5.5);
\draw [black, thick] (3.5, 4.5) -- (3.5, 5.5);
\draw [black, thick] (6.5, 4.5) -- (6.5, 5.5);
\draw [black, thick] (9.5, 4.5) -- (9.5, 5.5);
\draw [black, thick] (1.5, 5.5) -- (1.5, 6.5);
\draw [black, thick] (2.5, 5.5) -- (2.5, 6.5);
\draw [black, thick] (4.5, 5.5) -- (4.5, 6.5);
\draw [black, thick] (5.5, 5.5) -- (5.5, 6.5);
\draw [black, thick] (7.5, 5.5) -- (7.5, 6.5);
\draw [black, thick] (8.5, 5.5) -- (8.5, 6.5);
\draw [black, thick] (0.5, 6.5) -- (0.5, 7.5);
\draw [black, thick] (9.5, 6.5) -- (9.5, 7.5);
\draw [black, thick] (0.5, 7.5) -- (0.5, 8.5);
\draw [black, thick] (1.5, 7.5) -- (1.5, 8.5);
\draw [black, thick] (2.5, 7.5) -- (2.5, 8.5);
\draw [black, thick] (3.5, 7.5) -- (3.5, 8.5);
\draw [black, thick] (4.5, 7.5) -- (4.5, 8.5);
\draw [black, thick] (5.5, 7.5) -- (5.5, 8.5);
\draw [black, thick] (6.5, 7.5) -- (6.5, 8.5);
\draw [black, thick] (7.5, 7.5) -- (7.5, 8.5);
\draw [black, thick] (8.5, 7.5) -- (8.5, 8.5);
\draw [black, thick] (9.5, 7.5) -- (9.5, 8.5);

        \end{tikzpicture}
        \caption{A Hamilton cycle on $G(10,9)$ with $10 \times 9 - 10 = 80$ turns.}
\end{figure}

The constructions in Figures \ref{6x3max} and \ref{14x5max} easily generalise to $G(m,3)$ and $G(m,5)$ for any $m\equiv 2\pmod 4,m \geq 6$.

We now use these constructions to prove a more general result.

Call a cycle in $G(m,n)$ {\it extensible} if it contains any of the following two paths (called $T$ and $U$) as subpaths in a corner, up to rotation/reflection of the grid.

    \begin{figure}[H]
        \centering
        \begin{tikzpicture}
\draw[step=1cm,gray,thin,dashed] (0.1,0) grid (4,3.9);
\draw [black, thin] (0, 0) -- (4, 0) -- (4, 4);
\draw [black, thick] (0.5, 0.5) -- (1.5, 0.5);
\draw [black, thick] (1.5, 0.5) -- (2.5, 0.5);
\draw [black, thick] (2.5, 0.5) -- (3.5, 0.5);
\node at (-0.5, 2) {$\cdots$};
\node at (2, 4.5) {$\vdots$};
        \end{tikzpicture}
        \caption{A ``$T$" shape, consisting of two consecutive straights on the edge next to the corner.}
        \label{T-extend}
    \end{figure}
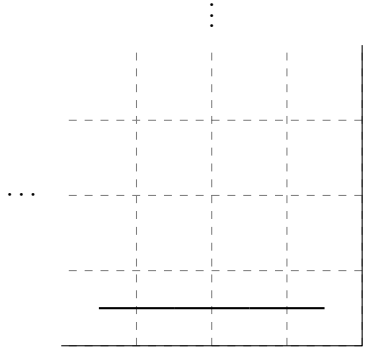
    
    \begin{figure}[H]
        \centering
        \begin{tikzpicture}
\draw[step=1cm,gray,thin,dashed] (0.1,0) grid (4,3.9);
\draw [black, thin] (0, 0) -- (4, 0) -- (4, 4);
\draw [black, thick] (0.5, 0.5) -- (1.5, 0.5);
\draw [black, thick] (2.5, 0.5) -- (3.5, 0.5);
\draw [black, thick] (1.5, 1.5) -- (2.5, 1.5);
\draw [black, thick] (0.5, 0.5) -- (0.5, 1.5);
\draw [black, thick] (1.5, 0.5) -- (1.5, 1.5);
\draw [black, thick] (2.5, 0.5) -- (2.5, 1.5);
\draw [black, thick] (3.5, 0.5) -- (3.5, 1.5);
\draw [black, thick] (0.5, 1.5) -- (0.5, 2.5);
\draw [black, thick] (3.5, 1.5) -- (3.5, 2.5);
\node at (-0.5, 2) {$\cdots$};
\node at (2, 4.5) {$\vdots$};
        \end{tikzpicture}
        \caption{A ``U" shape, pointing downward.}
        \label{U-extend}
    \end{figure}
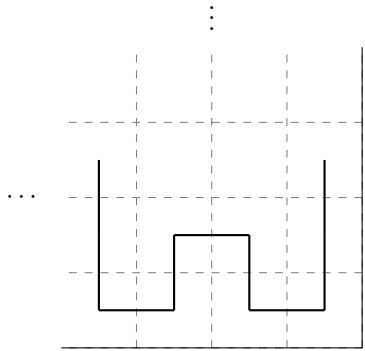

\begin{lemma}
If there exists an extensible Hamilton cycle in $G(m,n)$ with exactly $k$ straights, then there is an extensible Hamilton cycle in $G(m+4,n+4)$ with exactly $k+4$ straights.
\label{extensionlemma}
\end{lemma}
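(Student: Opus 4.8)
The plan is to perform the extension by a local modification at the extensible corner that sweeps the cycle out through the $4m+4n+16$ new cells using diagonal ``staircase'' paths, which have the property of being almost entirely turns. I would first fix coordinates so that the extensible corner is the top-right corner of $G(m,n)$, and embed $G(m,n)$ as the bottom-left $m\times n$ block of $G(m+4,n+4)$. The new cells then form an $L$-shaped region: a vertical strip of width $4$ and height $n+4$ up the right side, together with a horizontal strip of width $m+4$ and height $4$ across the top, overlapping in a $4\times4$ square at the new corner.

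The key observation is that a strip of \emph{even} width can be filled by a staircase path in which every interior cell is a turn: pairing the columns two at a time and zigzagging between the two columns of each pair uses only turns. Thus a staircase traversal of the width-$4$ strips is free of straights except at its ends and at any crossover between column-pairs, all of which can be pushed to controlled locations. I would detach the given cycle at the two straights of the $T$ (or $U$), route the resulting free ends into the $L$-region, thread them through the vertical and horizontal strips by such staircases, and reconnect into a single closed curve. The four new straights then arise in a controlled way: heuristically they are the unavoidable straights forced in the new rows and columns by the parity argument of Lemma \ref{evenrowcol}, while every straight of the original cycle is preserved.

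Concretely the steps are: (i) set up the embedding and label the cells of the $L$-region; (ii) give the explicit staircase routing of the two strips and the rewiring that splices them to the detached ends of the old cycle; (iii) check connectivity, i.e.\ that the splice yields one cycle rather than several, which reduces to a parity condition on how the staircases meet the old cycle; (iv) count straights and verify the total is exactly $k+4$; and (v) locate a fresh $T$ or $U$ at the new top-right corner so that the output is again extensible and the lemma iterates.

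The main obstacle is the exact straight count in step (iv). Since the $L$-region has $\Theta(m+n)$ cells, a space-filling traversal would naively create many straights; the content of the lemma is that the staircase is all-turns, confining the straights to finitely many forced locations, namely where a strip rounds the elbow of the $L$, where two column-pairs cross over, and where the staircase splices into the old cycle. Showing that these forced straights number \emph{exactly} four --- and that none of the original straights are accidentally destroyed or duplicated --- is the delicate bookkeeping, and it is sensitive to the parity of the strip widths relative to $m$ and $n$. Connectivity in step (iii) is the secondary subtlety, since a mis-parity splice would disconnect the curve in exactly the manner flagged after Lemma \ref{hamiltonianhalfform}.
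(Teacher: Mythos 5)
Your proposal takes essentially the same route as the paper's proof: the paper likewise enlarges the grid by a width-$4$ L-shaped frame along two sides, opens the cycle at the corner $T$ or $U$, threads a zigzag (staircase) path through the frame that turns at every cell except a few forced locations, invokes a checkerboard/parity argument to see that the ends rejoin into a single Hamilton cycle covering all new cells, counts a net gain of exactly four straights (in the $T$ case two are removed and six added), and observes that a fresh $T$ or $U$ appears at a corner of the enlarged grid so the construction iterates. The one slip in your outline is the plan to detach the cycle at ``the two straights of the $T$ (or $U$)'': a $U$ consists entirely of turns and has no straights, so in that case the local modification must reroute the $U$ itself (as the paper does), with the straight count then coming purely from the straights inserted in the frame.
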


\begin{proof}
Without loss of generality, assume that a $T$ or $U$ is in the bottom-right corner, and in the case of $U$, assume that it points downwards. Extend $G(m,n)$ to $G(m+4,n+4)$ by adding four rows and four columns to the left and bottom.

We first consider the $T$ case.

    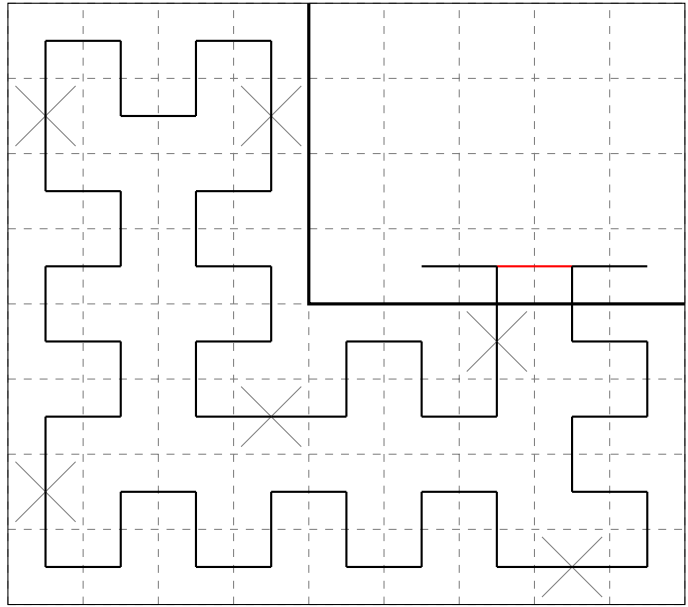
\begin{figure}[H]
        \centering
        \begin{tikzpicture}
\draw[step=1cm,gray,thin,dashed] (0,0) grid (9,8);
\draw [black, thin] (0, 0) -- (9, 0) -- (9, 8) -- (0, 8) -- cycle;
\draw [black, very thick] (4, 8) -- (4, 4) -- (9, 4);
\draw [gray, thin] (7.1, 0.1) -- (7.9, 0.9);
\draw [gray, thin] (7.9, 0.1) -- (7.1, 0.9);
\draw [gray, thin] (0.1, 1.1) -- (0.9, 1.9);
\draw [gray, thin] (0.9, 1.1) -- (0.1, 1.9);
\draw [gray, thin] (3.1, 2.1) -- (3.9, 2.9);
\draw [gray, thin] (3.9, 2.1) -- (3.1, 2.9);
\draw [gray, thin] (6.1, 3.1) -- (6.9, 3.9);
\draw [gray, thin] (6.9, 3.1) -- (6.1, 3.9);
\draw [gray, thin] (0.1, 6.1) -- (0.9, 6.9);
\draw [gray, thin] (0.9, 6.1) -- (0.1, 6.9);
\draw [gray, thin] (3.1, 6.1) -- (3.9, 6.9);
\draw [gray, thin] (3.9, 6.1) -- (3.1, 6.9);

\draw [red, thick] (6.5, 4.5) -- (7.5, 4.5);

\draw [black, thick] (0.5, 0.5) -- (1.5, 0.5);
\draw [black, thick] (2.5, 0.5) -- (3.5, 0.5);
\draw [black, thick] (4.5, 0.5) -- (5.5, 0.5);
\draw [black, thick] (6.5, 0.5) -- (7.5, 0.5);
\draw [black, thick] (7.5, 0.5) -- (8.5, 0.5);
\draw [black, thick] (1.5, 1.5) -- (2.5, 1.5);
\draw [black, thick] (3.5, 1.5) -- (4.5, 1.5);
\draw [black, thick] (5.5, 1.5) -- (6.5, 1.5);
\draw [black, thick] (7.5, 1.5) -- (8.5, 1.5);
\draw [black, thick] (0.5, 2.5) -- (1.5, 2.5);
\draw [black, thick] (2.5, 2.5) -- (3.5, 2.5);
\draw [black, thick] (3.5, 2.5) -- (4.5, 2.5);
\draw [black, thick] (5.5, 2.5) -- (6.5, 2.5);
\draw [black, thick] (7.5, 2.5) -- (8.5, 2.5);
\draw [black, thick] (0.5, 3.5) -- (1.5, 3.5);
\draw [black, thick] (2.5, 3.5) -- (3.5, 3.5);
\draw [black, thick] (4.5, 3.5) -- (5.5, 3.5);
\draw [black, thick] (7.5, 3.5) -- (8.5, 3.5);
\draw [black, thick] (0.5, 4.5) -- (1.5, 4.5);
\draw [black, thick] (2.5, 4.5) -- (3.5, 4.5);
\draw [black, thick] (5.5, 4.5) -- (6.5, 4.5);
\draw [black, thick] (7.5, 4.5) -- (8.5, 4.5);
\draw [black, thick] (0.5, 5.5) -- (1.5, 5.5);
\draw [black, thick] (2.5, 5.5) -- (3.5, 5.5);
\draw [black, thick] (1.5, 6.5) -- (2.5, 6.5);
\draw [black, thick] (0.5, 7.5) -- (1.5, 7.5);
\draw [black, thick] (2.5, 7.5) -- (3.5, 7.5);
\draw [black, thick] (0.5, 0.5) -- (0.5, 1.5);
\draw [black, thick] (1.5, 0.5) -- (1.5, 1.5);
\draw [black, thick] (2.5, 0.5) -- (2.5, 1.5);
\draw [black, thick] (3.5, 0.5) -- (3.5, 1.5);
\draw [black, thick] (4.5, 0.5) -- (4.5, 1.5);
\draw [black, thick] (5.5, 0.5) -- (5.5, 1.5);
\draw [black, thick] (6.5, 0.5) -- (6.5, 1.5);
\draw [black, thick] (8.5, 0.5) -- (8.5, 1.5);
\draw [black, thick] (0.5, 1.5) -- (0.5, 2.5);
\draw [black, thick] (7.5, 1.5) -- (7.5, 2.5);
\draw [black, thick] (1.5, 2.5) -- (1.5, 3.5);
\draw [black, thick] (2.5, 2.5) -- (2.5, 3.5);
\draw [black, thick] (4.5, 2.5) -- (4.5, 3.5);
\draw [black, thick] (5.5, 2.5) -- (5.5, 3.5);
\draw [black, thick] (6.5, 2.5) -- (6.5, 3.5);
\draw [black, thick] (8.5, 2.5) -- (8.5, 3.5);
\draw [black, thick] (0.5, 3.5) -- (0.5, 4.5);
\draw [black, thick] (3.5, 3.5) -- (3.5, 4.5);
\draw [black, thick] (6.5, 3.5) -- (6.5, 4.5);
\draw [black, thick] (7.5, 3.5) -- (7.5, 4.5);
\draw [black, thick] (1.5, 4.5) -- (1.5, 5.5);
\draw [black, thick] (2.5, 4.5) -- (2.5, 5.5);
\draw [black, thick] (0.5, 5.5) -- (0.5, 6.5);
\draw [black, thick] (3.5, 5.5) -- (3.5, 6.5);
\draw [black, thick] (0.5, 6.5) -- (0.5, 7.5);
\draw [black, thick] (1.5, 6.5) -- (1.5, 7.5);
\draw [black, thick] (2.5, 6.5) -- (2.5, 7.5);
\draw [black, thick] (3.5, 6.5) -- (3.5, 7.5);

        \end{tikzpicture}
        \caption{An extensible $G(5,4)$ grid being extended into an extensible $G(9,8)$ grid.}
        \label{T-extend-example}
    \end{figure}

We turn the two straights outwards, and replace them with a straight and a zigzag, as shown in Figure \ref{T-extend-example}. We then draw a zig-zag path right around the outside, adding straights at the corners when needed. By a checkerboard argument, it can be easily seen that the endpoints join up and the new cycle covers all new squares, and that the top-left now contains a $T$ or $U$. Furthermore, there are exactly four more straights in the new grid (since two were removed and six were added).

We now consider the $U$ case, which proceeds in a similar manner.

    \begin{figure}[H]
        \centering
        \begin{tikzpicture}
\draw[step=0.8cm,gray,thin,dashed] (0, 3.2) grid (3.2, 6.4);
\draw [black, thin] (0, 3.2) -- (0, 6.4) -- (3.2, 6.4) -- (3.2, 3.2) -- cycle;

\draw[step=0.8cm,gray,thin,dashed] (4.0, 0) grid (10.4, 6.4);
\draw [black, thin] (4.0, 0) -- (10.4, 0) -- (10.4, 6.4) -- (4.0, 6.4) -- cycle;

\node at (3.6, 4.8) {$\to$};

\draw [black, very thick] (7.2, 6.4) -- (7.2, 3.2) -- (10.4, 3.2);

\draw [black, thick] (4.4, 0.4) -- (5.2, 0.4);
\draw [black, thick] (6.0, 0.4) -- (6.80, 0.4);
\draw [black, thick] (7.60, 0.4) -- (8.4, 0.4);
\draw [black, thick] (9.20, 0.4) -- (10.0, 0.4);
\draw [black, thick] (5.2, 1.20) -- (6.0, 1.20);
\draw [black, thick] (6.80, 1.20) -- (7.60, 1.20);
\draw [black, thick] (8.4, 1.20) -- (9.20, 1.20);
\draw [black, thick] (4.4, 2.0) -- (5.2, 2.0);
\draw [black, thick] (6.0, 2.0) -- (6.80, 2.0);
\draw [black, thick] (6.80, 2.0) -- (7.60, 2.0);
\draw [black, thick] (7.60, 2.0) -- (8.4, 2.0);
\draw [black, thick] (9.20, 2.0) -- (10.0, 2.0);
\draw [black, thick] (4.4, 2.80) -- (5.2, 2.80);
\draw [black, thick] (6.0, 2.80) -- (6.80, 2.80);
\draw [black, thick] (7.60, 2.80) -- (8.4, 2.80);
\draw [black, thick] (9.20, 2.80) -- (10.0, 2.80);
\draw [black, thick] (0.4, 3.6) -- (1.20, 3.6);
\draw [black, thick] (2.0, 3.6) -- (2.80, 3.6);
\draw [black, thick] (4.4, 3.6) -- (5.2, 3.6);
\draw [black, thick] (6.0, 3.6) -- (6.80, 3.6);
\draw [black, thick] (7.60, 3.6) -- (8.4, 3.6);
\draw [black, thick] (9.20, 3.6) -- (10.0, 3.6);
\draw [black, thick] (1.20, 4.4) -- (2.0, 4.4);
\draw [black, thick] (4.4, 4.4) -- (5.2, 4.4);
\draw [black, thick] (6.0, 4.4) -- (6.80, 4.4);
\draw [black, thick] (7.60, 4.4) -- (8.4, 4.4);
\draw [black, thick] (9.20, 4.4) -- (10.0, 4.4);
\draw [black, thick] (5.2, 5.2) -- (6.0, 5.2);
\draw [black, thick] (4.4, 6.0) -- (5.2, 6.0);
\draw [black, thick] (6.0, 6.0) -- (6.80, 6.0);
\draw [black, thick] (4.4, 0.4) -- (4.4, 1.20);
\draw [black, thick] (5.2, 0.4) -- (5.2, 1.20);
\draw [black, thick] (6.0, 0.4) -- (6.0, 1.20);
\draw [black, thick] (6.80, 0.4) -- (6.80, 1.20);
\draw [black, thick] (7.60, 0.4) -- (7.60, 1.20);
\draw [black, thick] (8.4, 0.4) -- (8.4, 1.20);
\draw [black, thick] (9.20, 0.4) -- (9.20, 1.20);
\draw [black, thick] (10.0, 0.4) -- (10.0, 1.20);
\draw [black, thick] (4.4, 1.20) -- (4.4, 2.0);
\draw [black, thick] (10.0, 1.20) -- (10.0, 2.0);
\draw [black, thick] (5.2, 2.0) -- (5.2, 2.80);
\draw [black, thick] (6.0, 2.0) -- (6.0, 2.80);
\draw [black, thick] (8.4, 2.0) -- (8.4, 2.80);
\draw [black, thick] (9.20, 2.0) -- (9.20, 2.80);
\draw [black, thick] (4.4, 2.80) -- (4.4, 3.6);
\draw [black, thick] (6.80, 2.80) -- (6.80, 3.6);
\draw [black, thick] (7.60, 2.80) -- (7.60, 3.6);
\draw [black, thick] (10.0, 2.80) -- (10.0, 3.6);
\draw [black, thick] (0.4, 3.6) -- (0.4, 4.4);
\draw [black, thick] (1.20, 3.6) -- (1.20, 4.4);
\draw [black, thick] (2.0, 3.6) -- (2.0, 4.4);
\draw [black, thick] (2.80, 3.6) -- (2.80, 4.4);
\draw [black, thick] (5.2, 3.6) -- (5.2, 4.4);
\draw [black, thick] (6.0, 3.6) -- (6.0, 4.4);
\draw [black, thick] (8.4, 3.6) -- (8.4, 4.4);
\draw [black, thick] (9.20, 3.6) -- (9.20, 4.4);
\draw [black, thick] (0.4, 4.4) -- (0.4, 5.2);
\draw [black, thick] (2.80, 4.4) -- (2.80, 5.2);
\draw [black, thick] (4.4, 4.4) -- (4.4, 5.2);
\draw [black, thick] (6.80, 4.4) -- (6.80, 5.2);
\draw [black, thick] (7.60, 4.4) -- (7.60, 5.2);
\draw [black, thick] (10.0, 4.4) -- (10.0, 5.2);
\draw [black, thick] (4.4, 5.2) -- (4.4, 6.0);
\draw [black, thick] (5.2, 5.2) -- (5.2, 6.0);
\draw [black, thick] (6.0, 5.2) -- (6.0, 6.0);
\draw [black, thick] (6.80, 5.2) -- (6.80, 6.0);

        \end{tikzpicture}
        \caption{An extensible $G(4,4)$ grid being extended into an extensible $G(8,8)$ grid.}
        \label{U-extend-example}
    \end{figure}

We change the shape of the U, as shown in figure \ref{U-extend-example}.  After exiting the grid, we draw a zig-zag path around the outside, adding straights near corners as needed. By a checkerboard argument, that the endpoints join up and the new cycle covers all new squares, and that the top-left now contains a $T$ or $U$.

So in either case, the lemma holds.
\end{proof}

Claim \ref{max4|n} follows from the provided constructions (which are all extensible, except for $6\times 5$) and Lemma \ref{extensionlemma}.

Claim \ref{max4|n} resolves all cases except for that in which the shorter edge is not $2 \pmod 4$. We now turn our attention to this case.

\subsection{\texorpdfstring{$n\equiv 2 \pmod 4$}{}}\label{2mod4}

Consider $G(m,n)$, where $m > n$ and $n \equiv 2 \pmod 4$.

\begin{lemma}\label{3col}
In a Hamilton cycle on such a grid, every column either has a turn, or is adjacent to a column with a straight. Equivalently, for any three consecutive columns, at least one contains a straight.
\end{lemma}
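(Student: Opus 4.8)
The plan is to prove the equivalent formulation --- that no three consecutive columns can consist entirely of turns, i.e.\ among any three consecutive columns at least one contains a straight --- and to argue by contradiction. So I would suppose that columns $a-1$, $a$, $a+1$ each contain no straight, meaning every cell in each of them is a turn. The first step is to record the forced local structure of an \emph{all-turn column}. Each of its cells, being a turn, is incident to exactly one vertical and one horizontal unit edge. The vertical edges available inside a single column are precisely the edges of the path on its $n$ cells, and choosing exactly one such edge at every cell amounts to a perfect matching of that path. Since $n$ is even this matching exists and is unique, pairing rows $(1,2),(3,4),\dots,(n-1,n)$. Hence an all-turn column splits into $n/2$ vertical ``dominoes'' on these row-pairs, and from each domino the two cells each send a single horizontal leg, either left or right.

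Next I would classify the dominoes of the middle column $a$, calling a domino a \emph{cap} if both legs point to the same side and a \emph{through} if one points left and one right. The crucial step is to rule out caps. Suppose a domino of column $a$ on rows $(2j-1,2j)$ were a left-cap, so that both edges $(a,2j-1)$--$(a-1,2j-1)$ and $(a,2j)$--$(a-1,2j)$ lie in the cycle. Because column $a-1$ is also all-turns, its cells $(a-1,2j-1)$ and $(a-1,2j)$ are joined by a vertical domino edge, and each of them, being a turn, has its unique horizontal edge forced to be the one just drawn. A degree check then shows the four cells $(a,2j-1),(a,2j),(a-1,2j),(a-1,2j-1)$ use both of their cycle-edges among themselves, forming an isolated $4$-cycle --- contradicting that we have a single Hamilton cycle on $mn>4$ cells. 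The symmetric argument with column $a+1$ rules out right-caps, so every one of the $n/2$ dominoes of column $a$ must be a through.

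Finally I would finish by a parity count across the cut separating columns $1,\dots,a-1$ from columns $a,\dots,m$. The cycle edges crossing this cut are exactly the horizontal edges between columns $a-1$ and $a$, one for each left-pointing cell of column $a$; since every domino of column $a$ is a through, each contributes exactly one such cell, so the number of crossing edges is $n/2$. As $n\equiv 2\pmod 4$, this number $n/2$ is \emph{odd}, yet any cycle (indeed any even subgraph) must cross any vertex-cut an even number of times, the desired contradiction. The main obstacle is establishing the forced domino structure and the $4$-cycle that eliminates caps; once caps are gone everything hinges on $n/2$ being odd, which is precisely where $n\equiv 2\pmod 4$ enters (for $4\mid n$ the count $n/2$ is even and the argument correctly fails, matching the different answer in that regime). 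The boundary columns and the first phrasing reduce to the same $4$-cycle observation, since a leftmost all-turn column is forced to send every leg rightward and so cannot abut another all-turn column.
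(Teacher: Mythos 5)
Your proof is correct and relies on essentially the same ingredients as the paper's own argument: the forced vertical-domino structure of an all-turn column, the isolated $2\times 2$ cycle giving a connectivity contradiction, and the even-crossing parity of the cut between two columns combined with the oddness of $n/2$. The only difference is the order of deduction --- the paper applies the parity argument first, to locate a row-pair with no edges between the first two columns (which then forces a $2\times 2$ loop with the third column), whereas you first eliminate ``caps'' via the $2\times 2$ loop and only then derive the parity contradiction --- a harmless rearrangement of the same proof.
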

\begin{proof}
Suppose that three adjacent columns $C_1,C_2,C_3$, in that order, contain no straights. Then these columns contain only turns, and hence every square in these columns is part of exactly one vertical segment. It is clear that these vertical segments must join the cell on row $2a-1$ to the cell on row $2a$ for each positive integer $a$ with $2a\le n$; furthermore, they cannot join any cell on row $2a$ to one on row $2a+1$.

Consider columns $C_1$ and $C_2$, and consider rows $2a-1$ and $2a$. The number of horizontal segments between $C_1$ and $C_2$ in these rows is either $0$ or $1$; two segments would create a disconnected cycle. Since the loop must cross between $C_1$ and $C_2$ an even number of times, and there an an odd number of such pairs of rows, one such pair of rows must contain $0$ horizontal segments between $C_1$ and $C_2$. These two cells in $C_2$ must therefore both continue horizontally into $C_3$, forming a $2 \times 2$ loop, a contradiction.

\end{proof}

\begin{lemma}\label{leftandrightbounds}
In a Hamilton cycle on such a grid, suppose that columns $2b+1$ and $2b+2$ are both all turns, and suppose further that this is the minimal $b$ with this property. Then columns $1$ to $2b$ contain at least $\max(2b,\frac n2+1)$ straights.
\end{lemma}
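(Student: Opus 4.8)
The plan is to establish the two bounds separately and then combine them, using throughout two structural facts about an all-turns column. First, in a column with no straights every cell is a turn, so each cell meets exactly one vertical edge; these vertical edges form the unique perfect matching of the column's $n$ cells along the column, pairing rows $2a-1$ and $2a$ for $1\le a\le n/2$. Second, no row-pair of column $2b+1$ can send both of its cells rightwards (call this configuration $RR$): since column $2b+2$ is also all turns, an $RR$ pair would close up with the vertical segment of column $2b+2$ into a $2\times 2$ loop, contradicting that the cycle is connected --- this is exactly the mechanism behind Lemma \ref{3col}. The same observation applied at the left boundary (where column $1$, if all turns, is forced into $RR$ against column $2$) shows the minimal such $b$ satisfies $b\ge 1$. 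Since column $2b+1$ contains no straights, it is convenient to count straights in $R:=\{\text{columns }1,\dots,2b+1\}$, which equals the number of straights in columns $1$ to $2b$.

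For the bound of $2b$, I would use the minimality of $b$. For each $b'<b$ the pair of columns $2b'+1,2b'+2$ is not both all turns, so at least one of them contains a straight; as these $b$ pairs are disjoint and cover columns $1$ to $2b$, at least $b$ of these columns contain a straight. By Lemma \ref{evenrowcol} every column has an even number of turns, hence (as $n$ is even) an even number of straights, so each of these columns in fact has at least two straights, giving at least $2b$ straights in total.

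The heart of the argument is the bound $n/2$, which I would prove by showing that each of the $n/2$ row-pairs $\{2a-1,2a\}$ contains a straight somewhere in $R$. Suppose some row-pair had none: then every cell of rows $2a-1$ and $2a$ in columns $1,\dots,2b+1$ is a turn, so within each of these two rows every cell meets exactly one horizontal edge. Reading along row $2a-1$ from the left, the leftmost cell must point right (it has no left neighbour), which forces its horizontal partner, and a domino argument then pairs the cells as $(1,2),(3,4),\dots$; because the row has odd length $2b+1$, the rightmost cell $(2b+1,2a-1)$ is left over and must point right into column $2b+2$. The identical argument on row $2a$ forces $(2b+1,2a)$ to point right as well, producing an $RR$ pair in column $2b+1$ --- the forbidden configuration. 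Hence every row-pair contributes at least one straight to $R$, so $R$, and therefore columns $1$ to $2b$, contains at least $n/2$ straights.

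Finally I would upgrade $n/2$ to $n/2+1$ by parity: the total number of straights in columns $1$ to $2b$ is a sum of even per-column counts, hence even, while $n/2$ is odd because $n\equiv 2\pmod 4$; an even quantity that is at least the odd number $n/2$ is at least $n/2+1$. Together with the bound of $2b$ this gives $\max(2b,\frac n2+1)$. I expect the main obstacle to be the crux bound: making the odd-width forcing rigorous --- in particular verifying that ``no straights in a row-pair'' really does force every cell of those two rows to be a turn (so the per-cell single-horizontal-edge matching is legitimate), and that the resulting $RR$ configuration is genuinely excluded by the all-turns hypothesis on \emph{both} columns $2b+1$ and $2b+2$. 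The parity step and the bound of $2b$ are comparatively routine.
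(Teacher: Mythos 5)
Your proof is correct. For the bound of $2b$ (disjoint column pairs, minimality, and the evenness of per-column straight counts) and for the core bound of $n/2$ (at least one straight per row-pair, proved by the horizontal domino argument along the odd-length segment of the row, ending in the forbidden $RR$ configuration that closes a $2\times 2$ loop against the vertical matchings of the two all-turn columns) you follow essentially the same route as the paper. Where you genuinely diverge is the upgrade from $n/2$ to $n/2+1$. The paper obtains the extra straight \emph{locally}: applying the crossing-parity argument from the proof of Lemma \ref{3col} to columns $2b+1$ and $2b+2$, it finds a row-pair in which both cells of column $2b+1$ attach leftward to column $2b$, and then shows, by an odd-turn parity count in the spirit of Lemma \ref{evenrowcol}, that each of those two rows must already contain a straight in columns $1$ to $2b$; that row-pair therefore carries two straights, giving $n/2+1$ in total. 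You instead argue \emph{globally}: by Lemma \ref{evenrowcol} every column has an even number of turns, hence (as $n$ is even and the cycle is Hamiltonian) an even number of straights, so the total over columns $1$ to $2b$ is even; since $n/2$ is odd precisely because $n\equiv 2\pmod 4$, an even total that is at least $n/2$ is at least $n/2+1$. Your parity step is shorter, avoids re-invoking the crossing-parity machinery a second time, and cleanly isolates where the hypothesis $n\equiv 2\pmod 4$ enters; the paper's version buys slightly more structural information (it locates a specific row-pair containing two straights), but for the stated lemma your argument is a genuine simplification. Your side remark that $b\ge 1$ also correctly disposes of the degenerate case $b=0$, which the paper handles only implicitly.
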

\begin{proof}
Suppose that rows $2a-1$ and $2a$ both do not contain any straights in the first $2b$ columns. Notice that the turns in the first $2b+2$ columns of rows $2a-1$ and $2a$ must all be turns; columns $2b+1$ and $2b+2$ follow directly from the hypothesis. Then, similar to Lemma \ref{3col}, $(2k-1, 2a-1)$ and $(2k-1, 2a)$ are connected to $(2k, 2a-1)$ and $(2k, 2a)$ respectively for $2k\le 2b+2$. In particular, this means that the two cells in column $2b+1$ and the given rows must go horizontally to row $2b+2$, which forms a $2\times 2$ loop, a contradiction.

Thus each pair of rows must contain at least one straight, giving at least $\frac b2$ straights. Observe that if $(2b, a)$ is connected to $(2b+1, a)$ by a horizontal segment, then the number of turns in row $a$ with $x$-coordinate $\leq 2b$ is odd (this is true in a similar manner to Lemma \ref{evenrowcol}). Hence, there must be a straight in this row, to the left of column $2b+1$. From the proof of Lemma \ref{3col}, there is a pair of row where two such horizontal connections exist, and so at least one pair of rows have two straights. Thus there are at least $\frac b2+1$ straights.

By minimality of $b$, each pair of columns $2c-1,2c$ for $c<b$ must contain at least $1$ straight, and hence by Lemma \ref{evenrowcol} at least $2$ straights. Thus another lower bound for the number of straights is $2b$.
\end{proof}

Using Lemmas \ref{3col} and \ref{leftandrightbounds} we deduce a lower bound on the number of straights, and hence an upper bound on the number of turns.

If any such pair of columns as specified in Lemma \ref{leftandrightbounds} exists, find the relevant pair of columns (with even distance from the left side); in other words, a pair of columns $2a+1,2a+2$ for some $a$. We say that this pair has distance $2a$.

If no such pair exists, then every pair of columns starting from the left hand side has at least $2$ straights, and a lower bound of $2\lfloor\frac m2\rfloor$ straights is achieved.

Similarly, find the first pair of columns with no turns at an even distance from the right side (if it does not exist, we again get a $2\lfloor\frac m2\rfloor$ bound), and say it has distance $2b$. If $2a+2b\ge m$, then we have two straights in each pair of the last $2b$ columns, and two straights in each pair from the left excluding the last $2b$ columns, Again, this achieves the lower bound of $2\lfloor\frac m2\rfloor$ straights.

Finally, we have the case where $2a+2b<m$. Using Lemma \ref{leftandrightbounds} on the left and right sides, we have at least $\max(2a,\frac n2+1)+\max(2b,\frac n2+1)$ straights on these two sides. In the center, via Lemma \ref{3col}, we have at least $2\left\lfloor\frac{m-2a-2b}{3}\right\rfloor$ straights. Hence the two bounds combined give a lower bound for the number of straights of 
$$f(a,b) = \max\left(2a,\frac n2+1\right)+\max\left(2b,\frac n2+1\right)+2\left\lfloor\frac{m-2a-2b}{3}\right\rfloor.$$

Note that $f(a,b)$ is non-increasing in $a$ for $a\le\frac n2+1.$ For $a\ge\frac n2+1$, the first term increases by $f(a+1)-f(a)=2+2\lfloor\frac{m-2(a+1)-2b}{3}\rfloor-2\lfloor\frac{m-2a-2b}{3}\rfloor$. But $\lfloor x \rfloor - \lfloor y\rfloor \le 1$ if $|x-y|<1$, and thus the function is non-decreasing. Hence, over all $a$, a global minimum occurs at $a=\frac n2 +1$. A similar analysis shows that $b=\frac n2+1$ is also minimal, and hence the minimum of this function over all integer $a,b$ is
$$n+2+2\left\lfloor\frac{m-n-2}{3}\right\rfloor.$$

Hence, the minimum number of straights over any $m \times n$ grid of with $m>n$, $n \equiv 2 \pmod 4$ is at least $$\min\left(2\left\lfloor \frac m2\right\rfloor,n+2+2\left\lfloor\frac{m-n-2}{3}\right\rfloor\right).$$

To check which of these hold, we solve $2\lfloor \frac m2\rfloor<n+2+2\lfloor\frac{m-n-2}{3}\rfloor$, we write $m=n+6k+r$ where $0\le r<6$. The inequality becomes $k<1+\lfloor\frac{r-2}{3}\rfloor-\lfloor \frac r2\rfloor$. Checking all cases for $r$, we see that the inequality never holds for positive integers $k,r$.

Thus the minimum can be simplified to $$g(m,n)=n+2+2\left\lfloor\frac{m-n-2}{3}\right\rfloor = 2\left\lfloor \frac{2m+n+2}{6}\right\rfloor,$$
where $g$ denotes the lower bound for the number of straights (and $h$ will later analogously denote the upper bound on the number of straights).

Before we provide constructions, we provide a new way to extend constructions to larger ones.

We call a set of positive integers $S=\{a_1,a_2,\ldots,a_k\}$ {\it evenly extensible} with respect to $n$ if
\begin{itemize}
    \item $|S|=n-1$,
    \item $S$ contains at most one element from each set of the form $\{2a-1,2a\}$ for positive integral $a$,
    \item There is a unique integer $1\le a\le n$ such that $S$ does not contain $2a-1$ or $2a$, and for this $a$, S contains at least one of $2a-2$ and $2a+1$.
\end{itemize}

Given a Hamilton cycle in $G(m,n)$, where $m>n$ and $n\equiv 2\pmod 4$, a line dividing two columns is called an {\it unfolding line} if the set $R=\{r_1,r_2,\ldots,r_k\}$ of row numbers where the Hamilton cycle crosses this line is evenly extensible with respect to $n$.

\begin{lemma}\label{6extensible}
If a Hamilton cycle with $k$ straights in $G(m,n)$ has an unfolding line, then there exists a Hamilton cycle in an $G(m+6,n)$ with $k+4$ straights and an unfolding line. Furthermore, if the grid has a $U$ or $T$ which is not cut by some unfolding line, then the $G(m+6,n)$ grid also has a $U$ or $T$ not cut by some unfolding line.
\end{lemma}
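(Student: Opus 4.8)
The plan is to produce the enlarged cycle constructively, by \emph{cutting} the given Hamilton cycle along its unfolding line $L$ (say between columns $j$ and $j+1$) and \emph{splicing in} a six-column-wide gadget that reconnects the severed strands while covering all $6n$ new cells. Cutting along $L$ breaks the cycle into a family of disjoint arcs whose ends sit at the cells of columns $j$ and $j+1$ in exactly the rows recorded by the evenly extensible set $R$. I would insert six fresh columns immediately to the right of column $j$, shift everything to the right of $L$ over by six (this leaves all straights of the right-hand part, and their turn/straight status, untouched), and then draw inside the new $6\times n$ block a collection of arcs that (i) meet the old left part precisely at the rows of $R$, (ii) meet the shifted right part precisely at the rows of $R$ in the matching it expects, (iii) pass through every cell of the block, and (iv) contain exactly four straights. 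Reconnecting with such a ``tube'' returns a single closed curve, so the result is a Hamilton cycle of $G(m+6,n)$, and since only the block and the two boundary columns are altered the straight count becomes $k+4$.

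For the gadget itself I would route the strands as a dense zig-zag (a ``comb'') chosen so that almost every new cell is a turn. Here the three conditions defining even extensibility do the work: each row-pair $\{2a-1,2a\}$ is met by $R$ at most once, so the teeth of the comb in consecutive occupied pairs can be linked by vertical turns rather than by horizontal runs; the unique pair not met by $R$ is precisely where the comb must reverse direction, and the hypothesis that $R$ contains one of $2a-2,\,2a+1$ is exactly what allows this turnaround to be completed locally. The four unavoidable straights then appear at this defect pair together with the two extreme boundary rows. Coverage of all new cells and the fact that the strands close up into one cycle (rather than splitting off a small component) I would verify by the two-colouring (``checkerboard'') argument already used in Lemma~\ref{extensionlemma}, adapted to the interior strip instead of the outer frame.

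Next I must re-exhibit an unfolding line in the enlarged cycle. The natural candidate is the vertical line just inside the right wall of the inserted block; its crossing set is a controlled modification of the pattern produced by the comb, and I would check directly from the gadget that it again has the evenly extensible form (the correct number of occupied rows, at most one per row-pair, and its unique empty pair adjacent to an occupied row). For the final clause, a $U$ or $T$ (in the sense of Figures~\ref{T-extend} and~\ref{U-extend}) that is not cut by any unfolding line lies entirely on one side of $L$; since the splice modifies only $L$ and the six inserted columns, that $U$ or $T$ is carried along unchanged and remains uncut in $G(m+6,n)$.

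The main obstacle is producing \emph{one} comb gadget that works uniformly for \emph{every} evenly extensible crossing pattern, and then \emph{proving}—not merely illustrating with a figure—the three exact statements: that precisely four straights are added and no more, that the block is fully covered with the strands reconnecting into a single cycle, and that the new crossing set is again evenly extensible. I expect the straight-count and the preservation of even extensibility to be the delicate parts, both reducing to a careful parity/checkerboard analysis localized at the defect pair and at the two boundary rows, generalising the corner bookkeeping of Lemma~\ref{extensionlemma}; ensuring that the cells of the old boundary columns keep their turn/straight type (so the accounting stays exactly $+4$) is the subtle point to watch.
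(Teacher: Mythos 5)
Your high-level route coincides with the paper's: cut the cycle along the unfolding line, insert six columns, splice in strands realizing the identity matching between the left and right crossing points (so that a single Hamilton cycle is automatic), cover the block with zigzags carrying exactly four straights, take a new unfolding line through the inserted region, and note that a $U$ or $T$ lying on one side of the chosen line survives untouched. The ``Furthermore'' clause and the $+4$ accounting are treated exactly as in the paper.

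However, there is a genuine gap: the gadget, which is the entire mathematical content of the lemma, is never actually constructed --- you flag this yourself as ``the main obstacle'' --- and the qualitative description you give of it is not one that a correct gadget can satisfy. With the identity matching demanded by your condition (ii), strands of distinct occupied pairs can never be ``linked by vertical turns'': each pair $\{2a-1,2a\}$ containing a crossing must be covered by its own strand entering and exiting the block at that crossing row, and a $2\times 6$ zigzag does this with \emph{no} straights (the evenness of six makes it exit at the row it entered). The only inter-pair interaction is absorbing the defect pair into an adjacent occupied pair: the paper merges them into a $4\times 6$ region traversed by one strand entering and exiting at the occupied pair's crossing row, and this single special pattern (Figure \ref{6expansiondiagram}) carries all four straights, two in each of two columns. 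Your claimed placement --- the defect pair together with the grid's two extreme boundary rows --- cannot be right in general: when the defect pair is interior, rows $1$ and $n$ belong to occupied pairs whose zigzags need no straights, and the defect pair cannot be adjacent to both boundary rows; moreover, by Lemma \ref{evenrowcol} every inserted column contains an even number of straights, so the four straights must sit two-and-two in at most two columns, a constraint your sketch never confronts. Finally, your candidate unfolding line ``just inside the right wall'' would require analyzing the comb's interior crossings, whereas the paper takes the two boundary lines of the inserted block, whose crossing sets are literally $R$ again, so even extensibility is inherited with no computation.
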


\begin{proof}
By definition, we can split all the rows into contiguous pairs, where all pairs but one contain exactly one crossing over the unfolding line. We can merge the pair $\{2a-1, 2a\}$ with no crossings with the pair that contains a crossing at $2a-2$ or $2a+1$ (at least one of which exists), to get $4$ contiguous rows, with exactly one crossing in one of its middle two rows. Then we deal with the rows as follows, zigzagging all the pairs, and using a special pattern for the four contiguous rows:

    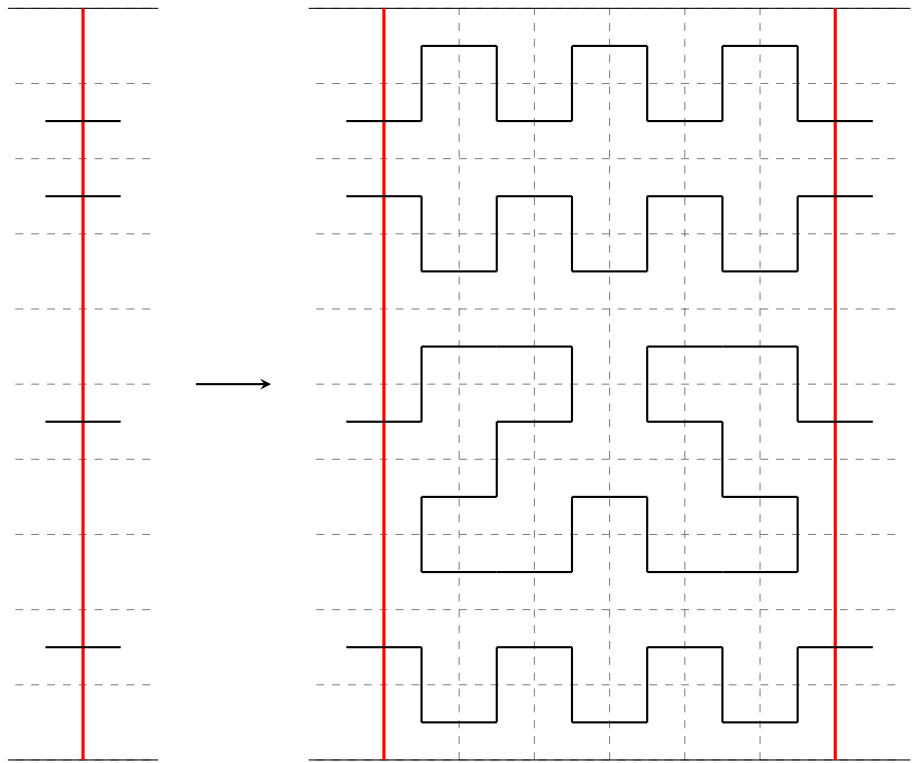
\begin{figure}[H]
        \centering
        \begin{tikzpicture}
\draw[step=1cm,gray,thin,dashed] (0.1,0) grid (1.9,10);
\draw[step=1cm,gray,thin,dashed] (4.1,0) grid (11.9,10);

\draw [black, thin] (0,0) -- (2,0);
\draw [black, thin] (0,10) -- (2,10);
\draw [black, thin] (4,0) -- (12,0);
\draw [black, thin] (4,10) -- (12,10);

\draw [red, very thick] (1,0) -- (1, 10);
\draw [red, very thick] (5,0) -- (5, 10);
\draw [red, very thick] (11,0) -- (11, 10);

\draw [->, black, thick, >=stealth] (2.5, 5) -- (3.5, 5);

\draw [black, thick] (5.5, 0.5) -- (6.5, 0.5);
\draw [black, thick] (7.5, 0.5) -- (8.5, 0.5);
\draw [black, thick] (9.5, 0.5) -- (10.5, 0.5);
\draw [black, thick] (0.5, 1.5) -- (1.5, 1.5);
\draw [black, thick] (4.5, 1.5) -- (5.5, 1.5);
\draw [black, thick] (6.5, 1.5) -- (7.5, 1.5);
\draw [black, thick] (8.5, 1.5) -- (9.5, 1.5);
\draw [black, thick] (10.5, 1.5) -- (11.5, 1.5);
\draw [black, thick] (5.5, 2.5) -- (6.5, 2.5);
\draw [black, thick] (6.5, 2.5) -- (7.5, 2.5);
\draw [black, thick] (8.5, 2.5) -- (9.5, 2.5);
\draw [black, thick] (9.5, 2.5) -- (10.5, 2.5);
\draw [black, thick] (5.5, 3.5) -- (6.5, 3.5);
\draw [black, thick] (7.5, 3.5) -- (8.5, 3.5);
\draw [black, thick] (9.5, 3.5) -- (10.5, 3.5);
\draw [black, thick] (0.5, 4.5) -- (1.5, 4.5);
\draw [black, thick] (4.5, 4.5) -- (5.5, 4.5);
\draw [black, thick] (6.5, 4.5) -- (7.5, 4.5);
\draw [black, thick] (8.5, 4.5) -- (9.5, 4.5);
\draw [black, thick] (10.5, 4.5) -- (11.5, 4.5);
\draw [black, thick] (5.5, 5.5) -- (6.5, 5.5);
\draw [black, thick] (6.5, 5.5) -- (7.5, 5.5);
\draw [black, thick] (8.5, 5.5) -- (9.5, 5.5);
\draw [black, thick] (9.5, 5.5) -- (10.5, 5.5);
\draw [black, thick] (5.5, 6.5) -- (6.5, 6.5);
\draw [black, thick] (7.5, 6.5) -- (8.5, 6.5);
\draw [black, thick] (9.5, 6.5) -- (10.5, 6.5);
\draw [black, thick] (0.5, 7.5) -- (1.5, 7.5);
\draw [black, thick] (4.5, 7.5) -- (5.5, 7.5);
\draw [black, thick] (6.5, 7.5) -- (7.5, 7.5);
\draw [black, thick] (8.5, 7.5) -- (9.5, 7.5);
\draw [black, thick] (10.5, 7.5) -- (11.5, 7.5);
\draw [black, thick] (0.5, 8.5) -- (1.5, 8.5);
\draw [black, thick] (4.5, 8.5) -- (5.5, 8.5);
\draw [black, thick] (6.5, 8.5) -- (7.5, 8.5);
\draw [black, thick] (8.5, 8.5) -- (9.5, 8.5);
\draw [black, thick] (10.5, 8.5) -- (11.5, 8.5);
\draw [black, thick] (5.5, 9.5) -- (6.5, 9.5);
\draw [black, thick] (7.5, 9.5) -- (8.5, 9.5);
\draw [black, thick] (9.5, 9.5) -- (10.5, 9.5);
\draw [black, thick] (5.5, 0.5) -- (5.5, 1.5);
\draw [black, thick] (6.5, 0.5) -- (6.5, 1.5);
\draw [black, thick] (7.5, 0.5) -- (7.5, 1.5);
\draw [black, thick] (8.5, 0.5) -- (8.5, 1.5);
\draw [black, thick] (9.5, 0.5) -- (9.5, 1.5);
\draw [black, thick] (10.5, 0.5) -- (10.5, 1.5);
\draw [black, thick] (5.5, 2.5) -- (5.5, 3.5);
\draw [black, thick] (7.5, 2.5) -- (7.5, 3.5);
\draw [black, thick] (8.5, 2.5) -- (8.5, 3.5);
\draw [black, thick] (10.5, 2.5) -- (10.5, 3.5);
\draw [black, thick] (6.5, 3.5) -- (6.5, 4.5);
\draw [black, thick] (9.5, 3.5) -- (9.5, 4.5);
\draw [black, thick] (5.5, 4.5) -- (5.5, 5.5);
\draw [black, thick] (7.5, 4.5) -- (7.5, 5.5);
\draw [black, thick] (8.5, 4.5) -- (8.5, 5.5);
\draw [black, thick] (10.5, 4.5) -- (10.5, 5.5);
\draw [black, thick] (5.5, 6.5) -- (5.5, 7.5);
\draw [black, thick] (6.5, 6.5) -- (6.5, 7.5);
\draw [black, thick] (7.5, 6.5) -- (7.5, 7.5);
\draw [black, thick] (8.5, 6.5) -- (8.5, 7.5);
\draw [black, thick] (9.5, 6.5) -- (9.5, 7.5);
\draw [black, thick] (10.5, 6.5) -- (10.5, 7.5);
\draw [black, thick] (5.5, 8.5) -- (5.5, 9.5);
\draw [black, thick] (6.5, 8.5) -- (6.5, 9.5);
\draw [black, thick] (7.5, 8.5) -- (7.5, 9.5);
\draw [black, thick] (8.5, 8.5) -- (8.5, 9.5);
\draw [black, thick] (9.5, 8.5) -- (9.5, 9.5);
\draw [black, thick] (10.5, 8.5) -- (10.5, 9.5);

        \end{tikzpicture}
        \caption{Unfolding along a line, where $n=10$. The four special rows here are rows $3,4,5,6$ (where the bottom row is row $1$).}
        \label{6expansiondiagram}
    \end{figure}

It is easy to see that any of the two new red lines can be used as a new unfolding line, and that all the conditions stated in the lemma hold.
\end{proof}

We now give constructions to attain a lower bound on the maximum number of terms, i.e. an upper bound on the minimum number of straights. To do this, we construct Hamilton cycles of $G(m,6)$ that are both unfoldable and extensible (i.e. the grid is extensible, and there exists an unfolding line that does not intersect the relevant $T$ or $U$). Note that increasing both $m$ and $n$ by $4$ increases the lower bound by $4$, as does increasing $m$ by $6$. Thus, the upper bound on the minimum number of straights obtained from these constructions minus the lower bound for the minimum number of straights is unchanged after unfolding or extending.


    \begin{figure}[H]
        \centering
        \begin{tikzpicture}
\draw [red, very thick] (4, 0) -- (4,6);

\draw[step=1cm,gray,thin,dashed] (0,0) grid (6,6);
\draw [black, thin] (0, 0) -- (6, 0) -- (6, 6) -- (0, 6) -- cycle;
\draw [gray, thin] (1.1, 0.1) -- (1.9, 0.9);
\draw [gray, thin] (1.9, 0.1) -- (1.1, 0.9);
\draw [gray, thin] (2.1, 0.1) -- (2.9, 0.9);
\draw [gray, thin] (2.9, 0.1) -- (2.1, 0.9);
\draw [gray, thin] (0.1, 1.1) -- (0.9, 1.9);
\draw [gray, thin] (0.9, 1.1) -- (0.1, 1.9);
\draw [gray, thin] (5.1, 1.1) -- (5.9, 1.9);
\draw [gray, thin] (5.9, 1.1) -- (5.1, 1.9);
\draw [gray, thin] (1.1, 3.1) -- (1.9, 3.9);
\draw [gray, thin] (1.9, 3.1) -- (1.1, 3.9);
\draw [gray, thin] (2.1, 3.1) -- (2.9, 3.9);
\draw [gray, thin] (2.9, 3.1) -- (2.1, 3.9);
\draw [gray, thin] (0.1, 4.1) -- (0.9, 4.9);
\draw [gray, thin] (0.9, 4.1) -- (0.1, 4.9);
\draw [gray, thin] (5.1, 4.1) -- (5.9, 4.9);
\draw [gray, thin] (5.9, 4.1) -- (5.1, 4.9);
\draw [black, thick] (0.5, 0.5) -- (1.5, 0.5);
\draw [black, thick] (1.5, 0.5) -- (2.5, 0.5);
\draw [black, thick] (2.5, 0.5) -- (3.5, 0.5);
\draw [black, thick] (4.5, 0.5) -- (5.5, 0.5);
\draw [black, thick] (1.5, 1.5) -- (2.5, 1.5);
\draw [black, thick] (3.5, 1.5) -- (4.5, 1.5);
\draw [black, thick] (0.5, 2.5) -- (1.5, 2.5);
\draw [black, thick] (2.5, 2.5) -- (3.5, 2.5);
\draw [black, thick] (4.5, 2.5) -- (5.5, 2.5);
\draw [black, thick] (0.5, 3.5) -- (1.5, 3.5);
\draw [black, thick] (1.5, 3.5) -- (2.5, 3.5);
\draw [black, thick] (2.5, 3.5) -- (3.5, 3.5);
\draw [black, thick] (4.5, 3.5) -- (5.5, 3.5);
\draw [black, thick] (1.5, 4.5) -- (2.5, 4.5);
\draw [black, thick] (3.5, 4.5) -- (4.5, 4.5);
\draw [black, thick] (0.5, 5.5) -- (1.5, 5.5);
\draw [black, thick] (2.5, 5.5) -- (3.5, 5.5);
\draw [black, thick] (4.5, 5.5) -- (5.5, 5.5);
\draw [black, thick] (0.5, 0.5) -- (0.5, 1.5);
\draw [black, thick] (3.5, 0.5) -- (3.5, 1.5);
\draw [black, thick] (4.5, 0.5) -- (4.5, 1.5);
\draw [black, thick] (5.5, 0.5) -- (5.5, 1.5);
\draw [black, thick] (0.5, 1.5) -- (0.5, 2.5);
\draw [black, thick] (1.5, 1.5) -- (1.5, 2.5);
\draw [black, thick] (2.5, 1.5) -- (2.5, 2.5);
\draw [black, thick] (5.5, 1.5) -- (5.5, 2.5);
\draw [black, thick] (3.5, 2.5) -- (3.5, 3.5);
\draw [black, thick] (4.5, 2.5) -- (4.5, 3.5);
\draw [black, thick] (0.5, 3.5) -- (0.5, 4.5);
\draw [black, thick] (5.5, 3.5) -- (5.5, 4.5);
\draw [black, thick] (0.5, 4.5) -- (0.5, 5.5);
\draw [black, thick] (1.5, 4.5) -- (1.5, 5.5);
\draw [black, thick] (2.5, 4.5) -- (2.5, 5.5);
\draw [black, thick] (3.5, 4.5) -- (3.5, 5.5);
\draw [black, thick] (4.5, 4.5) -- (4.5, 5.5);
\draw [black, thick] (5.5, 4.5) -- (5.5, 5.5);

        \end{tikzpicture}
        \caption{$8$ straights in $G(6,6)$. This is the case $m-n\equiv0\pmod6$, where the upper bound satisfies $h(m,n)=g(m,n)+2$. In this case, even though $m\not > n$, extending this provides valid constructions.}
        \label{6x6ub}
    \end{figure}
    
    \begin{figure}[H]
        \centering
        \begin{tikzpicture}
\draw [red, very thick] (5, 0) -- (5,6);

\draw[step=1cm,gray,thin,dashed] (0,0) grid (7,6);
\draw [black, thin] (0, 0) -- (7, 0) -- (7, 6) -- (0, 6) -- cycle;
\draw [gray, thin] (1.1, 0.1) -- (1.9, 0.9);
\draw [gray, thin] (1.9, 0.1) -- (1.1, 0.9);
\draw [gray, thin] (6.1, 1.1) -- (6.9, 1.9);
\draw [gray, thin] (6.9, 1.1) -- (6.1, 1.9);
\draw [gray, thin] (3.1, 2.1) -- (3.9, 2.9);
\draw [gray, thin] (3.9, 2.1) -- (3.1, 2.9);
\draw [gray, thin] (0.1, 3.1) -- (0.9, 3.9);
\draw [gray, thin] (0.9, 3.1) -- (0.1, 3.9);
\draw [gray, thin] (0.1, 4.1) -- (0.9, 4.9);
\draw [gray, thin] (0.9, 4.1) -- (0.1, 4.9);
\draw [gray, thin] (3.1, 4.1) -- (3.9, 4.9);
\draw [gray, thin] (3.9, 4.1) -- (3.1, 4.9);
\draw [gray, thin] (6.1, 4.1) -- (6.9, 4.9);
\draw [gray, thin] (6.9, 4.1) -- (6.1, 4.9);
\draw [gray, thin] (1.1, 5.1) -- (1.9, 5.9);
\draw [gray, thin] (1.9, 5.1) -- (1.1, 5.9);
\draw [black, thick] (0.5, 0.5) -- (1.5, 0.5);
\draw [black, thick] (1.5, 0.5) -- (2.5, 0.5);
\draw [black, thick] (3.5, 0.5) -- (4.5, 0.5);
\draw [black, thick] (5.5, 0.5) -- (6.5, 0.5);
\draw [black, thick] (0.5, 1.5) -- (1.5, 1.5);
\draw [black, thick] (2.5, 1.5) -- (3.5, 1.5);
\draw [black, thick] (4.5, 1.5) -- (5.5, 1.5);
\draw [black, thick] (0.5, 2.5) -- (1.5, 2.5);
\draw [black, thick] (2.5, 2.5) -- (3.5, 2.5);
\draw [black, thick] (3.5, 2.5) -- (4.5, 2.5);
\draw [black, thick] (5.5, 2.5) -- (6.5, 2.5);
\draw [black, thick] (1.5, 3.5) -- (2.5, 3.5);
\draw [black, thick] (3.5, 3.5) -- (4.5, 3.5);
\draw [black, thick] (5.5, 3.5) -- (6.5, 3.5);
\draw [black, thick] (1.5, 4.5) -- (2.5, 4.5);
\draw [black, thick] (4.5, 4.5) -- (5.5, 4.5);
\draw [black, thick] (0.5, 5.5) -- (1.5, 5.5);
\draw [black, thick] (1.5, 5.5) -- (2.5, 5.5);
\draw [black, thick] (3.5, 5.5) -- (4.5, 5.5);
\draw [black, thick] (5.5, 5.5) -- (6.5, 5.5);
\draw [black, thick] (0.5, 0.5) -- (0.5, 1.5);
\draw [black, thick] (2.5, 0.5) -- (2.5, 1.5);
\draw [black, thick] (3.5, 0.5) -- (3.5, 1.5);
\draw [black, thick] (4.5, 0.5) -- (4.5, 1.5);
\draw [black, thick] (5.5, 0.5) -- (5.5, 1.5);
\draw [black, thick] (6.5, 0.5) -- (6.5, 1.5);
\draw [black, thick] (1.5, 1.5) -- (1.5, 2.5);
\draw [black, thick] (6.5, 1.5) -- (6.5, 2.5);
\draw [black, thick] (0.5, 2.5) -- (0.5, 3.5);
\draw [black, thick] (2.5, 2.5) -- (2.5, 3.5);
\draw [black, thick] (4.5, 2.5) -- (4.5, 3.5);
\draw [black, thick] (5.5, 2.5) -- (5.5, 3.5);
\draw [black, thick] (0.5, 3.5) -- (0.5, 4.5);
\draw [black, thick] (1.5, 3.5) -- (1.5, 4.5);
\draw [black, thick] (3.5, 3.5) -- (3.5, 4.5);
\draw [black, thick] (6.5, 3.5) -- (6.5, 4.5);
\draw [black, thick] (0.5, 4.5) -- (0.5, 5.5);
\draw [black, thick] (2.5, 4.5) -- (2.5, 5.5);
\draw [black, thick] (3.5, 4.5) -- (3.5, 5.5);
\draw [black, thick] (4.5, 4.5) -- (4.5, 5.5);
\draw [black, thick] (5.5, 4.5) -- (5.5, 5.5);
\draw [black, thick] (6.5, 4.5) -- (6.5, 5.5);

        \end{tikzpicture}
        \caption{$8$ straights in $G(7,6)$. This is the case $m-n\equiv1\pmod6$, where the upper bound satisfies $h(m,n)=g(m,n)+2$.}
        \label{7x6ub}
    \end{figure}
    
    \begin{figure}[H]
        \centering
        \begin{tikzpicture}
\draw [red, very thick] (6, 0) -- (6,6);

\draw[step=1cm,gray,thin,dashed] (0,0) grid (8,6);
\draw [black, thin] (0, 0) -- (8, 0) -- (8, 6) -- (0, 6) -- cycle;
\draw [gray, thin] (0.1, 1.1) -- (0.9, 1.9);
\draw [gray, thin] (0.9, 1.1) -- (0.1, 1.9);
\draw [gray, thin] (7.1, 1.1) -- (7.9, 1.9);
\draw [gray, thin] (7.9, 1.1) -- (7.1, 1.9);
\draw [gray, thin] (3.1, 2.1) -- (3.9, 2.9);
\draw [gray, thin] (3.9, 2.1) -- (3.1, 2.9);
\draw [gray, thin] (4.1, 2.1) -- (4.9, 2.9);
\draw [gray, thin] (4.9, 2.1) -- (4.1, 2.9);
\draw [gray, thin] (0.1, 4.1) -- (0.9, 4.9);
\draw [gray, thin] (0.9, 4.1) -- (0.1, 4.9);
\draw [gray, thin] (3.1, 4.1) -- (3.9, 4.9);
\draw [gray, thin] (3.9, 4.1) -- (3.1, 4.9);
\draw [gray, thin] (4.1, 4.1) -- (4.9, 4.9);
\draw [gray, thin] (4.9, 4.1) -- (4.1, 4.9);
\draw [gray, thin] (7.1, 4.1) -- (7.9, 4.9);
\draw [gray, thin] (7.9, 4.1) -- (7.1, 4.9);
\draw [black, thick] (0.5, 0.5) -- (1.5, 0.5);
\draw [black, thick] (2.5, 0.5) -- (3.5, 0.5);
\draw [black, thick] (4.5, 0.5) -- (5.5, 0.5);
\draw [black, thick] (6.5, 0.5) -- (7.5, 0.5);
\draw [black, thick] (1.5, 1.5) -- (2.5, 1.5);
\draw [black, thick] (3.5, 1.5) -- (4.5, 1.5);
\draw [black, thick] (5.5, 1.5) -- (6.5, 1.5);
\draw [black, thick] (0.5, 2.5) -- (1.5, 2.5);
\draw [black, thick] (2.5, 2.5) -- (3.5, 2.5);
\draw [black, thick] (3.5, 2.5) -- (4.5, 2.5);
\draw [black, thick] (4.5, 2.5) -- (5.5, 2.5);
\draw [black, thick] (6.5, 2.5) -- (7.5, 2.5);
\draw [black, thick] (0.5, 3.5) -- (1.5, 3.5);
\draw [black, thick] (2.5, 3.5) -- (3.5, 3.5);
\draw [black, thick] (4.5, 3.5) -- (5.5, 3.5);
\draw [black, thick] (6.5, 3.5) -- (7.5, 3.5);
\draw [black, thick] (1.5, 4.5) -- (2.5, 4.5);
\draw [black, thick] (5.5, 4.5) -- (6.5, 4.5);
\draw [black, thick] (0.5, 5.5) -- (1.5, 5.5);
\draw [black, thick] (2.5, 5.5) -- (3.5, 5.5);
\draw [black, thick] (4.5, 5.5) -- (5.5, 5.5);
\draw [black, thick] (6.5, 5.5) -- (7.5, 5.5);
\draw [black, thick] (0.5, 0.5) -- (0.5, 1.5);
\draw [black, thick] (1.5, 0.5) -- (1.5, 1.5);
\draw [black, thick] (2.5, 0.5) -- (2.5, 1.5);
\draw [black, thick] (3.5, 0.5) -- (3.5, 1.5);
\draw [black, thick] (4.5, 0.5) -- (4.5, 1.5);
\draw [black, thick] (5.5, 0.5) -- (5.5, 1.5);
\draw [black, thick] (6.5, 0.5) -- (6.5, 1.5);
\draw [black, thick] (7.5, 0.5) -- (7.5, 1.5);
\draw [black, thick] (0.5, 1.5) -- (0.5, 2.5);
\draw [black, thick] (7.5, 1.5) -- (7.5, 2.5);
\draw [black, thick] (1.5, 2.5) -- (1.5, 3.5);
\draw [black, thick] (2.5, 2.5) -- (2.5, 3.5);
\draw [black, thick] (5.5, 2.5) -- (5.5, 3.5);
\draw [black, thick] (6.5, 2.5) -- (6.5, 3.5);
\draw [black, thick] (0.5, 3.5) -- (0.5, 4.5);
\draw [black, thick] (3.5, 3.5) -- (3.5, 4.5);
\draw [black, thick] (4.5, 3.5) -- (4.5, 4.5);
\draw [black, thick] (7.5, 3.5) -- (7.5, 4.5);
\draw [black, thick] (0.5, 4.5) -- (0.5, 5.5);
\draw [black, thick] (1.5, 4.5) -- (1.5, 5.5);
\draw [black, thick] (2.5, 4.5) -- (2.5, 5.5);
\draw [black, thick] (3.5, 4.5) -- (3.5, 5.5);
\draw [black, thick] (4.5, 4.5) -- (4.5, 5.5);
\draw [black, thick] (5.5, 4.5) -- (5.5, 5.5);
\draw [black, thick] (6.5, 4.5) -- (6.5, 5.5);
\draw [black, thick] (7.5, 4.5) -- (7.5, 5.5);

        \end{tikzpicture}
        \caption{$8$ straights in $G(8,6)$. This is the case $m-n\equiv2\pmod6$, where the upper bound satisfies $h(m,n)=g(m,n)$.}
        \label{8x6ub}
    \end{figure}
    
    \begin{figure}[H]
        \centering
        \begin{tikzpicture}

\draw [red, very thick] (7, 0) -- (7,6);

\draw[step=1cm,gray,thin,dashed] (0,0) grid (9,6);
\draw [black, thin] (0, 0) -- (9, 0) -- (9, 6) -- (0, 6) -- cycle;
\draw [gray, thin] (1.1, 0.1) -- (1.9, 0.9);
\draw [gray, thin] (1.9, 0.1) -- (1.1, 0.9);
\draw [gray, thin] (8.1, 1.1) -- (8.9, 1.9);
\draw [gray, thin] (8.9, 1.1) -- (8.1, 1.9);
\draw [gray, thin] (5.1, 2.1) -- (5.9, 2.9);
\draw [gray, thin] (5.9, 2.1) -- (5.1, 2.9);
\draw [gray, thin] (2.1, 3.1) -- (2.9, 3.9);
\draw [gray, thin] (2.9, 3.1) -- (2.1, 3.9);
\draw [gray, thin] (8.1, 4.1) -- (8.9, 4.9);
\draw [gray, thin] (8.9, 4.1) -- (8.1, 4.9);
\draw [gray, thin] (1.1, 5.1) -- (1.9, 5.9);
\draw [gray, thin] (1.9, 5.1) -- (1.1, 5.9);
\draw [gray, thin] (2.1, 5.1) -- (2.9, 5.9);
\draw [gray, thin] (2.9, 5.1) -- (2.1, 5.9);
\draw [gray, thin] (5.1, 5.1) -- (5.9, 5.9);
\draw [gray, thin] (5.9, 5.1) -- (5.1, 5.9);
\draw [black, thick] (0.5, 0.5) -- (1.5, 0.5);
\draw [black, thick] (1.5, 0.5) -- (2.5, 0.5);
\draw [black, thick] (3.5, 0.5) -- (4.5, 0.5);
\draw [black, thick] (5.5, 0.5) -- (6.5, 0.5);
\draw [black, thick] (7.5, 0.5) -- (8.5, 0.5);
\draw [black, thick] (0.5, 1.5) -- (1.5, 1.5);
\draw [black, thick] (2.5, 1.5) -- (3.5, 1.5);
\draw [black, thick] (4.5, 1.5) -- (5.5, 1.5);
\draw [black, thick] (6.5, 1.5) -- (7.5, 1.5);
\draw [black, thick] (0.5, 2.5) -- (1.5, 2.5);
\draw [black, thick] (2.5, 2.5) -- (3.5, 2.5);
\draw [black, thick] (4.5, 2.5) -- (5.5, 2.5);
\draw [black, thick] (5.5, 2.5) -- (6.5, 2.5);
\draw [black, thick] (7.5, 2.5) -- (8.5, 2.5);
\draw [black, thick] (0.5, 3.5) -- (1.5, 3.5);
\draw [black, thick] (3.5, 3.5) -- (4.5, 3.5);
\draw [black, thick] (5.5, 3.5) -- (6.5, 3.5);
\draw [black, thick] (7.5, 3.5) -- (8.5, 3.5);
\draw [black, thick] (0.5, 4.5) -- (1.5, 4.5);
\draw [black, thick] (2.5, 4.5) -- (3.5, 4.5);
\draw [black, thick] (4.5, 4.5) -- (5.5, 4.5);
\draw [black, thick] (6.5, 4.5) -- (7.5, 4.5);
\draw [black, thick] (0.5, 5.5) -- (1.5, 5.5);
\draw [black, thick] (1.5, 5.5) -- (2.5, 5.5);
\draw [black, thick] (2.5, 5.5) -- (3.5, 5.5);
\draw [black, thick] (4.5, 5.5) -- (5.5, 5.5);
\draw [black, thick] (5.5, 5.5) -- (6.5, 5.5);
\draw [black, thick] (7.5, 5.5) -- (8.5, 5.5);
\draw [black, thick] (0.5, 0.5) -- (0.5, 1.5);
\draw [black, thick] (2.5, 0.5) -- (2.5, 1.5);
\draw [black, thick] (3.5, 0.5) -- (3.5, 1.5);
\draw [black, thick] (4.5, 0.5) -- (4.5, 1.5);
\draw [black, thick] (5.5, 0.5) -- (5.5, 1.5);
\draw [black, thick] (6.5, 0.5) -- (6.5, 1.5);
\draw [black, thick] (7.5, 0.5) -- (7.5, 1.5);
\draw [black, thick] (8.5, 0.5) -- (8.5, 1.5);
\draw [black, thick] (1.5, 1.5) -- (1.5, 2.5);
\draw [black, thick] (8.5, 1.5) -- (8.5, 2.5);
\draw [black, thick] (0.5, 2.5) -- (0.5, 3.5);
\draw [black, thick] (2.5, 2.5) -- (2.5, 3.5);
\draw [black, thick] (3.5, 2.5) -- (3.5, 3.5);
\draw [black, thick] (4.5, 2.5) -- (4.5, 3.5);
\draw [black, thick] (6.5, 2.5) -- (6.5, 3.5);
\draw [black, thick] (7.5, 2.5) -- (7.5, 3.5);
\draw [black, thick] (1.5, 3.5) -- (1.5, 4.5);
\draw [black, thick] (2.5, 3.5) -- (2.5, 4.5);
\draw [black, thick] (5.5, 3.5) -- (5.5, 4.5);
\draw [black, thick] (8.5, 3.5) -- (8.5, 4.5);
\draw [black, thick] (0.5, 4.5) -- (0.5, 5.5);
\draw [black, thick] (3.5, 4.5) -- (3.5, 5.5);
\draw [black, thick] (4.5, 4.5) -- (4.5, 5.5);
\draw [black, thick] (6.5, 4.5) -- (6.5, 5.5);
\draw [black, thick] (7.5, 4.5) -- (7.5, 5.5);
\draw [black, thick] (8.5, 4.5) -- (8.5, 5.5);

        \end{tikzpicture}
        \caption{$10$ straights in $G(9,6)$. This is the case $m-n\equiv3\pmod6$, where the upper bound satisfies $h(m,n)=g(m,n)+2$.}
        \label{9x6ub}
    \end{figure}
    
    \begin{figure}[H]
        \centering
        \begin{tikzpicture}
\draw [red, very thick] (8, 0) -- (8, 6);

\draw[step=1cm,gray,thin,dashed] (0,0) grid (10,6);
\draw [black, thin] (0, 0) -- (10, 0) -- (10, 6) -- (0, 6) -- cycle;
\draw [gray, thin] (0.1, 1.1) -- (0.9, 1.9);
\draw [gray, thin] (0.9, 1.1) -- (0.1, 1.9);
\draw [gray, thin] (9.1, 1.1) -- (9.9, 1.9);
\draw [gray, thin] (9.9, 1.1) -- (9.1, 1.9);
\draw [gray, thin] (3.1, 2.1) -- (3.9, 2.9);
\draw [gray, thin] (3.9, 2.1) -- (3.1, 2.9);
\draw [gray, thin] (6.1, 2.1) -- (6.9, 2.9);
\draw [gray, thin] (6.9, 2.1) -- (6.1, 2.9);
\draw [gray, thin] (0.1, 4.1) -- (0.9, 4.9);
\draw [gray, thin] (0.9, 4.1) -- (0.1, 4.9);
\draw [gray, thin] (3.1, 4.1) -- (3.9, 4.9);
\draw [gray, thin] (3.9, 4.1) -- (3.1, 4.9);
\draw [gray, thin] (5.1, 4.1) -- (5.9, 4.9);
\draw [gray, thin] (5.9, 4.1) -- (5.1, 4.9);
\draw [gray, thin] (9.1, 4.1) -- (9.9, 4.9);
\draw [gray, thin] (9.9, 4.1) -- (9.1, 4.9);
\draw [gray, thin] (5.1, 5.1) -- (5.9, 5.9);
\draw [gray, thin] (5.9, 5.1) -- (5.1, 5.9);
\draw [gray, thin] (6.1, 5.1) -- (6.9, 5.9);
\draw [gray, thin] (6.9, 5.1) -- (6.1, 5.9);
\draw [black, thick] (0.5, 0.5) -- (1.5, 0.5);
\draw [black, thick] (2.5, 0.5) -- (3.5, 0.5);
\draw [black, thick] (4.5, 0.5) -- (5.5, 0.5);
\draw [black, thick] (6.5, 0.5) -- (7.5, 0.5);
\draw [black, thick] (8.5, 0.5) -- (9.5, 0.5);
\draw [black, thick] (1.5, 1.5) -- (2.5, 1.5);
\draw [black, thick] (3.5, 1.5) -- (4.5, 1.5);
\draw [black, thick] (5.5, 1.5) -- (6.5, 1.5);
\draw [black, thick] (7.5, 1.5) -- (8.5, 1.5);
\draw [black, thick] (0.5, 2.5) -- (1.5, 2.5);
\draw [black, thick] (2.5, 2.5) -- (3.5, 2.5);
\draw [black, thick] (3.5, 2.5) -- (4.5, 2.5);
\draw [black, thick] (5.5, 2.5) -- (6.5, 2.5);
\draw [black, thick] (6.5, 2.5) -- (7.5, 2.5);
\draw [black, thick] (8.5, 2.5) -- (9.5, 2.5);
\draw [black, thick] (0.5, 3.5) -- (1.5, 3.5);
\draw [black, thick] (2.5, 3.5) -- (3.5, 3.5);
\draw [black, thick] (4.5, 3.5) -- (5.5, 3.5);
\draw [black, thick] (6.5, 3.5) -- (7.5, 3.5);
\draw [black, thick] (8.5, 3.5) -- (9.5, 3.5);
\draw [black, thick] (1.5, 4.5) -- (2.5, 4.5);
\draw [black, thick] (4.5, 4.5) -- (5.5, 4.5);
\draw [black, thick] (5.5, 4.5) -- (6.5, 4.5);
\draw [black, thick] (7.5, 4.5) -- (8.5, 4.5);
\draw [black, thick] (0.5, 5.5) -- (1.5, 5.5);
\draw [black, thick] (2.5, 5.5) -- (3.5, 5.5);
\draw [black, thick] (4.5, 5.5) -- (5.5, 5.5);
\draw [black, thick] (5.5, 5.5) -- (6.5, 5.5);
\draw [black, thick] (6.5, 5.5) -- (7.5, 5.5);
\draw [black, thick] (8.5, 5.5) -- (9.5, 5.5);
\draw [black, thick] (0.5, 0.5) -- (0.5, 1.5);
\draw [black, thick] (1.5, 0.5) -- (1.5, 1.5);
\draw [black, thick] (2.5, 0.5) -- (2.5, 1.5);
\draw [black, thick] (3.5, 0.5) -- (3.5, 1.5);
\draw [black, thick] (4.5, 0.5) -- (4.5, 1.5);
\draw [black, thick] (5.5, 0.5) -- (5.5, 1.5);
\draw [black, thick] (6.5, 0.5) -- (6.5, 1.5);
\draw [black, thick] (7.5, 0.5) -- (7.5, 1.5);
\draw [black, thick] (8.5, 0.5) -- (8.5, 1.5);
\draw [black, thick] (9.5, 0.5) -- (9.5, 1.5);
\draw [black, thick] (0.5, 1.5) -- (0.5, 2.5);
\draw [black, thick] (9.5, 1.5) -- (9.5, 2.5);
\draw [black, thick] (1.5, 2.5) -- (1.5, 3.5);
\draw [black, thick] (2.5, 2.5) -- (2.5, 3.5);
\draw [black, thick] (4.5, 2.5) -- (4.5, 3.5);
\draw [black, thick] (5.5, 2.5) -- (5.5, 3.5);
\draw [black, thick] (7.5, 2.5) -- (7.5, 3.5);
\draw [black, thick] (8.5, 2.5) -- (8.5, 3.5);
\draw [black, thick] (0.5, 3.5) -- (0.5, 4.5);
\draw [black, thick] (3.5, 3.5) -- (3.5, 4.5);
\draw [black, thick] (6.5, 3.5) -- (6.5, 4.5);
\draw [black, thick] (9.5, 3.5) -- (9.5, 4.5);
\draw [black, thick] (0.5, 4.5) -- (0.5, 5.5);
\draw [black, thick] (1.5, 4.5) -- (1.5, 5.5);
\draw [black, thick] (2.5, 4.5) -- (2.5, 5.5);
\draw [black, thick] (3.5, 4.5) -- (3.5, 5.5);
\draw [black, thick] (4.5, 4.5) -- (4.5, 5.5);
\draw [black, thick] (7.5, 4.5) -- (7.5, 5.5);
\draw [black, thick] (8.5, 4.5) -- (8.5, 5.5);
\draw [black, thick] (9.5, 4.5) -- (9.5, 5.5);

        \end{tikzpicture}
        \caption{$10$ straights in $G(10,6)$. This is the case $m-n\equiv4\pmod6$, where the upper bound satisfies $h(m,n)=g(m,n)+2$.}
        \label{10x6ub}
    \end{figure}
    
    \begin{figure}[H]
        \centering
        \begin{tikzpicture}
\draw [red, very thick] (9, 0) -- (9, 6);

\draw[step=1cm,gray,thin,dashed] (0,0) grid (11,6);
\draw [black, thin] (0, 0) -- (11, 0) -- (11, 6) -- (0, 6) -- cycle;
\draw [gray, thin] (1.1, 0.1) -- (1.9, 0.9);
\draw [gray, thin] (1.9, 0.1) -- (1.1, 0.9);
\draw [gray, thin] (2.1, 0.1) -- (2.9, 0.9);
\draw [gray, thin] (2.9, 0.1) -- (2.1, 0.9);
\draw [gray, thin] (3.1, 0.1) -- (3.9, 0.9);
\draw [gray, thin] (3.9, 0.1) -- (3.1, 0.9);
\draw [gray, thin] (10.1, 1.1) -- (10.9, 1.9);
\draw [gray, thin] (10.9, 1.1) -- (10.1, 1.9);
\draw [gray, thin] (2.1, 2.1) -- (2.9, 2.9);
\draw [gray, thin] (2.9, 2.1) -- (2.1, 2.9);
\draw [gray, thin] (4.1, 2.1) -- (4.9, 2.9);
\draw [gray, thin] (4.9, 2.1) -- (4.1, 2.9);
\draw [gray, thin] (7.1, 2.1) -- (7.9, 2.9);
\draw [gray, thin] (7.9, 2.1) -- (7.1, 2.9);
\draw [gray, thin] (3.1, 3.1) -- (3.9, 3.9);
\draw [gray, thin] (3.9, 3.1) -- (3.1, 3.9);
\draw [gray, thin] (10.1, 4.1) -- (10.9, 4.9);
\draw [gray, thin] (10.9, 4.1) -- (10.1, 4.9);
\draw [gray, thin] (1.1, 5.1) -- (1.9, 5.9);
\draw [gray, thin] (1.9, 5.1) -- (1.1, 5.9);
\draw [gray, thin] (4.1, 5.1) -- (4.9, 5.9);
\draw [gray, thin] (4.9, 5.1) -- (4.1, 5.9);
\draw [gray, thin] (7.1, 5.1) -- (7.9, 5.9);
\draw [gray, thin] (7.9, 5.1) -- (7.1, 5.9);
\draw [black, thick] (0.5, 0.5) -- (1.5, 0.5);
\draw [black, thick] (1.5, 0.5) -- (2.5, 0.5);
\draw [black, thick] (2.5, 0.5) -- (3.5, 0.5);
\draw [black, thick] (3.5, 0.5) -- (4.5, 0.5);
\draw [black, thick] (5.5, 0.5) -- (6.5, 0.5);
\draw [black, thick] (7.5, 0.5) -- (8.5, 0.5);
\draw [black, thick] (9.5, 0.5) -- (10.5, 0.5);
\draw [black, thick] (0.5, 1.5) -- (1.5, 1.5);
\draw [black, thick] (2.5, 1.5) -- (3.5, 1.5);
\draw [black, thick] (4.5, 1.5) -- (5.5, 1.5);
\draw [black, thick] (6.5, 1.5) -- (7.5, 1.5);
\draw [black, thick] (8.5, 1.5) -- (9.5, 1.5);
\draw [black, thick] (0.5, 2.5) -- (1.5, 2.5);
\draw [black, thick] (3.5, 2.5) -- (4.5, 2.5);
\draw [black, thick] (4.5, 2.5) -- (5.5, 2.5);
\draw [black, thick] (6.5, 2.5) -- (7.5, 2.5);
\draw [black, thick] (7.5, 2.5) -- (8.5, 2.5);
\draw [black, thick] (9.5, 2.5) -- (10.5, 2.5);
\draw [black, thick] (0.5, 3.5) -- (1.5, 3.5);
\draw [black, thick] (2.5, 3.5) -- (3.5, 3.5);
\draw [black, thick] (3.5, 3.5) -- (4.5, 3.5);
\draw [black, thick] (5.5, 3.5) -- (6.5, 3.5);
\draw [black, thick] (7.5, 3.5) -- (8.5, 3.5);
\draw [black, thick] (9.5, 3.5) -- (10.5, 3.5);
\draw [black, thick] (0.5, 4.5) -- (1.5, 4.5);
\draw [black, thick] (2.5, 4.5) -- (3.5, 4.5);
\draw [black, thick] (4.5, 4.5) -- (5.5, 4.5);
\draw [black, thick] (6.5, 4.5) -- (7.5, 4.5);
\draw [black, thick] (8.5, 4.5) -- (9.5, 4.5);
\draw [black, thick] (0.5, 5.5) -- (1.5, 5.5);
\draw [black, thick] (1.5, 5.5) -- (2.5, 5.5);
\draw [black, thick] (3.5, 5.5) -- (4.5, 5.5);
\draw [black, thick] (4.5, 5.5) -- (5.5, 5.5);
\draw [black, thick] (6.5, 5.5) -- (7.5, 5.5);
\draw [black, thick] (7.5, 5.5) -- (8.5, 5.5);
\draw [black, thick] (9.5, 5.5) -- (10.5, 5.5);
\draw [black, thick] (0.5, 0.5) -- (0.5, 1.5);
\draw [black, thick] (4.5, 0.5) -- (4.5, 1.5);
\draw [black, thick] (5.5, 0.5) -- (5.5, 1.5);
\draw [black, thick] (6.5, 0.5) -- (6.5, 1.5);
\draw [black, thick] (7.5, 0.5) -- (7.5, 1.5);
\draw [black, thick] (8.5, 0.5) -- (8.5, 1.5);
\draw [black, thick] (9.5, 0.5) -- (9.5, 1.5);
\draw [black, thick] (10.5, 0.5) -- (10.5, 1.5);
\draw [black, thick] (1.5, 1.5) -- (1.5, 2.5);
\draw [black, thick] (2.5, 1.5) -- (2.5, 2.5);
\draw [black, thick] (3.5, 1.5) -- (3.5, 2.5);
\draw [black, thick] (10.5, 1.5) -- (10.5, 2.5);
\draw [black, thick] (0.5, 2.5) -- (0.5, 3.5);
\draw [black, thick] (2.5, 2.5) -- (2.5, 3.5);
\draw [black, thick] (5.5, 2.5) -- (5.5, 3.5);
\draw [black, thick] (6.5, 2.5) -- (6.5, 3.5);
\draw [black, thick] (8.5, 2.5) -- (8.5, 3.5);
\draw [black, thick] (9.5, 2.5) -- (9.5, 3.5);
\draw [black, thick] (1.5, 3.5) -- (1.5, 4.5);
\draw [black, thick] (4.5, 3.5) -- (4.5, 4.5);
\draw [black, thick] (7.5, 3.5) -- (7.5, 4.5);
\draw [black, thick] (10.5, 3.5) -- (10.5, 4.5);
\draw [black, thick] (0.5, 4.5) -- (0.5, 5.5);
\draw [black, thick] (2.5, 4.5) -- (2.5, 5.5);
\draw [black, thick] (3.5, 4.5) -- (3.5, 5.5);
\draw [black, thick] (5.5, 4.5) -- (5.5, 5.5);
\draw [black, thick] (6.5, 4.5) -- (6.5, 5.5);
\draw [black, thick] (8.5, 4.5) -- (8.5, 5.5);
\draw [black, thick] (9.5, 4.5) -- (9.5, 5.5);
\draw [black, thick] (10.5, 4.5) -- (10.5, 5.5);

        \end{tikzpicture}
        \caption{$12$ straights in $G(11,6)$. This is the case $m-n\equiv5\pmod6$, where the upper bound satisfies $h(m,n)=g(m,n)+2$.}
        \label{11x6ub}
    \end{figure}

As can be seen above, the upper bound is not always equal to the lower bound; there are several cases where tight constructions have been found that do not follow from above. Constructions for $G(6a,6)$, $G(6a+5,6)$, and $G(6a+4,10)$ that attain the lower bound exist, by exhibiting unfoldable (but not extensible) constructions for $G(10,6)$, $G(11,6)$, and $G(16,10)$ respectively. Furthermore, it is possible to prove via considering connectedness that the lower bound given can be improved by two in the case of $m = n+4$.

If the true maximum number of turns is $f_{\max}(m,n)$, then the difference between this number and the upper bound is $$f_d(m,n)=h(m,n)-f_{\max}(m,n),$$
satisfying $f_d(m,n) \in \{0,2\}$.

This result, along with Claim \ref{max4|n}, completes the proof of Theorem \ref{final result}.

\section{Conclusion}
The results in this paper should also be applicable to grids bounded by shapes other than rectangles; the key lemmas in the introduction and the half-form argument can still be used in other subsets of $\mathbb{Z}^2$. Constructions for the maximum case given for rectangles in subsection \ref{2mod4} are either tight or within $2$ of a tight bound; resolving which of the two holds in the remaining cases is left open, though the answer seems somewhat erratic.

\begin{question}
For which $n \equiv 2 \pmod 4$ and $m > n$ does $f_d(m,n) = 2$
hold?
\end{question}

We conjecture that for sufficiently large rectangles (formally, $m,m-n>C$ for some constant $C$) that if $m-n\equiv m'-n'\pmod6$, that $f_d(m,n)=f_d(m',n'),$ that is, that the answer is ``eventually periodic modulo $6$".

\begin{question}
For which triples of positive integers $k,m,n$ do there exist a Hamilton cycle in $G(m,n)$ with exactly $k$ turns?
\end{question}

We conjecture that for all $m\ge n$, and any even $k$ between the minimum and maximum possible number of turns on $G(m,n)$, a cycle with $k$ turns exists, unless $n=4$ and $k=f_{\max}(m,n)-2$, in which case it can be proved to be unattainable. Extensibility and half-forms will perhaps be helpful tools in finding constructions.

\begin{acknowledgement}
Many thanks to Imre Leader for providing feedback and suggestions, and to Nikolai Beluhov for pointing us towards some references on previous work on the problem.
\end{acknowledgement}

\printbibliography

\end{document}